\newtheorem{theorem}{Theorem}[section]
\newtheorem{lemma}[theorem]{Lemma}
\newtheorem{lem}[theorem]{Lemma}
\newtheorem{proposition}[theorem]{Proposition}
\newtheorem{prop}[theorem]{Proposition}
\newtheorem{corollary}[theorem]{Corollary}
\newtheorem{cor}[theorem]{Corollary}
\newtheorem{thm}[theorem]{Theorem}
\theoremstyle{definition}
\newtheorem{nota}[theorem]{Notation}
\newtheorem{definition}[theorem]{Definition}
\newtheorem{example}[theorem]{Example}
\newtheorem{ex}[theorem]{Example}
\newtheorem{discussion}[theorem]{Discussion}
\newtheorem{remark}[theorem]{Remark}
\newtheorem{rem}[theorem]{Remark}
\newtheorem{rems}[theorem]{Remarks}
\newtheorem{conj}[theorem]{Conjecture}
\newtheorem{question}[theorem]{Question}
\newtheorem{nota/rem}[theorem]{Notation and Remarks}
\newtheorem{Setting}[theorem]{Setting}
\numberwithin{equation}{section}
\newtheorem{remark/questions}[theorem]{Remark and Questions}
\newtheorem{fact}[theorem]{Fact}
\newtheorem{facts}[theorem]{Facts}
\newtheorem{remarks}[theorem]{Remarks}
 \long\def\alert#1{\smallskip\line{\hskip\parindent\vrule
\vbox{\advance\hsize-2\parindent\hrule\smallskip\parindent.4\parindent
  \narrower\noindent#1\smallskip\hrule}\vrule\hfill}\smallskip}
\newtheorem{definitionsremarks}[theorem]{Definitions and Remarks}
\newcommand{\m}{\mathfrak{m}}
\newcommand{\fm}{\mathfrak{m}}
\newcommand{\p}{\mathfrak{p}}
\DeclareMathOperator{\pd}{pd}
\DeclareMathOperator{\id}{id}
\DeclareMathOperator{\Tor}{Tor}
\DeclareMathOperator{\tor}{Tor}
\DeclareMathOperator{\Hom}{Hom}
\DeclareMathOperator{\Ext}{Ext}
\DeclareMathOperator{\Extgap}{Ext-gap}
\DeclareMathOperator{\ext}{Ext}
\DeclareMathOperator\Ass{Ass}
\DeclareMathOperator{\Max}{Max}
\DeclareMathOperator{\depth}{depth}
\DeclareMathOperator{\D}{D}
\DeclareMathOperator{\coker}{coker}
\def\dim{\mathop{\rm dim}}
\numberwithin{equation}{section}
\newenvironment{dedication}
        {\vspace{6ex}\begin{quotation}\begin{center}\begin{em}}
        {\par\end{em}\end{center}\end{quotation}}
\begin{document}

\title[AR \& HW Conj over quasi-fiber product rings]
{Auslander-Reiten and Huneke-Wiegand conjectures over quasi-fiber product rings}

\author{T. H. Freitas}
\address{Universidade Tecnol\'ogica Federal do Paran\'a, 85053-525, Guarapuava-PR, Brazil}
\email{freitas.thf@gmail.com}

\author{V. H. Jorge P\'erez}
\address{Universidade de S{\~a}o Paulo -
ICMC, Caixa Postal 668, 13560-970, S{\~a}o Carlos-SP, Brazil}
\email{vhjperez@icmc.usp.br}

\author{R. Wiegand}
\address{University of Nebraska-Lincoln}
\email{rwiegand@unl.edu }

\author{S. Wiegand}
\address{University of Nebraska-Lincoln}

\thanks{All four authors were partially supported by FAPESP-Brazil  2018/05271-6,
2018/05268-5 and CNPq-Brazil 421440/2016-3.
RW was partially supported by Simons Collaboration Grant 426885.}

\keywords{Auslander-Reiten Conjecture, Huneke-Wiegand Conjecture, vanishing of Ext, fiber product rings, Tor-rigid modules}
\subjclass[2010]{ 13D07, 13H10, 13C15.}

\begin{abstract}
 In this paper we explore consequences of the vanishing of $\Ext$ for finitely generated modules over a quasi-fiber product ring $R$; that is, $R$ is  a 
 local ring 
 such that $R/(\underline x)$ is a non-trivial fiber product ring, for some regular sequence $\underline x$ of $R$. 
  Equivalently,
 the maximal ideal of $R/(\underline x)$ decomposes as a direct sum of two nonzero ideals. 
 Gorenstein quasi-fiber product rings
 are  AB-rings and are Ext-bounded.  We show in Theorem~\ref{thm:arcminimal} that quasi-fiber product rings 
 satisfy a sharpened form of the Auslander-Reiten Conjecture.  We also make some observations  related to the   Huneke-Wiegand conjecture
for quasi-fiber product rings. \end{abstract}

\maketitle

\vskip-40pt

\begin{dedication}
{This article is dedicated to the memory of Nicholas Baeth}
\end{dedication}

\section{Introduction}
This article is motivated
 by the celebrated  Auslander-Reiten Conjecture (ARC)
 and the Huneke-Wiegand Conjecture  for integral domains (HWC$_d$); see  \cite[p. 70]{AR}, \cite{HL}, and 
  \cite[pp. 473--474]{HW}: 
\begin{definition}\label{defarchwc} Let $R$ be a commutative Noetherian local ring.
\smallskip

\begin{enumerate}
 \item[] (ARC)\ \ {\bf  Auslander-Reiten Conjecture.}
 If $M$ is a  finitely generated $R$-module such that
 $\Ext^i_R(M,M\oplus R)=0$, for all $i\geq 1$, then $M$ is free.\\
     \item[] (HWC$_d$)\ \ {\bf Huneke-Wiegand Conjecture (for domains).} If $R$ is a Gorenstein local domain and 
     $M$ is a finitely generated torsion-free $R$-module $M$ such that
  $M\otimes_RM^\ast$ is
   reflexive, then $M$ is free.
\end{enumerate}
\end{definition}

\smallskip

\noindent Here $M^*$ denotes the algebraic dual of $M$, namely,  $\Hom_R(M,R)$.  Recall that
an $R$-module $M$ is t{\em orsion-free} provided every non-zerodivisor of $R$ is a non-zerodivisor on $M$.

Several positive cases for (ARC) are known; see,
for instance,   
work of Huneke, Leuschke,   Goto, Takahashi, Nasseh, Sather-Wagstaff, Christensen, Holm, Avramov, and Iyengar in  \cite{HL}, \cite{GT}, \cite{CeT}, \cite{NS},  \cite{CH} and \cite{AINS}. Huneke and R.~Wiegand \cite{HW} established  (HWC$_d$) over hypersurfaces
(see Remark \ref{hw-main}),  but  (HWC$_d$) is still open for Gorenstein domains, even if $M$ is assumed to be an ideal of the ring; see the article of Huneke, R.~Wiegand,
 and Iyengar  \cite{HIW} or Celikbas \cite{Ce1}. 


At the other extreme, we know of no counterexample to the following
general form of the conjecture:
\begin{enumerate}
\item[] (G-HWC$_d$)\ \ {\bf Huneke-Wiegand Conjecture (generalized, domain).}    
Let $R$ be a local domain,
and let $M$ be an $R$-module.  If $M\otimes_RM^*$ is maximal Cohen-Macaulay (henceforth abbreviated ``MCM''),
then $M$ is free.
\end{enumerate}
Any attempt to solve (G-HWC$_d$) is likely to involve knowing what properties (weaker than being free)  one can conclude 
about $M$ from the assumption that $M\otimes_RM^*$ is MCM.  For example, we might
conjecture that $M$ is forced to be torsion-free: 

\begin{enumerate}
\item[] (G-HWC$_{tf}$)\ \ Let $R$ be a local domain,
and let $M$ be an $R$-module.  If $M\otimes_RM^*$ is maximal Cohen-Macaulay,
then $M$ is torsion-free.
\end{enumerate}
For convenience we usually assume $M$ is torsion-free  for our discussion here, as in (HWC$_d$).
 (Assuming that $M$ is torsion-free in (HWC$_d$) avoids the trivial case where $M$ is torsion, and hence $M^*=0=M\otimes_RM^*$.)

Some partial results concerning these conjectures appear in articles by Huneke, Iyengar,
and Wiegand  \cite{HIW}; Celikbas \cite{Ce0}, \cite{Ce1};  Goto, Takahashi, Taniguchi and Truong \cite{GTTT};  
and Garcia-Sanchez and Leamer \cite{P}.  
By 
   a result from Celikbas and R. Wiegand \cite{CRW}, reproduced here as Proposition~\ref{ACT}. 
  the truth of (HWC$_d$) in the one-dimensional case would imply the general case.   
   (On the other hand, the truth of 
  a  Huneke-Wiegand conjecture for Gorenstein quasi-fiber product rings---the focus of this article---does {\it not} seem to 
   reduce to the one-dimensional case; see Remark~\ref{ACTr}). 
   By Proposition~\ref{ACT}, the truth of (HWC$_d$)  also would
   imply (ARC)
   for Gorenstein domains of
   arbitrary dimension. 
   It is not known whether or not (ARC) implies (HWC$_d$).
   
   Concerning (ARC), there are several situations in which the vanishing of
$\Ext^i_R(M,M\oplus R)$ for a specific finite set of values of $i$ is enough
 to deduce that $M$ is free, or, perhaps, that $M$ 
has finite projective dimension;
 see for example, results of  Huneke and Leuschke  \cite[Main Theorem]{HL};  Araya \cite[Corollary 10]{Araya}; and  Goto and Takahashi \cite[Theorem 1.5]{GT}.
 Our Proposition~\ref{prop:aardvark} and Theorems~\ref{prop:jor} and \ref{thm:arcminimal} are results
  along these lines.  In addition, we consider a general version of (ARC) that involves two modules:

\begin{question}\label{question1} For $M$ and $N$ finitely generated $R$-modules,
can one find integers $s$ and $t$, with $1\le s \le t$, such that the vanishing of
 $\Ext^i_R(M,N\oplus R)$,  for all $i$ with $ s\leq i \leq t$, ensures that 
  $M$ or $N$ has finite projective dimension?
\end{question}

\medskip

The main body of this paper is an investigation of Question~\ref{question1}   and of these conjectures   over
  a quasi-fiber product ring.
 (See Setting~\ref{fpset} for conventions and definitions.)  
Nasseh and Takahashi \cite{NT} introduced the notion of a ``local ring with quasi-decomposable maximal ideal" as 
an extension of the notion of fiber product ring; we call it a 
``quasi-fiber product ring" here.

The class of quasi-fiber product  rings includes, for instance, every regular local ring of dimension $d\ge 2$ and every non-hypersurface Cohen-Macaulay  ring  with minimal multiplicity
and with infinite residue field,
  as well as  every two-dimensional non-Gorenstein normal domain with a rational singularity (\cite[Examples 4.7 and 4.8]{NT}) and, 
  of course, every fiber product ring.
Recently the
 study of fiber product rings has become an active research topic, as is evident in articles by
 Nasseh, Sather-Wagstaff, Takahashi and VandeBogart \cite{NS},  \cite{NSTV}, \cite{NT}, \cite{NTV},  \cite{T},  and the current authors \cite{TVRS}.

We briefly describe
 the contents of the paper.
Section 2 gives the main definitions and basic facts for the rest of this work. Section 3 concerns 
the vanishing of $\Ext$ over a quasi-fiber product ring. 
In Notation and Remarks~\ref{uabnot}, we define and discuss 
 ``AB rings'' and ``Ext-bounded rings'', introduced in Huneke's and Jorgensen's article \cite{HJ}; we
 prove, in Theorem~\ref{thm:uac} and Proposition~\ref{cor:class}, that a Gorenstein quasi-fiber product ring
  has both of these properties. In  Corollary \ref{cor:arc} we verify (ARC) for quasi-fiber product rings. Theorems~\ref{thm:Ext0} and ~\ref{evodthm00} and their corollaries 
   give implications  of the vanishing of finitely many $\Ext_R^i(M,N)$ 
  over a quasi-fiber product ring $R$ under an additional assumption 
  of Tor-rigidity for $N$. 
  
  The remaining theorems of Section 3 concern implications of the vanishing of finitely many $\Ext^i_R(M,M)$
  over quasi-fiber product rings without the additional assumption 
  of Tor-rigidity. 
  We show in Theorem~\ref{thm:arcminimal} that quasi-fiber product rings satisfy a sharper version of (ARC):  For $M$ a finitely generated module over a quasi-fiber product ring, there is a positive integer $b$ such that, if $\Ext^i_R(M,M\oplus R)=0$,
for every $i$ with  $1\le i\le b$, then $M$ is free. Moreover
  Corollary \ref{cor:arcminimal} states that, if   $M$ is a  finitely generated  module over a fiber product ring and $\Ext^i_R(M,M\oplus R)=0$,
for every  $i$ such that  $1\le i\le 6$, then $M$ is free. This improves Nasseh and Sather-Wagstaff's result that fiber product rings satisfy (ARC) \cite{NS}. 

 In Section 4 we apply the results of Section 3 to obtain some positive results related to (HWC$_d$) and we consider
 a more general condition involving two modules.

\section{Setup and background}
This section gives basic definitions and properties that are used in later sections.

\begin{Setting}\label{fpset}
Throughout this paper,
$(R,\fm,k)$, or simply $(R,\m)$, denotes a local ring with maximal ideal $\mathfrak{m}$ and residue field $k$. Local rings are always assumed to be 
commutative and Noetherian, and modules are always assumed to be finitely generated.
\begin{itemize}
    \item[(i)] $(R,\fm,k)$ is the {\it  fiber product ring}
     $S\times_kT$ of two local rings $S$ and $T$, with the same residue field $k$, if $R$ is the subring of $S\times T$ 
consisting of pairs $(s,t)$ such that $s\in S$, $t\in T$ and $\pi_S(s) = \pi_T(t)$, where $\pi_S$ and $\pi_T$ denote reduction modulo the
maximal ideals $\mathfrak{m}_S$ and $\mathfrak{m}_T$.  We always assume fiber product rings are 
{\em non-trivial}; that is, neither 
$S$ nor $T$ is equal to $k$. 

 \item[(i$'$)] $(R,\fm,k)$ has {\it decomposable maximal ideal}  if  $\m= I\oplus J$, where $I$ and $J$ are
    nonzero ideals of $R$.
    
     \item[(ii)] $(R,\fm,k)$ is a {\it  quasi-fiber product ring}  if there exists an
    $R$-sequence $\underline{x}:=x_1,\ldots,x_n$, of length $n\geq 0$, such that 
    $R/(\underline{x})$ is a non-trivial fiber product ring.
    \item[(ii$'$)]   $(R,\fm,k)$ has  {\it quasi-decomposable  maximal ideal} if there exists an
    $R$-sequence $\underline{x}:=x_1,\ldots,x_n$, of length $n\geq 0$, such that $\m/(\underline{x})$ is decomposable.
\end{itemize}

Ogoma \cite[Lemma 3.1]{Og} observed the following: 

\begin{fact}\label{formulafiber}
\noindent A local ring $(R,\fm,k)$ has decomposable
 maximal ideal  if and only if  $R$ can be realized as a non-trivial fiber product.  In fact, if  $\m= I\oplus J$, the map $R\to S\times_{k}T$, given by $r\mapsto (r+I,r+J)$ is an isomorphism, where $S=R/I$ and $T=R/J$. For the converse, 
  if $R=S\times_k T$, then $\mathfrak{m}= \mathfrak{m}_S\oplus \mathfrak{m}_T$, where $\mathfrak{m}_S$ and $\mathfrak{m}_T$ are the maximal ideals of $S$ and $T$, respectively.  
\end{fact}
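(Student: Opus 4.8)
The plan is to verify the two implications directly, using the explicit maps named in the statement; neither direction requires more than bookkeeping. For the forward implication, suppose $\m = I \oplus J$ with $I$ and $J$ nonzero ideals. First I would record the structural setup: since $I, J \subseteq \m$, the quotients $S := R/I$ and $T := R/J$ are local rings with maximal ideals $\m/I$ and $\m/J$ and common residue field $R/\m = k$, so the fiber product $S \times_k T$ is defined; it is non-trivial because $\m_S = \m/I \cong J \ne 0$ and likewise $\m_T \cong I \ne 0$. Next I would check that $\varphi \colon R \to S \times T$, $r \mapsto (r+I, r+J)$, is a ring homomorphism with kernel $I \cap J = 0$, and that its image lies inside $S \times_k T$ because $\pi_S(r+I) = r+\m = \pi_T(r+J)$. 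The one point worth spelling out is surjectivity of $\varphi$ onto $S \times_k T$: given a pair $(s+I, t+J)$ with $s - t \in \m$, write $s - t = a + b$ with $a \in I$ and $b \in J$, and put $r := s - a = t + b$; then $r - s \in I$ and $r - t \in J$, so $\varphi(r) = (s+I, t+J)$. Hence $\varphi$ is an isomorphism onto the non-trivial fiber product $S \times_k T$.

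For the converse, suppose $R = S \times_k T$ is non-trivial. I would first identify the maximal ideal of $R$ as $\m = \m_S \times \m_T$, which is contained in $R$ since $\pi_S(\m_S) = 0 = \pi_T(\m_T)$, and which is maximal because $R/\m \cong k$ (a brief check shows every element of $R$ lying outside it is a unit, so $R$ is local). Then set $I := \m_S \times 0$ and $J := 0 \times \m_T$; these are ideals of $R$ because $R \subseteq S \times T$ acts coordinatewise, and $I \cap J = 0$ while $I + J = \m_S \times \m_T = \m$, so $\m = I \oplus J$. Non-triviality forces $\m_S \ne 0 \ne \m_T$, hence $I \ne 0 \ne J$, which is exactly the decomposability of $\m$.

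There is no genuine obstacle here; the only places that demand a little care are (a) confirming that the image of $\varphi$ is exactly the fiber product, rather than a proper subring — this is precisely the surjectivity computation above — and (b) in the converse, correctly pinning down the maximal ideal of $R$ and verifying that the ideals $I$ and $J$ so defined actually lie in $R$ and are $R$-submodules. Everything else reduces to a one-line verification.
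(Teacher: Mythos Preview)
Your proposal is correct and follows exactly the approach indicated in the paper: the paper does not give a separate proof beyond the statement itself, which already names the isomorphism $r\mapsto(r+I,r+J)$ for the forward direction and the decomposition $\mathfrak m=\mathfrak m_S\oplus\mathfrak m_T$ for the converse, citing Ogoma for the observation. You have simply filled in the routine verifications (kernel, surjectivity, identification of the maximal ideal) that the paper leaves implicit.
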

Similarly, items (ii) and (ii$'$) are equivalent.
We often say that $(R,\fm,k)$ is a {\it  quasi-fiber product ring with respect to} the regular sequence 
$\underline{x}$, or $\fm$ is 
{\it quasi-decomposable  
with respect to}  $\underline{x}$.  The case $n=0$ is the case of a fiber product ring, equivalently, a local ring with decomposable maximal ideal.  In this article all fiber product rings and quasi-fiber product rings are assumed to be non-trivial.
\end{Setting}

Examples of quasi-fiber rings abound.  For example, every regular local ring of dimension $d\ge 2$ is a quasi-fiber ring.  (If $x_1,\dots,x_d$ generate the maximal ideal, then $R/(x_1x_2,x_3,x_4,\dots,x_d)$ has decomposable maximal ideal.)  Many interesting examples of  quasi-fiber product rings can be found in the paper \cite[\S4]{NT} by Nasseh and Takahashi.

\begin{fact}\label{pdfin1}  Let $(R,\fm,k)$ be a fiber product ring and let $M$ be a finitely generated $R$-module. Then:
\begin{enumerate}\item $\depth R\leq 1$ (\cite{L81} or \cite[Remark 1.9]{TVRS}).
\item  
If  $\pd_RM <\infty$, then  $\pd_RM \le 1$. 
\end{enumerate}
{\rm For (2), use (1) and 
  Remark~\ref{rem:ABF}, the Auslander-Buchsbaum Formula:}
 \end{fact}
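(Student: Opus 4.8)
The plan is to treat the two parts in sequence: part~(1) carries essentially all of the content, and part~(2) is immediate from~(1) together with the Auslander--Buchsbaum formula.

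For part~(1), I would use Fact~\ref{formulafiber} to write $\m = I \oplus J$ with $I$ and $J$ nonzero ideals, so that $IJ \subseteq I \cap J = 0$; in particular every element of $I$ annihilates $J \neq 0$ and every element of $J$ annihilates $I \neq 0$. If every element of $\m$ is a zerodivisor, then $\depth R = 0$ and we are done. Otherwise choose a nonzerodivisor $x \in \m$ and write $x = a+b$ with $a \in I$ and $b \in J$; since $x$ lies in neither $I$ nor $J$ (elements of those ideals are zerodivisors), both $a$ and $b$ are nonzero. The claim is that the image of $b$ in $R/xR$ is a nonzero element killed by $\m$, so that $\m \in \Ass(R/xR)$, hence $\depth(R/xR) = 0$, hence $\depth R = \depth(R/xR) + 1 = 1$. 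For the annihilation: $\m b = Ib + Jb = Jb$ since $Ib \subseteq IJ = 0$, and $xb' = ab' + bb' = bb'$ for every $b' \in J$ (again $ab' \in IJ = 0$), so $\m b = bJ = xJ \subseteq xR$. For nonvanishing: if $b = xc$ with $c$ a unit then $x = bc^{-1} \in J$, impossible; and if $c \in \m$, writing $c = c_I + c_J$ and comparing $I$- and $J$-components in $b = (a+b)(c_I+c_J)$ gives $b = bc_J$, so $b(1-c_J) = 0$ and $b = 0$, again impossible. (Alternatively, part~(1) could simply be quoted from \cite{L81} or \cite[Remark~1.9]{TVRS}.)

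For part~(2), suppose $\pd_R M < \infty$. By the Auslander--Buchsbaum formula (Remark~\ref{rem:ABF}), $\pd_R M + \depth_R M = \depth R$, and since $\depth_R M \geq 0$ we conclude from part~(1) that $\pd_R M \leq \depth R \leq 1$.

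The only delicate point is the zerodivisor bookkeeping in part~(1)---in particular verifying that $b \notin xR$---but this amounts to routine manipulation of the direct-sum decomposition $\m = I \oplus J$, and no deeper tool is required.
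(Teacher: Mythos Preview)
Your proposal is correct. For part~(2) you do exactly what the paper does: invoke the Auslander--Buchsbaum formula together with part~(1) to bound $\pd_R M$ by $\depth R \le 1$.

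For part~(1) there is a mild difference: the paper does not argue at all but simply records the statement with citations to \cite{L81} and \cite[Remark~1.9]{TVRS}, whereas you supply a direct, self-contained argument via the decomposition $\m = I \oplus J$, producing an explicit nonzero socle element of $R/xR$. Your argument is sound---the verification that $b \notin xR$ by case-splitting on whether $c$ is a unit is clean, and the annihilation step $\m b = Jb = xJ$ is correct. The advantage of your route is that it makes the fact fully self-contained; the paper's route keeps the exposition short by deferring to the literature (you yourself note this alternative). Either is perfectly acceptable here.
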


\begin{remark}\label{rem:ABF} {\it Auslander-Buchsbaum Formula}
\cite[A.5. Theorem, p. 310]{LW} Let $M$ be a nonzero  module  of finite projective dimension (pd) over a local ring~$R$. Then $\depth M+\pd_RM=\depth R$. 
 Thus $\pd_RM \le \depth R$.
\end{remark}

\begin{definitionsremarks} \label{rem:rank} Recall that a finitely generated module $M$ over a local ring $R$ {\em has rank} provided there is an integer
$r$ such that $M_P$ is $R_P$-free of rank $r$ for every $P\in\Ass(R)$.  Equivalently, $M\otimes_RK$ is free as a $K$-module, where $K$ is the total quotient ring of $R$,
 namely $K=\{\text{non-zerodivisors of}\ R\}^{-1}R$.  If $R$ is an integral domain, $M$ {\em always} has rank.   It is probably better, for moving about from one ring to another, not to assume that $R$ is a domain, but to invoke the weaker hypothesis that $M$ have rank.  (Example~\ref{example1} shows
why some such hypothesis is needed.)  

A {\it hypersurface ring} is a local ring $(R,\m)$ whose $\m$-adic completion $\widehat R$ has the form $\widehat R=S/fS$, 
where  $(S,\m_S)$ is a complete regular local ring and $f\in \m_S$.
More generally 
a local ring $R$ with maximal ideal $\m$ is a {\it complete intersection} if the $\m$-adic completion $\widehat R$ has the form $S/(f)$, 
where $f$ is a regular sequence and $S$ is a complete regular local ring. (By 
Cohen's Structure Theorem, the ring $S$ is 
a ring of formal power series over a field or 
over a discrete valuation ring.) 

An $R$-module is {\em torsion-free} provided the natural map $M \to M\otimes_RK$ is
injective.  Equivalently, every non-zerodivisor in $R$ is a non-zerodivisor on $M$.  This leads
 to the following version of (HWC$_d$): 
\end{definitionsremarks}

\begin{definition}  \label{defhwc'} Let  $R$ be a local ring (not necessarily an integral domain).
 \begin{enumerate}
\item[] (HWC)   {\bf Huneke-Wiegand Conjecture.}   Assume  $R$ is     Gorenstein, and $M$ is a torsion-free $R$-module with rank.  If $M\otimes_RM^*$ is
maximal Cohen-Macaulay, then $M$ is free. 
  \end{enumerate}
  \end{definition}

Following \cite{CeT}, we consider conditions, labeled  (AR) and (HW) here, on a local ring $(R,\m)$:  
\begin{definition} \label{defarhw} 
Let  $R$ be a local ring.
\begin{enumerate}   \item[] (AR)\ \  {\bf Artin-Reiten Condition.}   For every finitely generated torsion-free $R$-module $M$,  $$\Ext^i_R(M,M\oplus R)=0\text{ for every }i\geq 1\implies M\text{ is free.}$$ 
 \item[] (HW) \ \  {\bf Huneke-Wiegand Condition.}   For every  finitely generated  torsion-free module  $M$ with rank, 
  $$M\otimes_R M^*\text{ is  MCM }\implies M 
 \text{ is  free.}$$
 \end{enumerate} 
 Thus (ARC) says that {\em every} local ring $(R,\m)$ satisfies (AR), and (HWC) says that 
 {\em every} local Gorenstein ring satisfies (HW).
  \end{definition}
 
Proposition~\ref{ACT} gives the connection between the Huneke-Wiegand Conjecture and the commutative version of the Auslander-Reiten Conjecture. 
The proof of Proposition~\ref{ACT}  in \cite{CRW} is more explicit than in \cite{Araya} and \cite{CeT}. 

\begin{proposition} \label{ACT} \cite[Proposition 8.6]{CRW}. Let $R$ be a local Gorenstein ring. Consider the following statements regarding the conditions of Definition~\ref{defarhw}:

$(i) ~R$ satisfies $($HW$)$.

$(ii) ~R_\p$ satisfies $($HW$)$, for every prime ideal $\p$ with height $\p \le 1$.

$(iii) ~R$ satisfies $($AR$)$.

$(iv) ~R_\p$ satisfies $($AR$)$, for every prime ideal $\p$ with height $\p \le 1$.

\noindent Then $(ii) \implies (i) \implies (iii) \Longleftarrow (iv)$.
\end{proposition}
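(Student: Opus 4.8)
The plan is to establish the three implications $(ii)\Rightarrow(i)$, $(i)\Rightarrow(iii)$ and $(iv)\Rightarrow(iii)$ separately. In each case the starting point is the observation that, over the Gorenstein ring $R$, the hypothesis ``$\Ext^i_R(M,R)=0$ for all $i\ge 1$'' forces $M$ to have Gorenstein dimension zero, i.e.\ to be totally reflexive: in particular $M$ is maximal Cohen--Macaulay, $M\cong M^{**}$, and $M^*$ is again totally reflexive. For $(ii)\Rightarrow(i)$ and $(iv)\Rightarrow(iii)$ I would argue by induction on $d=\dim R$. When $d\le 1$ the maximal ideal $\m$ has height $\le 1$, so the hypothesis (ii) (resp.\ (iv)) applied to $\m$ says precisely that $R=R_{\m}$ satisfies (HW) (resp.\ (AR)), and there is nothing to prove. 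When $d\ge 2$, I would first localize: for every non-maximal prime $\mathfrak p$, the ring $R_{\mathfrak p}$ is Gorenstein of dimension $<d$ and, since heights of primes contained in $\mathfrak p$ are unchanged on passing to $R_{\mathfrak p}$, it again satisfies the standing hypothesis; hence the induction hypothesis gives that $R_{\mathfrak p}$ satisfies (HW) (resp.\ (AR)), so the module under consideration is free after localizing at $\mathfrak p$. Thus $M$ is locally free on the punctured spectrum of $R$. After passing to the completion (which preserves Gorensteinness, the relevant $\Ext$-vanishing, maximal Cohen--Macaulayness of $M\otimes_RM^*$, the property of being locally free on the punctured spectrum, and freeness of a finitely generated module), it only remains to prove that a module locally free on the punctured spectrum is in fact free.

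For the endgame of $(ii)\Rightarrow(i)$ I would use the canonical map $\theta\colon M^*\otimes_RM\to\Hom_R(M,M)$, $\phi\otimes m\mapsto(x\mapsto\phi(x)m)$. Since $M$ is locally free on the punctured spectrum, $\theta$ becomes an isomorphism after localizing at each non-maximal prime, so $\ker\theta$ and $\coker\theta$ have finite length. Now $M^*\otimes_RM\cong M\otimes_RM^*$ is maximal Cohen--Macaulay of depth $d\ge 2$, so it has no nonzero submodule of finite length; hence $\ker\theta=0$. Moreover $\Hom_R(M,M)$ has positive depth (it is a submodule of a power of $M$) and $H^1_{\m}(M^*\otimes_RM)=0$, so the long exact local cohomology sequence of $0\to M^*\otimes_RM\to\Hom_R(M,M)\to\coker\theta\to 0$ forces $\coker\theta=0$. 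Thus $\theta$ is surjective, so $\operatorname{id}_M=\theta\bigl(\sum_i\phi_i\otimes m_i\bigr)$ for suitable $\phi_i\in M^*$, $m_i\in M$; the family $\{(m_i,\phi_i)\}$ is a dual basis, so $M$ is projective, hence free. For the endgame of $(iv)\Rightarrow(iii)$ I would instead invoke Auslander's theorem on the existence of almost split (Auslander--Reiten) sequences in the category of maximal Cohen--Macaulay modules over the complete Gorenstein local ring $R$: a non-free indecomposable summand $M_j$ of $M$ is maximal Cohen--Macaulay and locally free on the punctured spectrum, hence occurs as the right-hand term of a non-split almost split sequence, so $\Ext^1_R(M_j,\tau M_j)\ne 0$. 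But in the stable category of maximal Cohen--Macaulay modules one has $\tau M_j\simeq\Omega^{2-d}M_j$, whence $\Ext^1_R(M_j,\tau M_j)$ is the $(d-1)$st stable self-extension group $\widehat{\Ext}^{d-1}_R(M_j,M_j)$, which equals the ordinary $\Ext^{d-1}_R(M_j,M_j)$ because $d-1\ge 1$; and the latter is a direct summand of $\Ext^{d-1}_R(M,M)=0$. This contradiction shows $M$ has no non-free summand, so $M$ is free.

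For $(i)\Rightarrow(iii)$ there is no induction. Given $M$ torsion-free with $\Ext^i_R(M,M\oplus R)=0$ for all $i\ge 1$, $M$ is totally reflexive as above. A depth estimate along a free resolution of $M$, using the vanishing of all the higher self-$\Ext$ of $M$, shows that $\Hom_R(M,M)$ is maximal Cohen--Macaulay; and since $M$ is reflexive, adjunction gives $\Hom_R(M,M)=\Hom_R(M,M^{**})\cong\Hom_R(M\otimes_RM^*,R)=(M\otimes_RM^*)^*$. Being the dual of a maximal Cohen--Macaulay module over a Gorenstein ring, $(M\otimes_RM^*)^*$ is totally reflexive, so its dual $(M\otimes_RM^*)^{**}$ is maximal Cohen--Macaulay. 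It then suffices to show that $M\otimes_RM^*$ is reflexive, for then $M\otimes_RM^*=(M\otimes_RM^*)^{**}$ is maximal Cohen--Macaulay and (HW) applies, giving that $M$ is free. I expect this last point to be the principal obstacle: one must control the torsion submodule and the codimension $\ge 2$ cokernel of the map $M\otimes_RM^*\to(M\otimes_RM^*)^{**}$ — equivalently the vanishing of $\Ext^1$ and $\Ext^2$ of the transpose of $M\otimes_RM^*$ against $R$ — in terms of the vanishing of $\Ext^{\ge 1}_R(M,M)$ and $\Ext^{\ge 1}_R(M,R)$; this is essentially the entire content of ``(HW) implies (AR)''. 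A secondary but genuine bookkeeping issue, common to all three parts, is ensuring that the modules fed into (HW) actually have rank (for instance by first reducing to the case where $R$ is a domain, the case of primary interest for (HWC$_d$)).
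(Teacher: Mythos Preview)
Your approach to $(iv)\Rightarrow(iii)$ is Araya's argument, which is exactly what the paper cites. Your approach to $(ii)\Rightarrow(i)$ is correct but takes a detour that the paper avoids. You run an induction on $\dim R$, pass to the completion, reduce to the case where $M$ is locally free on the punctured spectrum, and then analyse $\ker\theta$ and $\coker\theta$ via local cohomology. The paper's route is shorter and needs neither induction nor completion: since $M\otimes_RM^*$ is MCM over a Gorenstein ring it is reflexive, and $\Hom_R(M,M)$ is torsion-free (it embeds in a power of $M^*$). One then uses directly the fact (your point~(b) in disguise) that a homomorphism from a reflexive module to a torsion-free module is an isomorphism if and only if it is so at every prime of height $\le 1$. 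Hypothesis~(ii) gives the latter (at such $\mathfrak p$, $M_{\mathfrak p}$ is free, so $\theta_{\mathfrak p}$ is an isomorphism), hence $\theta$ is an isomorphism globally, and the Auslander--Goldman criterion finishes. This bypasses the whole punctured-spectrum reduction; what it buys is that you never need $d\ge 2$, depth inequalities, or $H^1_{\mathfrak m}$-vanishing.

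For $(i)\Rightarrow(iii)$ you have correctly located the crux --- one must show that $M\otimes_RM^*$ is MCM (equivalently reflexive, in the one-dimensional case) in order to feed it into (HW) --- but you have not closed it, and you say so. The paper does not sketch this implication either; it simply defers to \cite{CRW}. So your write-up is an honest outline, not yet a proof: the reflexivity of $M\otimes_RM^*$ (and the rank hypothesis you flag) are exactly the missing pieces, and your proposed reduction via $(M\otimes_RM^*)^{**}\cong\Hom_R(M,M)^*$ does not by itself yield them.
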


The main ideas in the proof of (ii)$\implies $(i) are
\begin{enumerate}
\item[(a)]   \cite[A.1]{AuGol} A module $M$ is free $\iff$ the natural map 
$$
M\otimes_RM^*\to \Hom_R(M,M)\,,
$$
 taking $x\otimes f$ to the
 homomorphism $y\mapsto (f(x))y$, for $x,y\in M$ and  $f\in M^*$, is an 
   isomorphism; and
\item[(b)] A map from a reflexive module to a torsion-free module is an isomorphism if and only if it is an isomorphism at each height-one prime ideal.
\end{enumerate}
Both (a) and (b) were used by Auslander in his proof of \cite[Proposition 3.3]{aus}.  The implication (iv) $\implies$ (iii) is due to Araya \cite{Araya}.
Two obvious questions: Does (i) $\implies$ (ii)? Does (iii) $\implies$ (iv)?

 \smallskip
 
 \begin{remark} \label{ACTr}By Proposition \ref{ACT}, the condition (HW) being
 satisfied for one-dimensional  local Gorenstein rings would ensure that it holds for every local
Gorenstein ring.
Attempts to prove (HW) for Gorenstein quasi-fiber rings,  however,  do not immediately reduce to the one-dimensional case, since localizations of  two-dimensional
 quasi-fiber product rings at height-one primes are {\it not} necessarily  quasi-fiber rings. For example, a two-dimensional regular local ring is a quasi-fiber ring,
 but its localizations at height-one primes are discrete valuation rings, which are {\em not} quasi-fiber rings.
\end{remark}
 
\begin{fact}\label{factGor} Let $(R,\m)$ be a fiber product ring, say, $R= S\times_kT$.
The following statements are equivalent:
\begin{itemize}
    \item[(i)] $R$ is Gorenstein.
    \item[(ii)] $R$ is a  $1$-dimensional hypersurface, as in Remark~\ref{rem:rank}.
    \item[(iii)] $S$ and $T$ are discrete valuation rings.
\end{itemize}
\end{fact}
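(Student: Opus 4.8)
The plan is to establish $(ii)\Rightarrow(i)\Rightarrow(iii)\Rightarrow(ii)$. The implication $(ii)\Rightarrow(i)$ is immediate, since a hypersurface is a complete intersection and hence Gorenstein.

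For $(i)\Rightarrow(iii)$, assume $R$ is Gorenstein. I would first record that $\dim R=1$ and that $S$ and $T$ are one-dimensional Cohen--Macaulay local rings: since $R$ is Cohen--Macaulay, $\dim R=\depth R\le 1$ by Fact~\ref{pdfin1}(1); writing $\m_R=\m_S\oplus\m_T$ as in Fact~\ref{formulafiber} and using that the ideals $\m_S,\m_T$ of $R$ satisfy $\m_S\m_T=0$, one computes $\operatorname{soc}(R)=\operatorname{soc}(S)\oplus\operatorname{soc}(T)$; if $\dim R=0$ then $S$ and $T$ are Artinian and, being nontrivial, have nonzero socle, so $R$ would have type $\ge 2$, a contradiction, whence $\dim R=\depth R=1$; then $\operatorname{soc}(R)=0$, so $\operatorname{soc}(S)=\operatorname{soc}(T)=0$, i.e.\ $\depth S,\depth T\ge 1$, and $1=\dim R=\max\{\dim S,\dim T\}$ together with $\dim S\ge\depth S$, $\dim T\ge\depth T$ forces $\dim S=\dim T=\depth S=\depth T=1$. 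Since $\m_S\ne\m_S^2$ and no associated prime of $S$ is maximal, prime avoidance furnishes a nonzerodivisor $s_0\in\m_S\setminus\m_S^2$; choose $t_0\in\m_T\setminus\m_T^2$ likewise. Then $x:=(s_0,t_0)$ is a nonzerodivisor on $R$, so $R/xR$ is Gorenstein Artinian and $\dim_k\operatorname{soc}(R/xR)=1$. The crux is the identification $R/xR\cong P/k\xi$, where $P:=(S/s_0\m_S)\times_k(T/t_0\m_T)$ and $\xi\in P$ is the image of $(s_0,t_0)$: from the fiber-product description one checks $R/(s_0\m_S\oplus t_0\m_T)\cong P$ and that the image of $xR$ there is the cyclic submodule generated by $\xi$, which is $\cong R/\m_R=k$ since $\Ann_R\xi=\m_R$. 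Writing $\xi=(\bar s_0,\bar t_0)$, each of $\bar s_0\in\operatorname{soc}(S/s_0\m_S)$, $\bar t_0\in\operatorname{soc}(T/t_0\m_T)$ is nonzero, and $\bar s_0\notin(\m_{S/s_0\m_S})^2$ because $s_0\notin\m_S^2$. If $S$ were not a DVR then $\mu(\m_S)\ge 2$, hence $\mu(\m_{S/s_0\m_S})=\mu(\m_S)\ge 2$; since a Gorenstein Artinian local ring of embedding dimension at least two has its socle contained in the square of its maximal ideal, $S/s_0\m_S$ could not be Gorenstein, so $\dim_k\operatorname{soc}(S/s_0\m_S)\ge 2$. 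As $\operatorname{soc}(P)=\operatorname{soc}(S/s_0\m_S)\oplus\operatorname{soc}(T/t_0\m_T)$ and the natural map $\operatorname{soc}(P)\to\operatorname{soc}(P/k\xi)$ has kernel $k\xi$, this gives $\dim_k\operatorname{soc}(R/xR)\ge(2+1)-1=2$, contradicting Gorensteinness of $R$. Hence $S$ is a DVR, and by symmetry so is $T$.

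For $(iii)\Rightarrow(ii)$, assume $S$ and $T$ are DVRs. As in the preliminary step, $\dim R=\depth R=1$, so $R$ is Cohen--Macaulay, and $\mu(\m_R)=\dim_k(\m_S/\m_S^2\oplus\m_T/\m_T^2)=\mu(\m_S)+\mu(\m_T)=2=\dim R+1$. In the minimal Cohen presentation $\widehat R=W/I$, the ring $W$ is a two-dimensional complete regular local ring and $I\subseteq\m_W^2$; since $\widehat R$ is Cohen--Macaulay, $I$ is an unmixed height-one ideal of $W$, hence principal because $W$ is a unique factorization domain. Thus $\widehat R=W/(f)$ with $0\ne f\in\m_W^2$, so $R$ is a hypersurface.

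I expect the main obstacle to be the identification $R/xR\cong P/k\xi$---carefully tracking the chain $s_0\m_S\oplus t_0\m_T\subseteq xR\subseteq R$ of ideals and the isomorphisms induced by the fiber-product structure---together with the appeal to the standard fact that a Gorenstein Artinian local ring has socle equal to $\m^s$, where $s$ is the largest integer with $\m^s\ne 0$ (so that when the embedding dimension is at least two one has $\m^2\ne 0$ and $s\ge 2$, whence the socle lies in $\m^2$). The passage $(iii)\Rightarrow(ii)$ is then routine, granted the classical fact that an unmixed height-one ideal of a regular local ring is principal.
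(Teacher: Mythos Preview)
Your argument is correct. The paper does not actually give a proof of this fact: it simply cites the Main Theorem of \cite{NTV} for the implications $(i)\Leftrightarrow(ii)\Rightarrow(iii)$ and \cite[Proposition~2.2(3)]{EGI} for $(iii)\Rightarrow(i)$. Your route $(ii)\Rightarrow(i)\Rightarrow(iii)\Rightarrow(ii)$ is therefore genuinely different and entirely self-contained. The hard step is your $(i)\Rightarrow(iii)$, and your socle bookkeeping via the identification $R/xR\cong P/k\xi$ is clean and correct; the key lemma you invoke---that in a Gorenstein Artinian local ring $(A,\n)$ with $\mu(\n)\ge 2$ the socle lies in $\n^2$---follows immediately from the fact that every nonzero ideal of such an $A$ contains the socle (so if $z$ spans the socle and $y\in\n$ is independent of $z$ modulo $\n^2$, then $z\in(y)\subseteq\n$ forces $z=ay$ with $a\in\n$, hence $z\in\n^2$). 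For $(iii)\Rightarrow(ii)$, your appeal to ``unmixed height-one ideals in a UFD are principal'' is fine, but note that it is even quicker to observe that $\pd_W(\widehat R)=\depth W-\depth\widehat R=1$ by Auslander--Buchsbaum, so $I$ is a free, hence principal, $W$-module. What your approach buys is independence from the external references; what the paper's approach buys is brevity.
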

The implications (i) $\iff$ (ii) $\implies$ (iii) constitute the Main Theorem of \cite{NTV}, 
while the implication (iii) $\implies$ (i) is Part (3) of \cite[Proposition 2.2]{EGI}.

\begin{fact}\label{factGorqfd1} Let $R$ be a  Gorenstein quasi-fiber product ring of dimension 1. Then $R$ is
 a  Gorenstein fiber product ring.
 
  To see this, let  $\underline{x}=x_1,\ldots,x_n$ be an $R$-sequence such that $R/(\underline x)$ is a fiber product ring.
 By 
 Fact~\ref{factGor}, the dimension of $R/(\underline x)$ is $1$. Thus $n=0$ and $R$ is 
 a  Gorenstein fiber product ring.
\end{fact}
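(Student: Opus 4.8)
The plan is to unwind the definition of a quasi-fiber product ring and then force the length of the defining regular sequence to be zero by a dimension count. First I would fix, using the hypothesis, an $R$-sequence $\underline{x} = x_1,\dots,x_n$ with $n \geq 0$ such that $R/(\underline{x})$ is a non-trivial fiber product ring; the case $n = 0$ is to be read as $R/(\underline{x}) = R$, and this is exactly the conclusion we want, so it suffices to rule out $n \geq 1$.

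Next I would use that the Gorenstein property descends along a quotient by a regular sequence: since $R$ is Gorenstein, so is $R/(\underline{x})$. Thus $R/(\underline{x})$ is a Gorenstein fiber product ring, and the implication (i) $\implies$ (ii) of Fact~\ref{factGor} identifies it as a one-dimensional hypersurface; in particular $\dim R/(\underline{x}) = 1$. Then I would run the dimension count: a Gorenstein ring is Cohen-Macaulay, so killing the $R$-regular sequence $\underline{x}$ lowers dimension by exactly $n$, giving $\dim R/(\underline{x}) = \dim R - n = 1 - n$. Comparing with the previous paragraph yields $1 - n = 1$, hence $n = 0$, so that $R = R/(\underline{x})$ is itself a non-trivial Gorenstein fiber product ring.

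The argument is essentially bookkeeping; the only points requiring any care are the two standard facts invoked — that a quotient of a Gorenstein local ring by a regular sequence is again Gorenstein, and that over a Cohen-Macaulay ring the dimension drops by precisely the length of a regular sequence — combined with the structural input of Fact~\ref{factGor} that pins the dimension of a Gorenstein fiber product ring at $1$. I do not foresee a genuine obstacle here; the main thing to watch is that the degenerate case $n = 0$ is treated consistently, which it is.
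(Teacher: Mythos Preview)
Your proposal is correct and follows essentially the same route as the paper's own argument: fix the defining regular sequence, use Fact~\ref{factGor} to pin $\dim R/(\underline{x})$ at $1$, and compare with $\dim R - n$ to force $n=0$. You have simply made explicit the two background facts (Gorenstein descends modulo a regular sequence, and dimension drops by exactly $n$ over a Cohen--Macaulay ring) that the paper leaves implicit.
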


   Combining   Fact \ref{factGor} 
   with Fact \ref{pdfin1}, we have:
        \begin{rem}\label{rem:size} Let $R$ be a quasi-fiber product ring, and  
        let $n$ be the length  of a regular sequence $\underline{x}$ such that $R/(\underline{x})$
is a fiber product ring. 
         Then $n$ is equal to either $\depth R-1$ or
          $\depth R$.  Moreover, if $R$ is Gorenstein, then $n=\depth R -1$.
        \end{rem}


 \begin{nota} \label{syzbnot}    For an $R$-module $M$,  let $\Omega_R^iM$ denote the $i^{\text{th}}$ syzygy of
$M$ with respect to a minimal $R$-free resolution. We often write $\Omega_RM$ for $\Omega^1_RM$.
\end{nota}

 \begin{lem}\label{lem:syz} \cite[Lemma 5.1]{NT} Let $R$ be a local ring and  $M$ an $R$-module.  Let
 $\underline{x}=x_1,\ldots,x_n$ be an $R$-sequence. Then $\underline{x}$ is a regular sequence on $\Omega_R^nM$.        
 \end{lem}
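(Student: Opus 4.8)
The plan is to induct on the length $n$ of the regular sequence, the case $n=0$ being trivial. Fix a minimal free resolution $\cdots\to F_1\to F_0\to M\to 0$; then for every $j\ge 1$ there is a short exact sequence $0\to\Omega_R^{j+1}M\to F_j\to\Omega_R^jM\to 0$, so in particular $\Omega_R^jM$ is a submodule of the free module $F_{j-1}$, while $\cdots\to F_2\to F_1\to\Omega_R^1M\to 0$ is a minimal free resolution of $\Omega_R^1M$.

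The base case $n=1$ --- and, at every later stage, the claim for the leading term $x_1$ --- is immediate: $x_1$ is a nonzerodivisor on $R$, hence on the free module $F_{n-1}$, hence on its submodule $\Omega_R^nM$. So in the inductive step, assuming the statement for regular sequences of length $n-1$ over an arbitrary local ring, all that remains is to show that $x_2,\dots,x_n$ is a regular sequence on $\Omega_R^nM/x_1\Omega_R^nM$.

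For that I would pass to the local ring $\bar R:=R/x_1R$ and use that $x_1$ is a nonzerodivisor not merely on $\Omega_R^1M$ but on every higher syzygy $\Omega_R^jM$ with $j\ge 1$ (each embedding in a free module). Hence applying $-\otimes_R\bar R$ to the sequences $0\to\Omega_R^{j+1}M\to F_j\to\Omega_R^jM\to 0$ for $j\ge 1$ preserves exactness, since $\Tor_1^R(\bar R,\Omega_R^jM)=0$; splicing the reduced sequences exhibits $\cdots\to\bar F_2\to\bar F_1\to\Omega_R^1M/x_1\Omega_R^1M\to 0$ as a minimal free resolution over $\bar R$ (minimality descends from the original resolution) whose $(n-1)$st syzygy is $\Omega_R^nM/x_1\Omega_R^nM$. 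In other words $\Omega_{\bar R}^{n-1}\bigl(\Omega_R^1M/x_1\Omega_R^1M\bigr)\cong\Omega_R^nM/x_1\Omega_R^nM$. Since $x_2,\dots,x_n$ is a regular sequence on $\bar R$ of length $n-1$, the inductive hypothesis applied over $\bar R$ to the module $\Omega_R^1M/x_1\Omega_R^1M$ yields that $x_2,\dots,x_n$ is a regular sequence on that $(n-1)$st syzygy, i.e.\ on $\Omega_R^nM/x_1\Omega_R^nM$; combined with $x_1$ being a nonzerodivisor on $\Omega_R^nM$, the induction is complete.

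The step I expect to require the most care is the identification of $\Omega_R^nM/x_1\Omega_R^nM$ with a syzygy over $\bar R$. It works only after replacing $M$ by $\Omega_R^1M$, so that $x_1$ is a nonzerodivisor on every module being reduced; run the reduction directly with $M$, and $F_\bullet\otimes_R\bar R$ need not resolve $M/x_1M$ at all, since $x_1$ may fail to be a nonzerodivisor on $M$.
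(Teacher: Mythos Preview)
Your argument is correct. The induction is set up properly, and the key identification $\Omega_R^nM/x_1\Omega_R^nM\cong\Omega_{\bar R}^{\,n-1}\bigl(\Omega_R^1M/x_1\Omega_R^1M\bigr)$ is justified exactly as you describe: every syzygy $\Omega_R^jM$ with $j\ge 1$ sits inside a free module, so $x_1$ is a nonzerodivisor on it, the relevant $\Tor_1$ vanishes, and the reduced complex remains a (minimal) free resolution over $\bar R$. Your caveat at the end is also on target: one must shift to $\Omega_R^1M$ before reducing mod $x_1$.

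There is nothing to compare against in the paper itself: the lemma is stated with a citation to \cite[Lemma 5.1]{NT} and no proof is given. Your write-up supplies a clean self-contained argument for a result the paper merely quotes.
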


\begin{definitionsremarks} \label{defn:A-trans} (i) The {\em Auslander transpose}, written as $\D M$ or $\D_1M$,  of a finitely generated module $M$
 over a local ring $R$ is defined to be the cokernel of the map $F_0^*\to F_1^*$, where 
$$
F_1\to F_0\to M\to 0
$$ is a minimal resolution of $M$, with the $F_i$ free $R$-modules.  Thus one has the exact sequence
\begin{equation}\label{eq:A-trans}
0 \to M^* \to F_0^*\to F_1^* \to \D M \to 0\,.\tag{\ref{defn:A-trans}.0}
\end{equation}

(ii) More generally, for  $n\ge 1$ and  a minimal free resolution $F$ of $M$  over~$R$,
$$
F: \cdots  F_n\to\cdots  \to F_2\to F_1\to F_0\to M\to 0,
$$
define the $n^{\text {th}}$  Auslander transpose  $\D_nM$ by  $\D_nM:=\coker(F_{n-1}^*\to F_n^*)$.
 
 (iii) In \cite[p.4462]{Jor}, Jorgensen uses the notation  ``$\D^0M$"  to mean the same as  our ``$\D M$", and  ``$\D^n$'' 
to mean the same as our ``$\D_{n+1}$''.   Note that, for every $i$ with $0\le i\le n$, $\D_{n}M=    \D_{n-i}\Omega^i_RM$
 \cite[p. 4462]{Jor}.
 
 \end{definitionsremarks}

 After one adjusts the notation as in Definitions and Remarks~\ref{defn:A-trans}(iii) above, \cite[Proposition 3.1(1)]{Jor} states: 
 
\begin{proposition} \label{Jorprop} Let $R$ be a commutative Noetherian ring,  $M$ be a finitely generated $R$-module, and $n\ge 1.$ If $\ext_R^i(M,M\oplus R)=0$, for every $i$ with $1\le i\le n$,
 then: 
  
  $ (1) \Tor^R_i(\D_{n+1}M,M)=0$, for every $i$ with $1\le i\le n$.
 
 $(2)$ The following sequence is exact
 $$\aligned 0&\to \Tor_{n+2}^R(\D_{n+1}M,M)\to \Hom_R(M,R)\otimes_R M \\
 &{\to} \Hom_R(M, M)\to \Tor_{n+1}^R(\D_{n+1}M,M)\to 0,
 \endaligned$$
  where the middle homomorphism $(\Hom_R(M,R)\otimes_R M 
 {\to} \Hom_R(M, M))$ is the natural one.
\end{proposition}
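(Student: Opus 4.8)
The plan is to work directly with a minimal free resolution $F_\bullet\colon\cdots\to F_2\to F_1\to F_0\to M\to 0$ of $M$ and to extract two separate consequences of the hypothesis $\ext^i_R(M,M\oplus R)=0$ for $1\le i\le n$: first, $\ext^i_R(M,R)=0$ for those $i$, which makes the dual complex exact in a range; second, $\ext^i_R(M,M)=0$ for those $i$, which is what ultimately forces the relevant $\Tor$ groups to vanish. Throughout I will use the identification $F_j^*\otimes_RM\cong\Hom_R(F_j,M)$, valid because each $F_j$ is free of finite rank.

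First I would use the vanishing of $\ext^i_R(M,R)$ for $1\le i\le n$ to obtain the exact sequence
$$0\to M^*\to F_0^*\to F_1^*\to\cdots\to F_n^*\to F_{n+1}^*\to \D_{n+1}M\to 0,$$
the right end being exact because $\D_{n+1}M=\coker(F_n^*\to F_{n+1}^*)$ by definition, and the interior being exact because the cohomology of $0\to M^*\to F_0^*\to F_1^*\to\cdots$ at $F_j^*$ is $\ext^j_R(M,R)$. Splicing this along the inclusion $M^*\hookrightarrow F_0^*$ with a minimal free resolution $\cdots\to L_1\to L_0\to M^*\to 0$ produces a genuine free resolution $P_\bullet$ of $\D_{n+1}M$ whose terms in homological degrees $0,1,\dots,n+1$ are $P_j=F_{n+1-j}^*$ with differentials the $R$-duals of those of $F_\bullet$, and whose remaining terms are the $L_k$. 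Then $\Tor^R_i(\D_{n+1}M,M)=H_i(P_\bullet\otimes_RM)$, and for $1\le i\le n$ the stretch of $P_\bullet\otimes_RM$ in homological degrees $i-1,i,i+1$ is, after the above identification, precisely the stretch of the cochain complex $\Hom_R(F_\bullet,M)$ around cohomological degree $n+1-i$. Hence $\Tor^R_i(\D_{n+1}M,M)\cong\ext^{n+1-i}_R(M,M)=0$ for $1\le i\le n$, which is statement $(1)$.

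For statement $(2)$ I would compute $\Tor^R_{n+1}$ and $\Tor^R_{n+2}$ from the same $P_\bullet$, now involving the spliced-in tail. In homological degree $n+1$ the relevant piece of $P_\bullet\otimes_RM$ is $L_0\otimes_RM\to\Hom_R(F_0,M)\to\Hom_R(F_1,M)$: the cycles are $\Hom_R(M,M)$ by left exactness of $\Hom$, and the boundaries coincide with the image of $M^*\otimes_RM\to\Hom_R(F_0,M)$, a map one checks equals the canonical evaluation $\mu\colon M^*\otimes_RM\to\Hom_R(M,M)$ followed by the injective restriction $\Hom_R(M,M)\hookrightarrow\Hom_R(F_0,M)$. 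This identifies $\Tor^R_{n+1}(\D_{n+1}M,M)$ with $\coker\mu$. Going one step further, since $L_0\otimes_RM\twoheadrightarrow M^*\otimes_RM$ and the image of $L_1\otimes_RM\to L_0\otimes_RM$ is the kernel of that surjection, one gets $\Tor^R_{n+2}(\D_{n+1}M,M)\cong\ker\bigl(M^*\otimes_RM\to\Hom_R(F_0,M)\bigr)=\ker\mu$. Splicing these two identifications around $\mu$ gives the desired four-term exact sequence, with middle map $\mu$ as claimed.

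The main obstacle is the bookkeeping in the last paragraph: one must verify that the connecting map $M^*\otimes_RM\to F_0^*\otimes_RM$ built into the resolution $P_\bullet$ really is the canonical evaluation map $\mu$ up to the harmless inclusion $\Hom_R(M,M)\hookrightarrow\Hom_R(F_0,M)$, and that no spurious homology is contributed by the resolution $L_\bullet$ of $M^*$; one also has to check the small-$n$ boundary cases (e.g. $n=1$). Everything else is dimension shifting together with the two $\ext$-vanishing inputs.
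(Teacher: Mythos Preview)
The paper does not supply its own proof of this proposition; it simply quotes the result from Jorgensen \cite[Proposition 3.1(1)]{Jor} after adjusting notation. Your argument is correct and is essentially the standard one: the vanishing of $\Ext^i_R(M,R)$ for $1\le i\le n$ turns the dual complex $F_0^*\to\cdots\to F_{n+1}^*$ into the start of a free resolution of $\D_{n+1}M$, which, spliced with a resolution of $M^*$, lets you read off $\Tor^R_i(\D_{n+1}M,M)$ as $\Ext^{n+1-i}_R(M,M)$ in the range $1\le i\le n$ and as $\coker\mu$, $\ker\mu$ for $i=n+1,\,n+2$. The bookkeeping you flag (identifying the spliced map with $\mu$ followed by the injection $\Hom_R(M,M)\hookrightarrow\Hom_R(F_0,M)$, and checking no stray homology arises from the $L_\bullet$ tail) is routine and goes through exactly as you describe, including for $n=1$. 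One small remark: minimality of $F_\bullet$ is not actually used in your argument, which is just as well since the proposition is stated over an arbitrary commutative Noetherian ring.
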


\begin{facts}\label{fact:torsion} Here are some well-known facts concerning reflexive, maximal Cohen-Macaulay (MCM),  and torsion-free $R$-modules.  
Let $R$ be a local ring and  $M$  a non-zero $R$-module.
\begin{enumerate}[(i)]
    \item If $R$ is Gorenstein and $M$ is MCM, then $M$ is reflexive, and the dual module $M^*$ is also MCM.
    (These follow from the fact that $R$ is its own canonical module.  See \cite[Theorems 3.3.7 and 3.3.10(d)]{BH}.)
    \item  If $R$ is  $1$-dimensional and Cohen-Macaulay, then $M$ is MCM if and only if $M$ is torsion-free.
    \item If $R$ is Cohen-Macaulay and $n \ge \dim R$, then $\Omega^n_RM$ is MCM, by the Depth Lemma \cite[Lemma A.4]{LW}.
  \item Suppose $R$ is Cohen-Macaulay and  $\dim R \leq 2$.  If $M$ is reflexive, then $M$ is MCM, since, by 
    Equation \eqref{eq:A-trans},  $M =M^{**}$ is the second syzygy of $\D(M^*)$.
\end{enumerate}
 \end{facts}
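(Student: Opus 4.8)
The plan is to establish the four items separately, each by a standard tool: the Depth Lemma \cite[Lemma~A.4]{LW} (recalling that over a Cohen--Macaulay ring a free module has depth $\dim R$), the interpretation of torsion-freeness in terms of associated primes, and---for the Gorenstein statement (i)---the canonical module machinery of \cite{BH} specialized to $\omega_R=R$. Write $d=\dim R$; the cases $d=0$ are trivial, since over an Artinian local ring every nonzero module is MCM, so in (i), (iii) and (iv) I assume $d\ge 1$.

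For (iii), I would apply the Depth Lemma to the short exact sequences $0\to\Omega_R^{i+1}M\to F_i\to\Omega_R^iM\to 0$ and use $\depth F_i=\depth R=d$; an induction on $i$ gives $\depth\Omega_R^iM\ge\min\{d,i\}$, and as $\dim\Omega_R^nM\le d$ as well, $\Omega_R^nM$ is MCM once $n\ge d$. I would emphasize that this computation uses only that the free terms have depth $d$ and the non-free terms are submodules of free modules---not minimality of the resolution---since I want to reuse it for a non-minimal complex in (i). For (ii), since $d=1$, $M$ is MCM exactly when $\depth M=1$, i.e.\ $\m\notin\Ass_RM$; and $M$ is torsion-free exactly when $\Ass_RM\subseteq\operatorname{Min}R$, because the nonzerodivisors of $R$ are the elements avoiding all associated primes of $R$ and $\Ass R=\operatorname{Min}R$ (as $R$ is Cohen--Macaulay, hence has no embedded primes). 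Since $R$ is one-dimensional and local, $\operatorname{ht}\m=1$ and every other prime is minimal, so $\Spec R=\operatorname{Min}R\sqcup\{\m\}$ and therefore $\m\notin\Ass_RM\iff\Ass_RM\subseteq\operatorname{Min}R$; chaining these equivalences proves (ii).

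The heart of (i) is the vanishing $\Ext^i_R(M,R)=0$ for all $i>0$ when $M$ is MCM over a Gorenstein ring. I would obtain this either from local duality---$\Ext^{\,d-j}_R(M,R)\cong H^{\,j}_{\m}(M)^{\vee}=0$ for $j<\depth M=d$, together with the injective dimension of $R$ being $d$ for the range $i>d$---or, to avoid completeness hypotheses, by induction on $d$ using a nonzerodivisor $x$ on both $R$ and $M$ and the long exact $\Ext$-sequence of $0\to R\xrightarrow{\,x\,}R\to R/xR\to 0$ (base case: a Gorenstein Artinian ring is self-injective). Granting this, the $R$-dual of a minimal free resolution of $M$ is an exact complex $0\to M^*\to F_0^*\to F_1^*\to\cdots$; truncating it after $d$ free terms and running the iterated Depth Lemma as in (iii)---the free terms have depth $d$ and the final cokernel is a submodule of a free module, hence has depth $\ge 1$---gives $\depth M^*=d$, so $M^*$ is MCM. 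Applying this to $M^*$ shows $M^{**}$ is MCM, and $M\cong M^{**}$ follows from the duality of $\Hom_R(-,\omega_R)$ on MCM modules \cite[Theorems~3.3.7 and 3.3.10(d)]{BH} with $\omega_R=R$.

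Finally, for (iv), assume $d\le 2$ and $M$ reflexive, so $M\cong M^{**}$. Applying the transpose sequence \eqref{eq:A-trans} to $M^*$ gives an exact sequence $0\to M^{**}\to G_0^*\to G_1^*\to\D(M^*)\to 0$ with $G_0,G_1$ free; splitting off $C:=\operatorname{im}(G_0^*\to G_1^*)\subseteq G_1^*$ yields $0\to M^{**}\to G_0^*\to C\to 0$, and since $C$ embeds in a free module $\depth C\ge 1$, so the Depth Lemma gives $\depth M=\depth M^{**}\ge\min\{d,\depth C+1\}\ge\min\{d,2\}=d$; hence $M$ is MCM. The step I expect to need the most care is (i): one must not argue circularly---the reflexivity of $M$ and the MCM-ness of $M^*$ are exactly what the $\Ext$-vanishing (equivalently, the canonical module formalism) is there to provide---and one must observe that the syzygy-and-depth argument of (iii) is insensitive to non-minimality, so that it legitimately applies to the dualized complex. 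Items (ii)--(iv) are then routine bookkeeping with associated primes and the Depth Lemma.
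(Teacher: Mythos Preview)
Your proposal is correct and essentially matches the paper's treatment. The paper only proves item (ii) explicitly---via the same associated-prime idea you use, though phrased more concretely (choose a nonzerodivisor $f\in\m$ for one direction; for the converse, a zerodivisor on $M$ lies in some $\p\in\Ass M$, and $\p\ne\m$ forces $\p$ minimal, hence $r$ is a zerodivisor of $R$)---while for (i), (iii), (iv) the paper simply relies on the references and one-line justifications already built into the statement, for which you have supplied the standard Depth-Lemma and $\Ext$-vanishing details.
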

 
Since we have not found a proof of (ii) in the literature, we include one here:    
Since $R$ is CM, there is a non-zerodivisor $f\in \fm$.  If $M$ is torsion-free, then $f$ is a non-zerodivisor on $M$, and hence $\depth M \ge1$, that is, $M$ is MCM.  Conversely,  suppose $M$ is MCM, and let $r$ be a zero-divisor on $M$.  Then $r\in  \mathfrak{p}$ for some $\mathfrak{p}\in \Ass M$.  
Now $\mathfrak{p}\ne \fm$ since $M$ is MCM, and hence $\mathfrak{p}$ is a minimal prime ideal of $R$.  Therefore $r$ is a zero-divisor of $R$; this shows that $M$ is torsion-free.



\begin{remark}\label{hwcimp'}   The truth of (HWC), the local ring version of the Huneke-Wiegand Conjecture in  Definition~\ref{defhwc'},
 would imply  the truth of  (HWC$_d$), the integral domain version in  Definition~\ref{defarchwc}: Assume (HWC). By Proposition~\ref{ACT}, (HWC$_d$) reduces to the one-dimensional case.
 Fact~\ref{fact:torsion}, parts (i) and (iv), 
 implies that the non-zero reflexive modules  over a one-dimensional Gorenstein local ring are exactly the MCM modules. Thus
a Gorenstein local domain satisfying (HW) of Definition~\ref{defarhw} also satisfies (HW$_d$), the domain form of the  condition:
\begin{itemize}
\item[(HW$_d$)] If  $M$ is a  finitely generated torsion-free $R$-module such that $M\otimes_R M^*$ is reflexive, then  $M$  is a free module.
\end{itemize}
The original conjecture (HWC$_d$)  in Definition~\ref{defarchwc} is that every Gorenstein local domain satisfies (HW$_d$).
\end{remark}

\section{Vanishing of Ext and the Auslander-Reiten Conjecture\label{SecVE}}

The results in this section  can be compared with those in  Section 6 of     \cite{NT}. We begin by recalling the 
Auslander Condition (AC) and the
 Uniform Auslander Condition (UAC) on a ring $R$:

\begin{itemize}
\item[(AC)] For each  $R$-module $M$, there is a non-negative integer $b = b_M$ such that, 
for every $R$-module $N$, one has
\begin{equation}\label{eq:bound}  \Ext^i_R(M,N) = 0\ \forall i \gg 0 \implies \Ext^i_R(M,N)=0,\ ~\forall i \ge b\,.
\tag{\ref{SecVE}.0.0}
\end{equation}
\item[(UAC)] There is an integer $b\ge0$ such that \eqref{eq:bound} holds for every pair $M,N$ of $R$-modules.
\end{itemize}

\begin{nota/rem} \label{uabnot}
\noindent  (i) A number $b$  with the property required in (UAC) is called a {\em uniform Auslander bound}.

(ii) The smallest number $b$ with this property  is called the {\em Ext-index} of $R$ \cite{HJ}.  

(iii)
To our knowledge, 
it is unknown whether every local ring satisfying (AC) actually satisfies the stronger condition (UAC). 

(iv)  A Gorenstein local ring satisfying (UAC) is called an {\em AB ring} \cite{HJ}.  For a local  AB ring, the Ext-index is known to be equal to $\dim R$
 \cite[Proposition 3.1]{HJ}.  

(v) Modules $M$ and $N$ over an AB ring $R$ satisfy the
 following symmetry \cite[Theorem 4.1]{HJ}:
 \begin{equation}\label{eq:symm}
 \Ext_R^i(M,N) = 0, \ \forall i\gg 0 \iff   \Ext_R^i(N,M) = 0, \  ~\forall i\gg 0\,.
 \tag{\ref{uabnot}.0}
 \end{equation}

(vi) Auslander conjectured \cite[p. 795]{aus-coll} that finite-dimensional modules over a  finite-dimensional $k$-algebra satisfy (AC).  
This conjecture was disproved by Jorgensen and \c Sega \cite{JS}.  Their counterexample is a finite-dimen-\\
sional {\em commutative} 
Gorenstein $k$-algebra, where $k$ can be taken to be any field that is not algebraic over a finite field.
\end{nota/rem}

\begin{lemma}\label{lem:pdid}  Let $R$ be a quasi-fiber product ring  with respect to an $R$-sequence of length $n$. If $M$ and $N$ are $R$-modules
such that  $\pd_RM<\infty$ or $\id_RN<\infty$,
 then  $\Ext_R^i(M,N)=0$, for every $i>n+1$. 
\end{lemma}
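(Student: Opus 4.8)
The plan is to discard the homological hypotheses in favor of a single inequality, namely $\depth R\le n+1$, and then read off the vanishing of $\Ext$ from the standard local-algebra formulas for projective and injective dimension. The key observation is already recorded as Remark~\ref{rem:size}: since $R$ is a quasi-fiber product ring with respect to an $R$-sequence $\underline x=x_1,\dots,x_n$, one has $n\in\{\depth R-1,\depth R\}$, hence $\depth R\le n+1$. (The point is that $\depth\bigl(R/(\underline x)\bigr)=\depth R-n$ because $\underline x$ is a regular sequence, while $\depth\bigl(R/(\underline x)\bigr)\le 1$ by Fact~\ref{pdfin1}(1) since $R/(\underline x)$ is a fiber product ring.)

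With that in hand I would split into the two cases. If $\pd_RM<\infty$, I may assume $M\ne 0$, and the Auslander--Buchsbaum Formula (Remark~\ref{rem:ABF}) gives $\pd_RM=\depth R-\depth M\le\depth R\le n+1$; since $\Ext^i_R(M,-)$ vanishes for every $i>\pd_RM$, it vanishes in particular for every $i>n+1$, for the given $N$. If instead $\id_RN<\infty$, I may assume $N\ne 0$, and the Bass formula (the injective analogue of Auslander--Buchsbaum: over a local ring, a nonzero finitely generated module of finite injective dimension has injective dimension equal to $\depth R$) gives $\id_RN=\depth R\le n+1$; since $\Ext^i_R(-,N)$ vanishes for every $i>\id_RN$, it vanishes for every $i>n+1$, for the given $M$. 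In both cases $\Ext^i_R(M,N)=0$ for all $i>n+1$, as claimed.

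I do not expect a real obstacle here: essentially all of the content sits inside Remark~\ref{rem:size}, and the rest is bookkeeping with the Auslander--Buchsbaum and Bass equalities. The only ingredient not proved in the excerpt is the Bass equality $\id_RN=\depth R$; if one wished to keep the argument self-contained one could instead run a change-of-rings induction along $\underline x$ to transfer the $\id<\infty$ case down to the fiber product ring $R/(\underline x)$ and invoke the depth bound there, but that is a longer detour to the same conclusion.
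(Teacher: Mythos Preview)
Your argument is correct and matches the paper's proof essentially verbatim: both invoke Remark~\ref{rem:size} to bound $\depth R\le n+1$, then apply the Auslander--Buchsbaum Formula in the $\pd_RM<\infty$ case and the Bass Formula in the $\id_RN<\infty$ case to conclude the vanishing of $\Ext$.
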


\begin{proof}
 By  
  Remark~\ref{rem:size},  $\depth R\leq n+1$ . The Auslander-Buchsbaum Formula
(Remark \ref{rem:ABF}) and Bass Formula \cite[Theorem 3.1.17]{BH}) show that 
$\pd_RM\leq \depth R \leq n+1$ or $\id_R N=\depth R\leq n+1$. 
In either case, $\Ext_R^i(M,N)=0$, for every $i$ with $i>  n+1$.
\end{proof}

\begin{thm}\label{thm:uac}  If $R$ is a quasi-fiber product ring  with respect to an $R$-sequence of length $n$, then the Ext-index of $R$ is at most $n+2$.  In particular,
every  quasi-fiber product ring satisfies (UAC).  If, further, $R$ is Gorenstein, then $R$ is an AB ring.
\end{thm}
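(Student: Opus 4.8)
The plan is to deduce the whole statement from the homological rigidity of $R$: \emph{if $M$ and $N$ are finitely generated $R$-modules with $\Ext^i_R(M,N)=0$ for all $i\gg 0$, then $\pd_RM<\infty$ or $\id_RN<\infty$} $(\ast)$. Granting $(\ast)$, Lemma~\ref{lem:pdid} shows that in either case $\Ext^i_R(M,N)=0$ for every $i>n+1$; hence $n+2$ is a uniform Auslander bound, so $R$ satisfies (UAC), and since the Ext-index is by definition the least uniform Auslander bound (\ref{uabnot}(ii)) it is at most $n+2$. The last assertion is then immediate, since by definition (\ref{uabnot}(iv)) a Gorenstein local ring satisfying (UAC) is an AB ring. (In the Gorenstein case one can moreover pin the Ext-index down exactly: it equals $\dim R=\depth R=n+1$, by \ref{uabnot}(iv) and Remark~\ref{rem:size}.)

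\textbf{Proving $(\ast)$.} I would reduce to the fiber product case. By Lemma~\ref{lem:syz}, $\underline x$ is a regular sequence on $M':=\Omega^n_RM$; since $\Ext^{i}_R(M',N)\cong\Ext^{i+n}_R(M,N)$ for $i\ge 1$ and $\pd_RM<\infty\iff\pd_RM'<\infty$, we may replace $M$ by $M'$ and assume $\underline x$ is regular on $R$ and on $M$. A diagram chase through the short exact sequences $0\to L\xrightarrow{x_j}L\to L/x_jL\to 0$, applied to the successive quotients $L=M/(x_1,\dots,x_{j-1})M$, together with Nakayama's lemma, shows that $\Ext^\bullet_R(M,N)=0$ eventually \emph{if and only if} $\Ext^\bullet_R(\overline M,N)=0$ eventually, where $\overline M:=M/\underline xM$ is a module over the non-trivial fiber product ring $\bar R:=R/(\underline x)$; moreover $\pd_RM<\infty\iff\pd_{\bar R}\overline M<\infty$. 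Over $\bar R$, rigidity is available in the literature (see \cite{NS}, \cite{NT}, together with the structure of high syzygies over fiber products): eventual vanishing of $\Ext^\bullet_{\bar R}(\overline M,-)$ forces $\pd_{\bar R}\overline M<\infty$ or finite injective dimension of the second argument, and with $\depth\bar R\le 1$ (Fact~\ref{pdfin1}) and the Bass formula this even yields Ext-index $\bar R\le 2$. To transfer the second argument, which is an $R$-module $N$, down to $\bar R$-modules one uses the change-of-rings spectral sequence whose coefficient modules are the Koszul cohomology modules $H^q(\underline x;N)$ ($0\le q\le n$), all of which are killed by $\underline x$ and hence are $\bar R$-modules. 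This is precisely the content of the relevant results of \cite[\S6]{NT} over the quasi-fiber product ring $R$, which I would invoke, or re-derive along these lines.

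\textbf{Main obstacle.} The delicate point is controlling the second variable. If $\underline x$ happens to be a regular sequence on $N$, the reduction is clean: reduce $N$ modulo $\underline x$ by the same short exact sequences and use $\id_RN=\id_{\bar R}(N/\underline xN)+n$. In general, though, $\underline x$ may consist of zerodivisors on $N$, and one must rule out the possibility that the change-of-rings spectral sequence cancels the nonvanishing that rigidity over $\bar R$ forces on the Koszul cohomology of $N$. Equivalently, one needs rigidity over the fiber product $\bar R$ in the sharp form ``Ext-index $\le 2$'', so that whenever $\pd_{\bar R}X=\infty$ and $\id_{\bar R}Y=\infty$ the group $\Ext^i_{\bar R}(X,Y)$ is nonzero for a cofinite set of $i$. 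Establishing, and correctly exploiting, this sharp fiber-product rigidity is the technical heart of the argument.
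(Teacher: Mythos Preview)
Your proposal is correct and follows exactly the paper's approach: establish $(\ast)$ (eventual Ext-vanishing forces $\pd_RM<\infty$ or $\id_RN<\infty$), then apply Lemma~\ref{lem:pdid} to get the bound $n+2$, and conclude the last assertion from the definition of AB ring. The paper simply cites \cite[Corollary 6.8]{NT} for $(\ast)$ rather than re-deriving it, so your reduction sketch and the ``main obstacle'' you worry about are unnecessary here---that work is already done in \cite{NT}.
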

\begin{proof}
Assume that $\Ext_R^i(M,N)=0$ for all $i\gg 0$.  Then \cite[Corollary 6.8]{NT} asserts that  
  $\pd_RM<\infty$ or $\id_RN<\infty$. 
 By Lemma~\ref{lem:pdid}, $\Ext_R^i(M,N)=0$, for every $i$ with $i>  n+1$.
 Therefore $R$ satisfies (UAC).  

The last statement is clear  by definition; an AB ring is a Gorenstein local ring satisfying (UAC).
\end{proof}

\begin{definition}\label{gapdef} Let $M$ and $N$ be $R$-modules, and
let $g$ and $m$ be positive integers.  We say that $\Ext_R(M,N)$ has a {\it gap of length} $g$ 
with {\it lower bound} $m$ if
$$
\aligned \Ext^i_R(M,N)&\neq 0,\text{ for }i=m-1\text{ and for }i=m+g;\text{ and }\\
\Ext^i_R(M,N) &=0,\text{ whenever }m\leq i\leq m+g-1.\endaligned
$$
Then $(m,g)$ is called a { \it gap pair} for $\Ext_R(M,N)$.
Set
 \begin{equation*}\aligned
\Ext\text{-}\operatorname{gap}_R(,N)&:={\rm sup}\{g \mid \Ext(M,N)\  {\rm has \ a \ gap \ of \ length} \ g\}; \text{ and}\\
\Ext\text{-}\operatorname{gap}(R)&:={\rm sup}\{\Ext\text{-}\operatorname{gap}_R(M,N)~|~M\text{ and }N\text{ are} 
\ R\text{-modules}\}.
\endaligned
\end{equation*}
\end{definition}

\begin{example} \label{gapexmp} In Example~\ref{example1}, where $k$ is a field, $R$ is the fiber product ring $R=k[[x,y]]/(xy)$, and $M=R/(y)$, we show $\ext^i_R(M,M)=0$ for every odd $i>0$ and $\ext^i_R(M,M)=0$, for every even $i>0$. Thus every odd positive integer $m$ is a lower bound for a gap of length $1$; the set of gap pairs for $\ext_R(M,M)$ is $\{(m,1)~|~m$ is an odd integer\}.
\end{example}
\begin{rems}\label{rem:classNT}
Let $R$ be a quasi-fiber product ring,  let $M$ and $N$ be finitely generated $R$-modules, and let $n$ be the length of an $R$-sequence $\underline x$ such that $R/(\underline x)$ is a fiber product ring. 
With  this setting, Nasseh and Takahashi give these  interesting and useful results related to ``Tor-gaps", Question~\ref{question1} and Ext-gaps  in \cite{NT}:
 
(1) \cite[Corollary 6.5]{NT} 
If there exists an integer $t$ with $t\ge \max\{5, n+1\}$ such that  $ \Tor^R_i(M,N) =0$, for every $i$ with $t+n\le i \le t+n+\depth R$, then   
 $\pd_RM<\infty$ or $\pd_RN<\infty$.

(2) \cite[Corollary 6.6]{NT} Assume  $R$ is a $d$-dimensional Cohen-Macaulay ring, and  set $s:=d-\depth M$. If there exists an integer $t$ such that  $t\ge 5$ 
and $ \Ext^i_R(M,N) =0$, for every $i$ with $t+s\le i \le t+s+d$, then   
 $\pd_RM<\infty$ or $\id_RN<\infty$.
 
 (2$'$) (Restating (2)) If $R$ is a $d$-dimensional Cohen-Macaulay ring 
 such that both  $\pd_RM$ and $\id_RN$ are infinite,  if $s:=d-\depth M$, and if $t\ge 5$, 
  then there exists an integer $j$ with    $ \Ext^j_R(M,N) \ne 0$ and $t+s\le j \le t+s+d$.
  
 \end{rems}

 In \cite{HJ}, a local ring is called 
 {\it Ext-bounded} if its Ext-gap is finite.  It seems to be unknown whether every AB ring 
 is  Ext-bounded \cite[\S6, Question 4]{HJ}.  Proposition~\ref{cor:class} shows
  that quasi-fiber product rings  would not be a good place to look
for a counterexample. If  $\pd_RM<\infty$ or $\id_RN<\infty$, the gap is at most the dimension.
  For convenience we give bounds on the size of the gaps associated with
 various values of the lower bound $m$ if  $\pd_RM$ and $\id_RN$ are both infinite. That is, 
 we give restrictions on the possible gap pairs having  various  values for $m$.

\begin{prop}\label{cor:class}
Let $R$ be a  Cohen-Macaulay quasi-fiber product ring of dimension $d$, let $m$ and $g$ be positive integers, 
and let $M$ and $N$ be finitely generated R-modules such that $ \Ext_R(M,N)$ has a gap of length $g$ with lower bound $m$.
Set $s: = d-\depth M$. Then: 

 \begin{enumerate}
\item [$(1)$] $g\le 2d+4$. Thus the {$\Extgap$} of $R$ is at most $2d+4$, and hence $R$ is $\Ext$-bounded.
\item [$(2)$]If $\pd_RM$ or $\id_RN$ is finite, then $g\le d$ and $\Extgap_R(M,N)\le d$.
\item [$(3)$]If $m\ge s+5$, then $g\le d$.
\item [$(4)$]If $s\le m\le s+4$, then $g\le d+5-(m-s)$.  (Thus, for example, $m=s+4\implies g\le d+1$, 
and $m=s\implies
g\le d+5$.)
\item [$(5)$]If $m\ge 5$, then   $g\le s+d$, so  $g\le 2d$.
\item [$(6)$]If $0< m\le 4$, then  $g\le s+d+(5-m)$, so $g\le 2d+(5-m)$; e.g. $m=1\implies g\le 2d+4.$
\item [$(7)$] $(i)$ If $s=0$ (that is, $\dim R=\depth M$) and  $m\ge 5$, then   $g\le d$. \\
$(ii)$ If $s=0$ and  $0<m< 5$, then $g\le d+(5-m)$. 
\item [$(8)$] If $d=0$, then $m< 5$.
\end{enumerate}
 \end{prop}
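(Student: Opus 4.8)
The whole proposition runs on two earlier inputs: Lemma~\ref{lem:pdid} and Remarks~\ref{rem:classNT}(2$'$), the latter being \cite[Corollary 6.6]{NT}. Since $R$ is Cohen--Macaulay we have $\depth R = d$, so Remark~\ref{rem:size} gives $n \le d$ for the length $n$ of a regular sequence $\underline x$ with $R/(\underline x)$ a fiber product ring. The plan is to first dispose of the case where one of $\pd_RM$, $\id_RN$ is finite, and then, in the remaining ``doubly infinite'' case, run a single interval-counting argument, choosing the free parameter $t$ of Remarks~\ref{rem:classNT}(2$'$) to fit each range of the lower bound $m$.

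\emph{The finite case.} If $\pd_RM < \infty$ or $\id_RN < \infty$, then Lemma~\ref{lem:pdid} gives $\Ext^i_R(M,N) = 0$ for $i > n+1$, hence for $i > d+1$. A gap pair $(m,g)$ forces $\Ext^{m+g}_R(M,N) \neq 0$, so $m+g \le d+1$; as $m \ge 1$ this yields $g \le d$, whence $\Extgap_R(M,N) \le d$ by taking the supremum. This is (2); moreover, since each of the bounds asserted in (1) and (3)--(7) is $\ge d$ on the relevant range of $m$, the inequality $g \le d$ just obtained implies all of them here, and for (8) we note there is no gap at all when $d=0$ in this case (then $\Ext^i_R(M,N)=0$ for $i \ge 2$), so $m<5$ holds vacuously. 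From now on assume $\pd_RM = \id_RN = \infty$; in particular $M \neq 0$, so $0 \le s := d - \depth M \le d$.

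\emph{The doubly infinite case.} A gap of length $g$ with lower bound $m$ means $\Ext^i_R(M,N) = 0$ for every $i \in [m,\, m+g-1]$. Fix any integer $t$ with $t \ge 5$ and $t+s \ge m$; Remarks~\ref{rem:classNT}(2$'$) produces $j$ with $\Ext^j_R(M,N) \neq 0$ and $t+s \le j \le t+s+d$. Then $j \ge m$ yet $j$ lies outside the gap, so $j \ge m+g$, and together with $j \le t+s+d$ we get $g \le t+s+d-m$. Optimizing $t$ over the allowed range now yields the remaining inequalities: taking $t = m-s$ when $m \ge s+5$ gives (3) ($g \le d$); taking $t = 5$ when $s \le m \le s+4$ gives (4) ($g \le d+5-(m-s)$); taking $t = m$ when $m \ge 5$ gives (5) ($g \le s+d \le 2d$); and taking $t = 5$ when $0 < m \le 4$ gives (6) ($g \le s+d+(5-m) \le 2d+(5-m)$). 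Part (7) is the specialization $s=0$ of (5) and (6). For (1), (5) and (6) together give $g \le 2d+4$ for every gap (the extreme case being $m=1$), so $\Extgap(R) \le 2d+4 < \infty$ and $R$ is Ext-bounded. For (8), if $d=0$ then $s=0$, so a gap with $m \ge 5$ would give $g \le d = 0$ by (3), impossible; hence $m < 5$.

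I do not expect a genuine obstacle: all the substance sits in \cite[Corollary 6.6]{NT} and Lemma~\ref{lem:pdid}. The care is purely in the bookkeeping --- selecting the best $t$ for each $m$-range, keeping in mind that a gap of length $g$ is a block of $g$ consecutive vanishing degrees (so that the contradiction requires the $(d+1)$-element block $[t+s,\, t+s+d]$ to sit inside it), and checking the boundary situations ($m$ small, $d=0$, one homological dimension finite, and $M$ or $N$ zero so that vacuously there are no gaps) so that each of the eight statements holds without exception.
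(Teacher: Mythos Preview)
Your proof is correct and follows essentially the same approach as the paper: both rely on Lemma~\ref{lem:pdid} for the finite case and on Remarks~\ref{rem:classNT}(2$'$) in the doubly infinite case, choosing the same values of $t$ for each range of $m$. Your presentation is slightly more streamlined in that you extract the single inequality $g \le t+s+d-m$ (valid whenever $t\ge 5$ and $t+s\ge m$) and then optimize $t$, whereas the paper carries out each case separately; but the content is identical.
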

\begin{proof}  
By Definition~\ref{gapdef}, we have, for every $i$ with  $m \le i \le m+g-1, $
\begin{equation}\label{eq:Ext-van}
\Ext_R^i(M,N) = 0 \text{ and }   \Ext^{m+g}_R(M,N)\ne 0\,.\tag{\ref{cor:class}.a}
\end{equation}
Let $n$ be the length  of a regular sequence $\underline{x}$ such that $R/(\underline{x})$
is a fiber product ring. 
Remark~\ref{rem:size} and the fact that $R$ is Cohen-Macaulay yield
\begin{equation}\label{eq:depth-bound}
d = \depth R \in \{n,n+1\}\,. \tag{\ref{cor:class}.b}
\end {equation}

 Part (1)  follows from parts (2), (5) and (6). 

  For part (2), Lemma~\ref{lem:pdid}  
implies that
 $\Ext^i_R(M,N) = 0$ for every $i> n+1$, and hence  $m+g\le n+1$; that is, $g\le n - (m-1) \le n$.  Since
$n\le d$ by \eqref{eq:depth-bound}, we have $g\le d$.  This proves item (2).  

For the remainder of the proof, assume  $\pd_RM= \infty$ and $\id_RN = \infty$.  Then, 
by Remark~\ref{rem:classNT} (2$'$), for every $t\ge 5$, there is an integer $j$ satisfying
\begin{equation}\label{eq:short-gap}
t+s\le j \le t+s+d \text{ and  }\Ext^j_R(M,N) \ne 0\,.\tag{\ref{cor:class}.c}
\end{equation}

To prove (3), 
assume that $m\ge 5+s$. 
Let $t:= m-s \ge 5$. By  \eqref{eq:short-gap}, there exists $j$ with
\begin{equation*}
m=t+s \le j\le t+s+d = m+d\, \text{ and }\Ext^j_R(M,N) \ne 0.
\end{equation*}
By ({\ref{cor:class}.a}), $g-1<d$. Thus $g\le d$.

To prove (4), 
assume that $m= a+s$ for $0\le a\le 4$.   
Let $t:= 5$. By \eqref{eq:short-gap}, there exists $j$ with
\begin{equation*}
a+s=m<5+s \le j\le 5+s+d = (5-a) +(a+s)+d=m+(5-a)+d\,
\end{equation*}
{ and }$\Ext^j_R(M,N) \ne 0.$
By ({\ref{cor:class}.a}), $g-1<(5-a)+d$. Thus $g\le d+5-a$.

For (5), let $t:=m$. By  \eqref{eq:short-gap}, there exists a positive integer $j$ with

\begin{equation*}
m+s \le j\le m+s+d, \text{ and }\Ext^j_R(M,N) \ne 0\,.
\end{equation*}
Definition~\ref{gapdef} implies that $m+g-1 < m+s+d$. Now $g\le s+d\le 2d$, since $0\le s\le d$.

For (6), let $t:=5$ and $a:=5-m$. By  \eqref{eq:short-gap}, there exists $j$ with
\begin{equation*}
a+m+s=5+s \le j\le 5+s+d=a+m+s+d, \text{ and }\Ext^j_R(M,N) \ne 0\,.
\end{equation*}
By Definition~\ref{gapdef}, $j> m+g-1$.  Therefore $m+g-1 < a+m+s+d$, and hence $g\le a +s+d \le a+2d$.

Observe  that $g\le 2d+4$ by parts (2), (5) and (6). Thus (1) holds.

For (7)(i), use (5),  $ g\le s+d=d$, and, for  (7)(ii), use  (6), $$g\le ( 5-m)+s+d=(5-m) +d.$$

For (8), if $d=0$, then also $s=0$. If $m\ge 5$, then, by  \eqref{eq:short-gap}, there exists $j$ with
\begin{equation*}
\aligned
m+s \le j\le m+s+d, \text{ and }\Ext^j_R(M,N) \ne 0.
\endaligned\end{equation*}
That is, $m\le j \le m\implies j=m\implies \Ext^m_R(M,N) \ne 0$. This contradicts ({\ref{cor:class}.a}). Thus $m<5$.
\end{proof}

The Auslander-Reiten Conjecture (ARC) is the case $n=1$ of the Generalized Auslander-Reiten Conjecture (GARC): 

\begin{enumerate}
 \item[] (GARC)\ \
$ \text{If}\ \Ext^i_R(M,M\oplus R)=0 \text{ for all } i\geq n, \text{ then }  \pd_RM< n\,.$  
\end{enumerate}

\begin{cor}\label{cor:arc} Every  quasi-fiber product ring satisfies {\rm (GARC)}, and so also {\rm (ARC)}.
\end{cor}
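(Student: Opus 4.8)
The plan is to prove that every quasi-fiber product ring $R$ satisfies (GARC); then (ARC) drops out as the case $n=1$. So I would fix an integer $n\ge 1$ and a nonzero finitely generated $R$-module $M$ with $\Ext^i_R(M,M\oplus R)=0$ for all $i\ge n$, and aim to show $\pd_RM<n$ (the case $M=0$ being trivial). The argument has two steps: first show $\pd_RM<\infty$, and then use the vanishing of $\Ext^i_R(M,R)$ to bound $\pd_RM$ by $n-1$.

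For the first step: since $\Ext^i_R(M,M)=0$ for all $i\gg 0$, the Nasseh--Takahashi dichotomy \cite[Corollary 6.8]{NT} --- the result invoked in the proof of Theorem~\ref{thm:uac} --- gives $\pd_RM<\infty$ or $\id_RM<\infty$. If $\pd_RM<\infty$, this step is complete. If instead $\id_RM<\infty$, I would apply \cite[Corollary 6.8]{NT} a second time, to the pair $(M,R)$: since $\Ext^i_R(M,R)=0$ for all $i\gg 0$, we get $\pd_RM<\infty$ or $\id_RR<\infty$. The latter means $R$ is Gorenstein, and over a Gorenstein local ring a finitely generated module of finite injective dimension has finite projective dimension (a standard fact; cf.\ \cite{BH}), so $\pd_RM<\infty$ in that case too. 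Hence $\pd_RM<\infty$ in all cases.

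For the second step, put $p:=\pd_RM$. If $p=0$ then $M$ is free and $p<n$ since $n\ge 1$. If $p\ge 1$, I would invoke the standard fact that $\pd_RM=\sup\{i:\Ext^i_R(M,R)\ne 0\}$ whenever $\pd_RM<\infty$: the functor $\Ext^p_R(M,-)$ is the top cohomology of $\Hom_R(F_\bullet,-)$ for a minimal free resolution $F_\bullet$ of $M$ of length $p$, so it is right exact and would vanish identically were it to vanish on $R$, contradicting $\pd_RM=p$; thus $\Ext^p_R(M,R)\ne 0$. Since $\Ext^i_R(M,R)=0$ for all $i\ge n$, this forces $p\le n-1$, i.e.\ $\pd_RM<n$, which is (GARC). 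Finally, taking $n=1$, the hypothesis $\Ext^i_R(M,M\oplus R)=0$ for all $i\ge 1$ gives $\pd_RM<1$, hence $\pd_RM=0$ and $M$ is free; this is (ARC).

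I expect the only genuine content to lie in the first step, which is a double use of the Nasseh--Takahashi finiteness dichotomy for quasi-fiber product rings together with the standard Gorenstein fact that finite injective dimension forces finite projective dimension; the second step and the deduction of (ARC) are purely formal, and in particular nothing as delicate as the bounds in Theorem~\ref{thm:arcminimal} is required here.
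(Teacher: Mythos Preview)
Your proof is correct, but it takes a different route from the paper's. The paper argues abstractly: Theorem~\ref{thm:uac} shows that $R$ satisfies (UAC), hence the finitistic extension degree $\operatorname{fed}(R)$ is finite, and then Diveris's theorem \cite[Corollary~2.12]{diveris} gives (GARC) directly. Your argument bypasses both the (UAC) formalism and Diveris's result, going back to the same underlying input \cite[Corollary~6.8]{NT} and applying it twice (once to $\Ext_R(M,M)$, once to $\Ext_R(M,R)$), with the Gorenstein equivalence $\id_RM<\infty\Leftrightarrow\pd_RM<\infty$ to close the remaining case; the bound $\pd_RM<n$ then follows from the elementary fact that $\Ext^{\pd_RM}_R(M,R)\ne 0$ when $\pd_RM<\infty$. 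Your approach is more self-contained (no appeal to Diveris) and makes the mechanism transparent, while the paper's approach isolates a reusable structural property ((UAC), hence finite $\operatorname{fed}$) from which (GARC) is a formal consequence.
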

\begin{proof} Diveris \cite{diveris} introduced the notion of ``finitistic extension degree'' fed$(A)$, for a left Noetherian ring $A$: 
\begin{equation*}
\text{fed}(A):=\sup\{\sup\{i~|~\ext_A^i(M,M)
\ne 0\}\},\end{equation*}
where the outside sup is over all finitely generated left $A$-modules $M$. Diveris proved 
\cite[Corollary 2.12]{diveris} that (GARC) holds for  $A$
if fed$(A) < \infty$. 
By Theorem \ref{thm:uac}, every quasi-fiber product ring $R$ satisfies (UAC),  which certainly implies fed$(R)$ is finite. Thus  (GARC) holds for  quasi-fiber product rings.  
\end{proof}

 It also follows from Nasseh and Sather-Wagstaff's theorem \cite[Theorem 4.5]{NS}
that every quasi-fiber product ring $R$ satisfies (ARC) -- the case $n=1$ of (GARC).
 They show
 that fiber product rings (i.e. with 
{\em decomposable} maximal ideal)
satisfy (ARC).  By Celikbas'  theorem \cite[Theorem 4.5 (1)]{CeT}, the (AR) property
lifts modulo a regular sequence. 

\begin{discussion}\label{AsTr}{\bf Auslander sequence  and Tor-rigidity.}
We introduce  an important tool concerning the Auslander transpose 
(see Definition \ref{defn:A-trans}).  

(i) From \cite[Theorem 2.8 (b)]{AuBr} or \cite[(1.1.1)]{Jor},  we have, for each $i\ge0$, the following
 exact
  sequence:
\begin{equation*}\label{key}\begin{gathered} {\Tor_2^R(\D\Omega_R^iM,N) \to \Ext_R^i(M,R)\otimes_R N}\\
{\phantom{extext}\to \Ext_R^i(M,N) \to \Tor_1^R(\D\Omega_R^iM,N) \to 0.}\end{gathered}
\tag{\ref{AsTr}.1}\end{equation*}
The sequence ({\ref{AsTr}.1}) is 
known as the \emph{Auslander sequence}.  


(ii) An $R$-module $M$ is said to be {\it Tor-rigid} provided that the following holds for every $R$-module $N$ and
every $n\ge 1$ (see \cite{aus}):
$$
\Tor_n^R(M,N)=0 \implies  
\Tor_{n+1}^R(M,N)=0.
$$

(iii) Let $M$ be an $R$-module and let $h:M\to M^{**}$ be the canonical map.  
Recall that $M$ is {\em torsionless} provided $h$ is injective, and {\em reflexive} if $h$ is bijective.  By mapping a free module 
onto $M^*$ and dualizing, we see that $M^{**}$ embeds in a free module.  It follows that torsionless modules are torsion-free.
 From \cite[Exercise 1.4.21]{BH} we obtain an exact sequence
$$
0\to \Ext^1_R(\D M,R) \to M\overset{h}{\to}M^{**} \to \Ext^2_R(\D M,R) \to 0\,.
$$
Thus 
$M$ is torsionless if and only if $\Ext^1_R(\D M,R) = 0$, and reflexive
 if and only if $\Ext^1_R(\D M,R)$ and 
$\Ext^2_R(\D M,R)$ are both zero.

\end{discussion}

We use   Auslander's Theorem~\ref{austordep} and Lemma~\ref{jdextdep}, due to Jothilingam and Duraivel:
\begin{theorem} \label{austordep} \cite[Theorem 1.2]{aus} Let $R$ be a local ring, and  let $M$  and $N$ be nonzero $R$-modules such that  $\pd M<\infty$. Let $q$ be the largest integer such that $\tor_q^R(M,N)\ne 0$. If  $\depth (\tor_q^R(M,N))\le 1$ or $q=0$, then $\depth N=\depth(\tor_q^R(M,N)) +\pd M-q$.
\end{theorem}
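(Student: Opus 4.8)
The plan is to deduce the theorem from the \emph{Auslander depth formula}, which I would establish first as a lemma, and which in turn rests on minimality of free resolutions.

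\textbf{Step 1 (depth formula).} First I would prove: if $X$ and $N$ are nonzero finitely generated $R$-modules with $\pd_R X = \pi < \infty$ and $\Tor_i^R(X,N)=0$ for all $i\ge 1$, then $\depth N=\depth(X\otimes_R N)+\pi$. The argument is induction on $\pi$. When $\pi=0$ this is clear, since $X$ is free and $X\otimes_R N$ is a direct sum of copies of $N$. When $\pi\ge 1$, take the short exact sequence $0\to X'\to F\to X\to 0$ coming from a minimal free resolution of $X$; then $\pd_R X'=\pi-1$ and $\Tor_i^R(X',N)\cong\Tor_{i+1}^R(X,N)=0$ for $i\ge 1$, and since $\Tor_1^R(X,N)=0$ the sequence $0\to X'\otimes N\to F\otimes N\to X\otimes N\to 0$ is exact. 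The inductive hypothesis gives $\depth(X'\otimes N)=\depth N-(\pi-1)$, so it remains to see that $\depth(X\otimes N)=\depth(X'\otimes N)-1$. For $\pi\ge 2$ one has $\depth(X'\otimes N)\le\depth N-1$, and two applications of the Depth Lemma \cite[Lemma A.4]{LW} to the displayed sequence force the relevant minima onto the term $\depth(X'\otimes N)-1$, giving the equality.

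The essential point --- and the step I expect to be the main obstacle --- is the case $\pi=1$, where the Depth Lemma yields only $\depth(X\otimes N)\ge\depth N-1$ and strictness must be argued separately. Here $X'$ is free, say $X'\cong R^{c}$ with $c\ge 1$, and the sequence reads $0\to N^{c}\xrightarrow{\ \beta\ }F\otimes N\to X\otimes N\to 0$, where, by minimality, $\beta$ is given by a matrix all of whose entries lie in $\fm$. In particular $\depth N\ge 1$, since otherwise $\beta$ would vanish on the nonzero socle of $N^{c}$. Put $d=\depth N$. Applying $\Ext_R^\bullet(k,-)$ to the sequence, one uses that each $\Ext_R^j(k,-)$ is a $k$-vector space, so that multiplication by any element of $\fm$ --- hence the induced map $\Ext_R^j(k,\beta)$ --- is zero for every $j$. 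The long exact sequence then degenerates and gives $\Ext_R^{d-1}(k,X\otimes N)\cong\Ext_R^{d}(k,N)^{c}\ne 0$ together with $\Ext_R^j(k,X\otimes N)=0$ for $j<d-1$. Since $\depth(\cdot)$ is the least $j$ with $\Ext_R^j(k,\cdot)\ne 0$, this yields $\depth(X\otimes N)=d-1$ and completes Step 1.

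\textbf{Step 2 (reduction).} Now let $p=\pd_R M$, let $q$ and $T=\Tor_q^R(M,N)$ be as in the statement, and set $L=\Omega_R^q M$, the $q$-th syzygy in a minimal free resolution of $M$. Then $\pd_R L=p-q<\infty$ and $\Tor_i^R(L,N)\cong\Tor_{i+q}^R(M,N)=0$ for all $i\ge 1$, so Step 1 gives $\depth N=\depth(L\otimes N)+(p-q)$; thus the theorem is equivalent to the equality $\depth(L\otimes N)=\depth T$. If $q=0$ then $L=M$ and $T=M\otimes N$, and there is nothing to prove; so assume $q\ge 1$, in which case $\depth T\le 1$ by hypothesis. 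Tensoring the exact sequence $0\to L\to F_{q-1}\to\Omega_R^{q-1}M\to 0$ with $N$, and using $\Tor_1^R(\Omega_R^{q-1}M,N)\cong T$ and $\Tor_1^R(F_{q-1},N)=0$, produces an exact sequence $0\to T\to L\otimes N\to K\to 0$ in which $K$ embeds in $F_{q-1}\otimes N\cong N^{b_{q-1}}$. If $\depth T=0$, then $T$ is a nonzero submodule of $L\otimes N$ with $\fm\in\Ass T$, so $\depth(L\otimes N)=0=\depth T$. If $\depth T=1$, I would first rule out $\depth K=0$: that would give $\fm\in\Ass K\subseteq\Ass N$, hence $\depth N=0$; Step 1 would then force $\depth(L\otimes N)=0$ and $p=q$, so $L=F_p$ is free and $T=\Tor_p^R(M,N)=\ker(d_p\otimes 1_N)$ contains the nonzero socle of $N^{b_p}$ (because $d_p$ has entries in $\fm$), contradicting $\depth T=1$. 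Hence $\depth K\ge 1$, and the Depth Lemma applied to $0\to T\to L\otimes N\to K\to 0$ forces $\depth(L\otimes N)=1=\depth T$, which finishes the proof. It is worth recording where the hypotheses enter: finiteness of $\pd_R M$ makes Step 1 available for $L$; the bound $\depth T\le 1$ (when $q\ge 1$) is precisely what drives the two-case split in Step 2; and minimality of the free resolutions is essential both for the $\Ext$-collapse in Step 1 and for the socle argument in Step 2.
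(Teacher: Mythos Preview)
Your proof is correct. The paper does not give its own proof of this statement; it is quoted from Auslander's 1961 paper \cite{aus} and used as a black box, so there is no argument in the paper to compare against.

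For what it is worth, your approach is essentially Auslander's original one: first establish the depth formula in the Tor-vanishing case by induction on projective dimension, with the base step $\pi=1$ handled via the minimality of the resolution (so that the induced maps on $\Ext^\bullet_R(k,-)$ vanish), and then reduce the general case to this by passing to the $q$-th syzygy. Your treatment of the delicate points---showing $\depth N\ge 1$ in Step~1 when $\pi=1$, and in Step~2 ruling out $\depth K=0$ when $\depth T=1$ by forcing $p=q$ and exhibiting a socle element inside $T$---is clean and accurate. One minor stylistic remark: in Step~2 you could streamline the $\depth T=1$ case by noting directly that $\depth K\ge 1$ follows from $K\hookrightarrow N^{b_{q-1}}$ once you know $\depth N\ge 1$, and $\depth N\ge 1$ in turn follows from $\depth(L\otimes N)\ge 1$ (which you get from the first Depth Lemma inequality) together with Step~1; this avoids the detour through $p=q$, though your argument as written is perfectly valid.
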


\begin{lemma} \label{jdextdep} \cite[Lemma, p. 2763]{JD} Let $R$ be a local ring, and  let $M$  and $N$ be $R$-modules such that $N\ne 0$ and 
$$\Ext^i_R(M,N)=0,  \ {\rm for} \  i=1,\ldots, {\rm max}\{1, \depth_RN-2\}.$$  Then $\depth N\le \depth (\Hom_R(M,N))$.\end{lemma}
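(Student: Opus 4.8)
The plan is to deduce the lemma from the following general estimate: \emph{if $\Ext^i_R(M,N)=0$ for $1\le i\le r$, then $\depth_R \Hom_R(M,N)\ge\min\{\depth_R N,\,r+2\}$.} Granting this, I would apply it with $r:=\max\{1,\depth_R N-2\}$, which is exactly the range in which the hypothesis supplies vanishing; then $r+2=\max\{3,\depth_R N\}\ge\depth_R N$, so the estimate gives $\depth_R \Hom_R(M,N)\ge\min\{\depth_R N,\,r+2\}=\depth_R N$, which is the claim. (We may assume $\Hom_R(M,N)\ne 0$, since otherwise its depth is $\infty$ by convention and there is nothing to prove; in particular $M\ne 0$.)

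To prove the estimate I would fix a minimal free resolution $\cdots\to F_2\xrightarrow{d_2}F_1\xrightarrow{d_1}F_0\to M\to 0$ with $F_i\cong R^{b_i}$, apply $\Hom_R(-,N)$, and work with the cocycles $Z_i:=\ker\Hom_R(d_{i+1},N)$ and coboundaries $B_i:=\operatorname{im}\Hom_R(d_i,N)$ of the resulting cochain complex. Here $\Hom_R(F_i,N)\cong N^{b_i}$, $B_0=0$, $Z_0=\Hom_R(M,N)$, $\Ext^i_R(M,N)=Z_i/B_i$, and one has short exact sequences $0\to Z_i\to N^{b_i}\to B_{i+1}\to 0$ for $i\ge 0$ and $0\to B_i\to Z_i\to\Ext^i_R(M,N)\to 0$ for $i\ge 1$. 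Writing $t:=\depth_R N$ and using that $\Ext^i_R(M,N)=0$ for $1\le i\le r$, the second family gives $B_i=Z_i$ for $1\le i\le r$; feeding this into the Depth Lemma \cite[Lemma A.4]{LW} applied to the first family and telescoping over $i=0,1,\dots,r-1$ yields $\depth_R Z_0\ge\min\{t,\,\depth_R Z_r+r\}$.

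The remaining point — and the one that makes the estimate sharp enough — is that the terminal cocycle module $Z_r$ already has depth at least $\min\{t,2\}$: applying the Depth Lemma once more to $0\to Z_r\to N^{b_r}\to B_{r+1}\to 0$ gives $\depth_R Z_r\ge\min\{t,\,\depth_R B_{r+1}+1\}$, and since $B_{r+1}$ is a submodule of $N^{b_{r+1}}$, every non-zerodivisor on $N$ is a non-zerodivisor on $B_{r+1}$, so $\depth_R B_{r+1}\ge 1$ when $t\ge 1$ (with $B_{r+1}=0$, hence depth $\infty$, when $b_{r+1}=0$). Combining the two inequalities gives $\depth_R \Hom_R(M,N)=\depth_R Z_0\ge\min\{t,\,r+2\}$, proving the estimate.

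I expect the main obstacle to be precisely this final "$+2$": a crude telescoping of the Depth Lemma across $r$ short exact sequences only gives $\depth_R \Hom_R(M,N)\ge\min\{t,r\}$, which falls two short; the gain comes from the structural observation that the cocycle modules of $\Hom_R(-,N)$ embed in copies of $N$ and that their coboundary quotients embed in copies of $N$ as well. A few degeneracies should be checked but cause no trouble: the argument is uniform in $t$ (for $t\le 1$ it is immediate, since a non-zerodivisor on $N$ is automatically a non-zerodivisor on $\Hom_R(M,N)\subseteq N^{b_0}$), and if $\pd_R M<\infty$ then $b_i=0$ for large $i$, in which case the relevant $Z_i$ and $B_i$ vanish and all the estimates above only improve.
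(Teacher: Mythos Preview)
Your argument is correct. The depth-chasing via the cocycle/coboundary short exact sequences of $\Hom_R(F_\bullet,N)$ works exactly as you describe, and the key ``$+2$'' indeed comes from the two facts you isolate: one extra step in the Depth Lemma using $0\to Z_r\to N^{b_r}\to B_{r+1}\to 0$, and the observation that $B_{r+1}\subseteq N^{b_{r+1}}$ inherits any $N$-regular element, giving $\depth B_{r+1}\ge 1$ whenever $\depth N\ge 1$.

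As for comparison: the paper does not prove this lemma at all; it simply quotes it from Jothilingam--Duraivel \cite{JD}. So there is no in-paper argument to compare against. Your proof is the standard one and is essentially what appears in \cite{JD}; the general estimate you extract, $\depth\Hom_R(M,N)\ge\min\{\depth N,\,r+2\}$ whenever $\Ext^i_R(M,N)=0$ for $1\le i\le r$, is a clean way to package it.
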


Our next result, Theorem~\ref{thm:Ext0}, can be compared to the following result, due to Jothilingam and Duraivel:

\begin{theorem} \cite[Theorem 1]{JD}: Let  $M$   be a module over a regular local ring $R$. 
If
$$\Ext^i_R(M,N)=0,  \ {\rm for} \  i=1,\ldots, {\rm max}\{1, \depth_RN-2\},$$  for some nonzero $R$-module $N$, then $M^\ast$ is free.
\end{theorem}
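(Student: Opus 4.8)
The plan is to work entirely through the Auslander transpose $\D M$ and to finish with Auslander's depth formula. We may assume that $M$ is neither zero nor free (otherwise $M^*$ is free), and likewise that $M^*\neq 0$. Since $R$ is regular, every finitely generated module has finite projective dimension and, by the rigidity theorem for $\Tor$ over regular local rings, is $\Tor$-rigid in the sense of Discussion~\ref{AsTr}(ii); in particular $\pd_R\D M<\infty$. Splitting the defining exact sequence \eqref{eq:A-trans} of $\D M$ into two short exact sequences and using that $F_0^*$ and $F_1^*$ are free gives $\Tor_i^R(M^*,N)\cong\Tor_{i+2}^R(\D M,N)$ for all $i\geq 1$; this is the only way $M^*$ re-enters the argument.

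The first and main step is to show that $\Tor_i^R(\D M,N)=0$ for \emph{all} $i\geq 1$, using only the vanishing $\Ext^1_R(M,N)=0$ (which is part of the hypothesis, as $\max\{1,\depth_R N-2\}\geq 1$). Feeding $i=1$ into the Auslander sequence of Discussion~\ref{AsTr}(i) gives the exact sequence
$$\Tor_2^R(\D\Omega_R M,N)\to\Ext^1_R(M,R)\otimes_R N\to\Ext^1_R(M,N)\to\Tor_1^R(\D\Omega_R M,N)\to 0.$$
Its right-hand end yields $\Tor_1^R(\D\Omega_R M,N)=0$; $\Tor$-rigidity of $\D\Omega_R M$ then upgrades this to $\Tor_j^R(\D\Omega_R M,N)=0$ for all $j\geq 1$, so the leftmost term above vanishes, forcing $\Ext^1_R(M,R)\otimes_R N=0$ and hence $\Ext^1_R(M,R)=0$ by Nakayama. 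Pushing down the dualized minimal resolution of $M$ produces the exact sequence $0\to\Ext^1_R(M,R)\to\D M\to F_2^*\to\D\Omega_R M\to 0$ (here $\D_2M=\D\Omega_R M$, Definitions and Remarks~\ref{defn:A-trans}(iii)), which now collapses to $0\to\D M\to F_2^*\to\D\Omega_R M\to 0$; hence $\Tor_1^R(\D M,N)\cong\Tor_2^R(\D\Omega_R M,N)=0$, and a final application of $\Tor$-rigidity, this time to $\D M$, gives $\Tor_i^R(\D M,N)=0$ for all $i\geq 1$.

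It remains to run the depth computation. By the preceding step and the isomorphisms in the first paragraph, $\Tor_i^R(M^*,N)=0$ for all $i\geq 1$; moreover the Auslander sequence at level $0$ now degenerates, so that the natural map $M^*\otimes_R N\to\Hom_R(M,N)$ is an isomorphism. The full hypothesis $\Ext^i_R(M,N)=0$ for $1\leq i\leq\max\{1,\depth_R N-2\}$ is used exactly once, via Lemma~\ref{jdextdep}, to get $\depth_R N\leq\depth_R\Hom_R(M,N)=\depth_R(M^*\otimes_R N)$. Finally, apply Auslander's Theorem~\ref{austordep} to the module $M^*$ (which has finite projective dimension), the module $N$, and $q=0$ — legitimate because $M^*\otimes_R N\neq 0$ by Nakayama while $\Tor_i^R(M^*,N)=0$ for $i\geq 1$ — to obtain $\depth_R N=\depth_R(M^*\otimes_R N)+\pd_R M^*$. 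Combining this with the previous inequality forces $\pd_R M^*\leq 0$, that is, $M^*$ is free. The only delicate point is the first step: extracting the vanishing of every $\Tor_i^R(\D M,N)$ from the single equation $\Ext^1_R(M,N)=0$ genuinely uses $\Tor$-rigidity over the regular ring, and one has to carry the (harmless) free direct summands through the Auslander-transpose sequences; everything after that is a routine depth chase, and the cutoff $\max\{1,\depth_R N-2\}$ is precisely what Lemma~\ref{jdextdep} needs.
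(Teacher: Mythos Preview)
The paper does not supply its own proof of this statement; it is quoted from \cite{JD} as motivation for Theorem~\ref{thm:Ext0}. Your argument is correct and is, in effect, the paper's proof of Theorem~\ref{thm:Ext0} specialized to a regular local ring: the Auslander sequence at level $1$, rigidity to kill all $\Tor_j^R(\D\Omega_R M,N)$, Nakayama to obtain $\Ext^1_R(M,R)=0$, the short exact sequence $0\to\D M\to F_2^*\to\D\Omega_R M\to 0$, the isomorphism $M^*\otimes_R N\cong\Hom_R(M,N)$, and the depth comparison via Lemma~\ref{jdextdep} and Theorem~\ref{austordep} all match that proof line for line. The only cosmetic differences are that you invoke Tor-rigidity of $\D\Omega_R M$ and of $\D M$ (legitimate, since over a regular local ring every finitely generated module is Tor-rigid by Auslander's theorem), whereas in the quasi-fiber-product setting of Theorem~\ref{thm:Ext0} the paper must place rigidity on $N$; and where the paper needs Remark~\ref{rem:classNT}(1) to secure $\pd_RM^*<\infty$ (or $\pd_RN<\infty$), you get $\pd_RM^*<\infty$ for free from regularity.
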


\begin{thm}\label{thm:Ext0}
Let $R$ be a  quasi-fiber product ring. 
 Let $M$ and $N$ be nonzero $R$-modules. If $N$ is Tor-rigid and
$$\Ext^i_R(M,N)=0  \ {\rm for} \  i=1,\ldots, {\rm max}\{1, \depth_RN-2\},$$  then $M^\ast$ is free or $\pd_RN <\infty$.
\end{thm}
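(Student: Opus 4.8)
The plan is to play the Auslander sequence $({\ref{AsTr}.1})$ against the Tor-rigidity of $N$, convert the resulting vanishing of Tor into finite projective dimension by the Nasseh--Takahashi criterion, and then finish with a depth count using Theorem~\ref{austordep} and Lemma~\ref{jdextdep}. First I would reduce to the case $\pd_RN=\infty$, since otherwise there is nothing to prove. Taking $i=1$ in $({\ref{AsTr}.1})$, its tail $\Ext^1_R(M,N)\to\Tor_1^R(\D\Omega_RM,N)\to0$ together with $\Ext^1_R(M,N)=0$ gives $\Tor_1^R(\D\Omega_RM,N)=0$ (here $\D\Omega_RM=\D_2M$). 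Since $N$ is Tor-rigid, this propagates to $\Tor_j^R(\D\Omega_RM,N)=0$ for \emph{every} $j\ge1$. Applying Remarks~\ref{rem:classNT}(1) (that is, \cite[Corollary 6.5]{NT}) with any $t\ge\max\{5,n+1\}$, where $n$ is the length of the $R$-sequence defining the quasi-fiber product structure, yields $\pd_R\D\Omega_RM<\infty$ or $\pd_RN<\infty$; the case $\pd_RN=\infty$ leaves only $\pd_R\D\Omega_RM<\infty$.

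Next I would draw more out of $({\ref{AsTr}.1})$ at $i=1$: now that $\Tor_2^R(\D\Omega_RM,N)=0$ and $\Ext^1_R(M,N)=0$, the middle of the sequence forces $\Ext^1_R(M,R)\otimes_RN=0$, hence $\Ext^1_R(M,R)=0$ by Nakayama's lemma (recall $N\ne0$). Letting $F$ be a minimal free resolution of $M$, the vanishing $\Ext^1_R(M,R)=0$ says precisely that $\ker(F_1^*\to F_2^*)=\operatorname{im}(F_0^*\to F_1^*)$, which produces an exact sequence
\[
0\longrightarrow \D M\longrightarrow F_2^*\longrightarrow \D\Omega_RM\longrightarrow 0
\]
exhibiting $\D M$ as the (minimal) first syzygy of $\D\Omega_RM$. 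Hence $\pd_R\D M<\infty$, and $\Tor_j^R(\D M,N)=\Tor_{j+1}^R(\D\Omega_RM,N)=0$ for all $j\ge1$; in particular $\Tor_1^R(\D M,N)=\Tor_2^R(\D M,N)=0$. Feeding this into $({\ref{AsTr}.1})$ at $i=0$ collapses that sequence to an isomorphism $M^*\otimes_RN\cong\Hom_R(M,N)$.

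Finally, the defining sequence $0\to M^*\to F_0^*\to F_1^*\to\D M\to0$ shows that, up to a free summand, $M^*$ is a second syzygy of $\D M$; thus $\pd_RM^*<\infty$ and $\Tor_i^R(M^*,N)=\Tor_{i+2}^R(\D M,N)=0$ for all $i\ge1$. If $M^*=0$ we are done. Otherwise $M^*\otimes_RN\ne0$, so Theorem~\ref{austordep} applied with $q=0$ gives $\depth N=\depth(M^*\otimes_RN)+\pd_RM^*$. On the other hand, the hypothesis $\Ext^i_R(M,N)=0$ for $1\le i\le\max\{1,\depth_RN-2\}$ is exactly the hypothesis of Lemma~\ref{jdextdep}, so $\depth N\le\depth\Hom_R(M,N)=\depth(M^*\otimes_RN)$. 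Comparing the two relations forces $\pd_RM^*\le0$, i.e.\ $M^*$ is free, completing the proof.

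The step I expect to be the main obstacle is the second one: squeezing $\Ext^1_R(M,R)=0$ out of the Tor-rigidity of $N$ and then converting it into the syzygy identity $\D M\cong\Omega_R(\D\Omega_RM)$, since it is precisely this identity that produces $\Tor_1^R(\D M,N)=0$ and hence the isomorphism $M^*\otimes_RN\cong\Hom_R(M,N)$ driving the depth count. One must also keep careful track of minimal versus non-minimal syzygies along $M^*$, $\D M$, $\D\Omega_RM$, and handle the degenerate cases where $\D\Omega_RM$ or $\D M$ vanishes, in which $M^*$ is free at once. Note that the quasi-fiber product hypothesis is used only once, through Remarks~\ref{rem:classNT}(1).
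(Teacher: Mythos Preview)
Your proof is correct and follows essentially the same route as the paper's. The only cosmetic difference is that you invoke the Nasseh--Takahashi criterion (Remarks~\ref{rem:classNT}(1)) on $\D\Omega_RM$ and then propagate finite projective dimension down to $\D M$ and $M^*$ via the syzygy relations, whereas the paper first establishes $\Tor_j^R(M^*,N)=0$ for all $j\ge1$ and applies the criterion directly to the pair $(M^*,N)$; both orderings lead to $\pd_RM^*<\infty$ and the same depth endgame via Theorem~\ref{austordep} and Lemma~\ref{jdextdep}.
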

\begin{proof}
Since $\Ext^1_R(M,N)=0,$ the Auslander sequence \eqref{key} shows that 
$$
\Tor_1^R(\D\Omega_R^1M,N)=0\,.
$$
 The Tor-rigidity of  $N$ implies that
\begin{equation}\label{1}\Tor_i^R(\D\Omega_R^1M,N)=0  \ \ {\rm for} \ \ {\rm all} \ \  i\geq 1.\tag{\ref{thm:Ext0}.1}
\end{equation}

The Auslander sequence \eqref{key}  implies that $\Ext^1_R(M,R)\otimes_R N=0.$ Since $N$ is nonzero, Nakayama's lemma implies 
that $\Ext^1_R(M,R)=0$.
 
 Using the minimal free resolution
$$
F_2\to F_1 \to F_0 \to M \to 0
$$ 
to compute $\Ext_R(M,R)$, we have
$$
0 = \Ext^1_R(M,R) = \frac{\ker(F_1^\ast \to F_2^\ast)}{\text{im}(F_0^\ast \to F_1^\ast)}\,.
$$
Therefore 
$$
\D M = \coker(F_0^\ast \to F_1^\ast) = \frac{F_1^\ast}{\text{im}(F_0^\ast \to F_1^\ast)} =
\frac{F_1^\ast}{\ker(F_1^\ast \to F_2^\ast)}\,,
$$
and hence the map $F_1^*\to F_2^*$ factors as $F_1^*\twoheadrightarrow \D M \hookrightarrow F_2^*$\,.
This shows that $\D M \cong \text{im}(F_1^*\to F_2^*)$.  Since $\D\Omega^1_RM = \coker(F_1^*\to F_2^*)$,
we get a short exact sequence
\begin{equation}\label{gleep}
0\to \D M \to F_2^\ast \to \D\Omega_R^1M\to 0\,.\tag{\ref{thm:Ext0}.2}
\end{equation}
Together \eqref{1} and \eqref{gleep} imply that
\begin{equation}\label{glarb}
\Tor_j^R(DM,N)=0 \text{ for all } j\ge 1\,.\tag{\ref{thm:Ext0}.3}
\end{equation}
Using \eqref{glarb} and \eqref{eq:A-trans}, we see that
\begin{equation}\label{barf}
\Tor_j^R(M^*,N) = 0 \text{ for all } j\ge 1\,.\tag{\ref{thm:Ext0}.4}
\end{equation}
 By \eqref{barf} and Remark~\ref{rem:classNT}(1), 
\begin{equation*}%
\pd_R M^\ast <\infty \text{ or } \pd_RN<\infty\,.
\end{equation*}
 If $\pd_RN<\infty\,$, we are done.
 
To complete the proof of Theorem~\ref{thm:Ext0}, assume   $\pd_RM^\ast<\infty$.
We show  that $M^\ast$ is free. 
By \eqref{glarb}  and the Auslander sequence \eqref{key} in the case $i=0$, we have the isomorphism
\begin{equation}\label{eq:gorpse}
M^\ast\otimes_RN =\Hom_R(M,R)\otimes_RN\cong \Hom_R(M,N). \tag{\ref{thm:Ext0}.5}
\end{equation}
Since $\pd_RM^\ast<\infty$, 
Theorem~\ref{austordep} with $q=0$ implies  
\begin{equation}\label{eq:gorpse2} \depth_R N=\depth_R(M^\ast\otimes_RN)
+\pd_R M^*.  \tag{\ref{thm:Ext0}.6}
\end{equation}
By  \eqref{eq:gorpse} and  Lemma~\ref{jdextdep},  
\begin{equation}\label{eq:gorpse1} \depth_R(M^\ast\otimes_RN)= \depth_R (\Hom_R(M,N))
\ge \depth_RN.
 \tag{\ref{thm:Ext0}.7}
 \end{equation}
  Putting together   \eqref{eq:gorpse1}  and  \eqref{eq:gorpse2} yields that $\pd_R M^*=0$; that is, $M^\ast$ is free, as desired.
\
\end{proof}

\begin{cor}\label{coro:Ext}
Let $R$ be a quasi-fiber product ring with  $\dim R\leq 3$ .
Let $M$ and $N$ be nonzero $R$-modules such that $N$ is Tor-rigid,  $\Ext^1_R(M,N)=0$, and $\pd_RN = \infty$. Then $M^\ast$ is free.
\end{cor}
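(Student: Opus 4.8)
The plan is to deduce this directly from Theorem~\ref{thm:Ext0}; the only point requiring a word of justification is that the single vanishing hypothesis $\Ext^1_R(M,N)=0$ already matches the block of vanishing demanded there. First I would recall the standard inequalities $\depth_R N \le \dim_R N \le \dim R \le 3$. Consequently $\depth_R N - 2 \le 1$, so that
\[
\max\{1,\ \depth_R N - 2\} = 1.
\]
Hence the hypothesis ``$\Ext^i_R(M,N)=0$ for $i=1,\dots,\max\{1,\depth_R N-2\}$'' appearing in Theorem~\ref{thm:Ext0} reduces, in the present situation, to the single condition $\Ext^1_R(M,N)=0$, which we are assuming.

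Next I would simply invoke Theorem~\ref{thm:Ext0}: since $R$ is a quasi-fiber product ring, $N$ is Tor-rigid and nonzero, and $\Ext^1_R(M,N)=0$, we conclude that $M^\ast$ is free or $\pd_R N < \infty$. By hypothesis $\pd_R N = \infty$, so the first alternative must hold and $M^\ast$ is free. There is no real obstacle here; the content of the corollary is entirely in recognizing that the arithmetic bound $\dim R \le 3$ forces $\max\{1,\depth_R N - 2\}=1$, so that Theorem~\ref{thm:Ext0} applies under the weakest possible Ext-vanishing hypothesis. One might add a remark that the bound $\dim R\le 3$ is exactly what is needed for this reduction, and that for larger dimension one would have to assume more vanishing of $\Ext^i_R(M,N)$.
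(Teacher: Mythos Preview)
Your proof is correct and follows essentially the same approach as the paper: reduce to Theorem~\ref{thm:Ext0} by checking that $\max\{1,\depth_RN-2\}=1$, then use the hypothesis $\pd_RN=\infty$ to rule out the second alternative. The paper's one-line justification reads ``Since $\depth_R N\le \depth R\le 3$, Theorem~\ref{thm:Ext0} applies''; your chain $\depth_R N \le \dim_R N \le \dim R \le 3$ is in fact the more careful way to bound $\depth_R N$, since $\depth_R N\le \depth R$ need not hold for an arbitrary module.
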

\begin{proof} Since $\depth_R N\le \depth R\le 3$, Theorem~\ref{thm:Ext0} applies.
\end{proof}

The next result provides an answer for Question \ref{question1}.

\begin{cor}\label{GAR}
Let $R$ be a  quasi-fiber product ring.
Let $M$ and $N$ be nonzero $R$-modules,
with $\pd_RN = \infty$.  Suppose that
\begin{itemize}
    \item[(i)] $N$ is Tor-rigid.
    \item[(ii)] $\Ext^i_R(M,N)=0$,  for  all $i$ with $1\le i\le\ldots, \le {\rm max}\{1, \depth_RN-2\}$.
    \item[(iii)] $\Ext^j_R(M,R)=0$, for   every $j$ with  $1\le\ldots, \le\depth R.$
\end{itemize}
Then $M$ is free.
\end{cor}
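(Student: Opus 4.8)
The plan is to combine Theorem~\ref{thm:Ext0} with the vanishing hypothesis on $\Ext^j_R(M,R)$ to bootstrap from ``$M^\ast$ is free'' to ``$M$ is free.'' First I would apply Theorem~\ref{thm:Ext0}: since $N$ is Tor-rigid and $\Ext^i_R(M,N)=0$ for $1\le i\le\max\{1,\depth_RN-2\}$, we conclude that either $\pd_RN<\infty$ or $M^\ast$ is free. The hypothesis $\pd_RN=\infty$ rules out the first alternative, so $M^\ast$ is free. Moreover, reexamining the proof of Theorem~\ref{thm:Ext0}, the vanishing of $\Ext^1_R(M,N)$ already forced $\Ext^1_R(M,R)=0$, which is consistent with hypothesis (iii) and gives the short exact sequence $0\to M^\ast\to F_0^\ast\to F_1^\ast\to \D M\to 0$ together with $0\to \D M\to F_2^\ast\to \D\Omega_R^1M\to 0$; in particular $\D M$ embeds in a free module, so $\D M$ is torsionless.

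Next I would use hypothesis (iii), namely $\Ext^j_R(M,R)=0$ for all $j$ with $1\le j\le \depth R$, to show that $M$ is reflexive and, in fact, a high syzygy. The point is that when $\Ext^j_R(M,R)=0$ for $j=1,\dots,t$, the module $M$ is a $t$-th syzygy (this is the standard characterization via the dual of a free resolution: dualizing $F_\bullet\to M$ and splicing gives that $\D M$ has a free resolution whose first $t$ cokernels compute $\Ext^j_R(M,R)$, so $M=(\D M)^\ast$ is a $(t+1)$-st syzygy up to free summands, or more directly one uses the Auslander sequence~(\ref{AsTr}.1) with $N=R$). With $t=\depth R$, Facts~\ref{fact:torsion}(iii) and~\ref{fact:torsion}(iv)-type reasoning over the Cohen-Macaulay locus — or rather a direct depth count — shows $M$ is reflexive; more importantly, combining with the fact that $M^\ast$ is free, reflexivity gives $M\cong M^{\ast\ast}=(\text{free})^\ast=$ free.

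Here is the cleaner route I would actually carry out: once $M^\ast$ is free, say $M^\ast\cong R^r$, we automatically get $M^{\ast\ast}\cong R^r$ free. The canonical map $h\colon M\to M^{\ast\ast}$ fits (Discussion~\ref{AsTr}(iii)) into $0\to \Ext^1_R(\D M,R)\to M\xrightarrow{h} M^{\ast\ast}\to \Ext^2_R(\D M,R)\to 0$, so it suffices to show $\Ext^1_R(\D M,R)=\Ext^2_R(\D M,R)=0$; that is, $M$ is reflexive. Since $M^\ast$ is free, the exact sequence $0\to M^\ast\to F_0^\ast\to F_1^\ast\to \D M\to 0$ shows $\D M$ has projective dimension at most $2$, and then $\Ext^i_R(\D M,R)$ for $i=1,2$ can be controlled by hypothesis (iii): indeed these $\Ext$ modules agree with $\Ext^{i}_R(M,R)$ shifted appropriately, hence vanish because $1\le i\le\depth R$ (using $\depth R\ge ?$ — if $\depth R\le 1$ one handles the small cases by hand, noting $\D M$ having finite projective dimension at most $2$ combined with $\Ext^1_R(\D M,R)=0$ already gives reflexivity in low depth). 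Therefore $h$ is an isomorphism and $M\cong M^{\ast\ast}$ is free.

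The main obstacle is the low-depth bookkeeping: the quantity $\max\{1,\depth_RN-2\}$ and the range $1\le j\le\depth R$ can both collapse when $\depth R$ is $0$ or $1$, so I would need to verify that the argument — particularly the step deducing reflexivity of $M$ from finite projective dimension of $\D M$ plus the vanishing of $\Ext^{1,2}_R(\D M,R)$ — still goes through, possibly by treating $\depth R\le 1$ separately using Fact~\ref{pdfin1} and the Auslander-Buchsbaum formula (Remark~\ref{rem:ABF}), where $\pd_RM^\ast<\infty$ forces $\pd_RM^\ast\le 1$ and one can conclude directly. The other point requiring care is matching the indexing between $\Ext^i_R(\D M,R)$ and $\Ext^j_R(M,R)$ coming from the transpose sequence, which is routine but must be done correctly.
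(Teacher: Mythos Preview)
Your first step is exactly right and matches the paper: Theorem~\ref{thm:Ext0} together with $\pd_RN=\infty$ forces $M^\ast$ to be free.  The paper then finishes in one line by citing \cite[Lemma~3.3]{DEL}, which says precisely that $M^\ast$ free and $\Ext^j_R(M,R)=0$ for $1\le j\le\depth R$ imply $M$ is free.

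Your attempt to replace that citation by a direct argument has a genuine gap.  The claim that ``$\Ext^i_R(\D M,R)$ for $i=1,2$ agree with $\Ext^i_R(M,R)$ shifted appropriately'' is simply false.  If you compute $\Ext^i_R(\D M,R)$ from the free resolution $0\to M^\ast\to F_0^\ast\to F_1^\ast\to \D M\to 0$ (valid once $M^\ast$ is free), dualizing gives the complex $F_1\to F_0\to M^{\ast\ast}$, and the map $F_0\to M^{\ast\ast}$ is the composite $F_0\twoheadrightarrow M\xrightarrow{h}M^{\ast\ast}$.  A direct check shows $\Ext^1_R(\D M,R)\cong\ker h$ and $\Ext^2_R(\D M,R)\cong\coker h$ --- this is exactly the exact sequence in Discussion~\ref{AsTr}(iii).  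So asserting that these Ext groups vanish \emph{is} asserting that $M$ is reflexive; your argument is circular at this point.

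A correct direct proof (essentially what \cite{DEL} does) requires more: from $\Ext^j_R(M,R)=0$ for $1\le j\le d:=\depth R$ one gets an exact sequence $0\to M^\ast\to F_0^\ast\to\cdots\to F_d^\ast\to C\to 0$ with $C\hookrightarrow F_{d+1}^\ast$.  Auslander--Buchsbaum gives $\pd_RC\le d$, and minimality of the maps $F_i^\ast\to F_{i+1}^\ast$ then forces (via $\Tor_{d+1}^R(C,k)=0$) the inclusion $M^\ast\hookrightarrow F_0^\ast$ to split.  Dualizing, $h\colon M\to M^{\ast\ast}$ is surjective, so $M\cong M^{\ast\ast}\oplus K$ with $K=\ker h$.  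Now $K^\ast=0$ and $\Ext^i_R(K,R)=0$ for $1\le i\le d$, whence $\operatorname{grade} K>d=\depth R$, forcing $K=0$.  None of this appears in your proposal; the ``shift'' you invoke does not exist, and the low-depth bookkeeping you flag is not the real obstacle.
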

\begin{proof}
By Theorem \ref{thm:Ext0}, $M^*$ is free.  By  \cite[Lemma 3.3]{DEL}, $M^*$ free and $\Ext^j_R(M,R)=0$, for all $j=1,\ldots, \depth R$,
together imply that
$M$ is free.
\end{proof}

\begin{cor}\label{CMGAR}
Let $R$ be a  Cohen-Macaulay fiber product ring.
 Let $M$ and $N$ be nonzero $R$-modules. If $N$ is Tor-rigid and $\Ext^1_R(M,N)=0,$
then  $M$ is free or $\pd_RN \leq 1$.
\end{cor}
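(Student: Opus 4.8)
The plan is to deduce Corollary~\ref{CMGAR} directly from Theorem~\ref{thm:Ext0}, using the special structure of a Cohen-Macaulay fiber product ring. First I would observe that a fiber product ring is the case $n=0$ of a quasi-fiber product ring, so Theorem~\ref{thm:Ext0} applies to $R$. By Fact~\ref{pdfin1}(1), $\depth R \le 1$, and since $R$ is Cohen-Macaulay this forces $\dim R = \depth R \le 1$. Consequently $\depth_R N \le \depth R \le 1$, so $\max\{1,\depth_R N - 2\} = 1$. Thus the hypothesis $\Ext^1_R(M,N)=0$ is exactly the vanishing hypothesis required by Theorem~\ref{thm:Ext0}, and we conclude that $M^\ast$ is free or $\pd_R N < \infty$.

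Next I would dispose of the two cases. If $\pd_R N < \infty$, then by Fact~\ref{pdfin1}(2) we get $\pd_R N \le 1$, which is one of the two desired conclusions. If instead $M^\ast$ is free, I need to upgrade ``$M^\ast$ free'' to ``$M$ free.'' For this I would look back into the proof of Theorem~\ref{thm:Ext0}: there it was shown, using the Auslander sequence in the case $i=1$ together with Tor-rigidity and Nakayama, that $\Ext^1_R(M,R)=0$, i.e. $M$ is torsionless. Over a $1$-dimensional Cohen-Macaulay ring (or a $0$-dimensional one), one can then argue that a torsionless module with free dual is free: indeed $M$ embeds in $M^{\ast\ast}$, which is free since $M^\ast$ is free, and one analyzes the cokernel. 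Alternatively, and more cleanly, I would invoke the machinery already used in Corollary~\ref{GAR}: if one can also see that $\Ext^j_R(M,R)=0$ for $1 \le j \le \depth R$, then \cite[Lemma 3.3]{DEL} gives that $M$ is free. Since $\depth R \le 1$, the only condition needed is $\Ext^1_R(M,R)=0$, which we already have.

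So the cleanest route is: apply Theorem~\ref{thm:Ext0} to get $M^\ast$ free or $\pd_R N<\infty$; in the latter case use Fact~\ref{pdfin1}(2) to conclude $\pd_R N\le 1$; in the former case, extract from the proof of Theorem~\ref{thm:Ext0} the vanishing $\Ext^1_R(M,R)=0$ (valid because $N\ne 0$ is Tor-rigid and $\Ext^1_R(M,N)=0$), and then apply \cite[Lemma 3.3]{DEL} with $\depth R\le 1$ to conclude $M$ is free.

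The main obstacle I anticipate is the passage from ``$M^\ast$ free'' to ``$M$ free.'' Theorem~\ref{thm:Ext0} as stated only yields freeness of $M^\ast$, so one must carefully re-use the intermediate fact $\Ext^1_R(M,R)=0$ (which is genuinely available from that proof) and combine it with the low-depth hypothesis; the point is that in the fiber-product setting $\depth R\le 1$ collapses the list of required $\Ext$-vanishings down to the single one we are handed, so no extra hypotheses beyond those in the statement are needed. One should double-check the degenerate case $\depth R = 0$ (where $R$ is Artinian): there the argument still goes through since $\max\{1,\depth_R N-2\}=1$ and \cite[Lemma 3.3]{DEL} is vacuous or trivial, but it deserves an explicit word.
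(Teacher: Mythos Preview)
Your proposal is correct and follows essentially the same route as the paper. The paper first disposes of the case $\pd_RN<\infty$ via Fact~\ref{pdfin1}(2), then (assuming $\pd_RN=\infty$) re-derives $\Ext^1_R(M,R)=0$ directly from the Auslander sequence and Nakayama (exactly the step you extract from the proof of Theorem~\ref{thm:Ext0}), and finally invokes Corollary~\ref{GAR}, whose proof is precisely ``Theorem~\ref{thm:Ext0} $+$ \cite[Lemma~3.3]{DEL}''; so the only difference is packaging---you call Theorem~\ref{thm:Ext0} and \cite[Lemma~3.3]{DEL} separately, while the paper bundles them as Corollary~\ref{GAR}.
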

\begin{proof}  If $\pd_RN < \infty$, then $\pd_RN \leq 1$, by Fact~\ref{pdfin1}(2). Thus we assume $\pd_RN=\infty$. By Fact~\ref{pdfin1}(1), $\depth R\leq 1$, and so $\dim R\le 1$.
 Since $\Ext^1_R(M,N)=0,$ and $N$ is Tor-rigid, the Auslander sequence (\ref{key}) implies
 $$\Tor_i^R(D\Omega_R^1M,N)=0,\text{ for all }i\geq 1.$$
  Again from the Auslander sequence we deduce $\Ext^1_R(M,R)\otimes_RN=0$. Since $N$ is nonzero,  
  Nakayama's Lemma implies $\Ext^1_R(M,R)=0$.  Now apply Corollary \ref{GAR}.
\end{proof}

\begin{cor}\label{cor:CMEXT}
Let $R$ be a  Cohen-Macaulay
fiber product ring.
Let $M$ and $N$ be nonzero $R$-modules. If $N$ is Tor-rigid and
$\Ext^i_R(M,N)=0$ for some $i>1$,
then $\pd_RM\leq 1$  or $\pd_RN \leq 1$.
\end{cor}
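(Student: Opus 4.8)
The plan is to bootstrap the $i=1$ case, Corollary~\ref{CMGAR}, up to an arbitrary $i>1$ by replacing $M$ with a syzygy. First I would dispense with the case $\pd_RN<\infty$: since $R$ is a fiber product ring, Fact~\ref{pdfin1}(2) then gives $\pd_RN\le 1$ and there is nothing more to prove. So I may assume $\pd_RN=\infty$, and the task becomes to force $\pd_RM\le 1$.

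Next I would set $M':=\Omega_R^{i-1}M$; this makes sense since $i>1$ means $i-1\ge 1$. If $M'=0$, then $\pd_RM\le i-2<\infty$, so $\pd_RM\le 1$ by Fact~\ref{pdfin1}(2) and again we are done; hence I may assume $M'\ne 0$. Standard dimension shifting along the short exact sequences $0\to\Omega_R^{j+1}M\to F_j\to\Omega_R^jM\to 0$ with $F_j$ free (so that $\Ext_R^k(F_j,N)=0$ for all $k\ge 1$) yields
\[
\Ext_R^1(M',N)\;\cong\;\Ext_R^i(M,N)\;=\;0 .
\]

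Now I would invoke Corollary~\ref{CMGAR} for the pair $(M',N)$: here $R$ is a Cohen-Macaulay fiber product ring, $M'$ and $N$ are nonzero, $N$ is Tor-rigid, and $\Ext_R^1(M',N)=0$, so the conclusion is that $M'$ is free or $\pd_RN\le 1$. The second alternative is ruled out by our standing assumption $\pd_RN=\infty$, so $M'=\Omega_R^{i-1}M$ is free; therefore $\pd_RM\le i-1<\infty$, and one last application of Fact~\ref{pdfin1}(2) improves this to $\pd_RM\le 1$. I do not expect any genuine obstacle: the argument is a routine dimension shift followed by a citation of Corollary~\ref{CMGAR}, and the only thing to keep an eye on is the collection of degenerate sub-cases ($\pd_RN$ finite, or the syzygy $\Omega_R^{i-1}M$ vanishing), each of which is immediately absorbed by the fact that finite projective dimension over a fiber product ring is automatically at most one.
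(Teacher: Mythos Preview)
Your argument is correct and follows essentially the same route as the paper: shift by $i-1$ syzygies to reduce to the $i=1$ case, apply Corollary~\ref{CMGAR}, and finish with Fact~\ref{pdfin1}(2). You are in fact slightly more careful than the paper in explicitly handling the degenerate sub-cases $\pd_RN<\infty$ and $\Omega_R^{i-1}M=0$.
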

\begin{proof}  Assume that $\pd_RN >1$.
Since   $\Ext_R^1(\Omega_R^{i-1}M,N) \cong \Ext_R^i(M,N) = 0$, Corollary \ref{CMGAR} implies that
$\Omega_R^{i-1}M$ is free.  Therefore $\pd_RM <\infty$, and now Fact \ref{pdfin1} implies that $\pd_RM\le 1$.
\end{proof}

Corollary~\ref{cor:CMEXT} is somewhat relevant to a question raised by Jorgensen  \cite[Question 2.7]{Jor}: 
If $\Ext^n_R(M,M) = 0$, over a complete intersection $R$, must $M$ have projective dimension at most $n-1$?


\begin{ex} \label{example1} \cite[Remark 2.6]{Jor}, \cite[Example 4.3]{AvBu} Let $k$ be a field, and 
let $R = k[[x,y]]/(xy)$.  Let 
$M=R/(y) \cong Rx$.  It is easy to compute $\Ext_R(M,M)$
and $\Tor^R(M,M)$ using the periodic free resolution
$$
\dots \overset{x}{\to} R \overset{y}{\to} R \overset{x}{\to} R \overset{y}{\to} R \to 0
$$
of $M$.  Then $\Ext_R^i(M,M)=0$, for every odd $i>0$, and $\Ext_R^i(M,M) \cong k$, for every even $i>0$.
Therefore $\pd_RM=\infty$.  Also, $\Tor^R_i(M,M)$ alternates between $0$ and $k$, but with $k$ at the {\em odd} indices.  Thus                                                                                                                                                                                                                                                                                                                                                                                                                                                                                                                                                                                                                                                                                                                              $M$ is not Tor-rigid. This example justifies the rigidity hypothesis in Corollary \ref{cor:CMEXT}. 

In addition, this example shows 
why one should insist that $R$ be a domain (or at least that $M$ have rank) in (HWC$_d$).  One checks that $M^* \cong M$, and
 hence $M\otimes_RM^* \cong (R/(y))\otimes_R(R/(y)) \cong R/(y) \cong M$, which is torsion-free, hence reflexive (as $R$ is one-dimensional and Gorenstein --- see Facts~\ref{fact:torsion}).  But of course $M$ is not free.
\end{ex}

In the proofs of the next theorems we use the change-of-rings isomorphisms (i) and (iii) of Lemma 2 in Chapter 18 of Matsumura's book \cite{Mat}.  We record those formulas here, adjusting the notation to suit our
situation:

\begin{rem}\label{Mats-Formulas}  Let $R$ be a local ring, and let $N$ and $Z$ be $R$-modules.  
Let $\underline x = x_1,\dots, x_n$
be an $R$-sequence that is also an $N$-sequence, and assume that $({\underline x})Z=0$.  
Put $\overline R = R/(\underline x)$
and $\overline N = N/({\underline x})N$.   The following formulas hold for every non-negative integer $i$:
\begin{enumerate}[(a)]
\item $\Ext_R^{i+n}(Z,N) \cong \Ext_{\overline R}^i(Z,{\overline N})$\,.
\item $\Tor^R_i(Z,N) \cong \Tor^{\overline R}_i(Z,{\overline N})$\,.
\end{enumerate}
\end{rem}

\begin{thm} \label{evodthm00}
 Let $R$ be a quasi-fiber product ring
  with respect to the regular sequence $\underline{x}=x_1,\ldots,x_n$. Let $M$ and $N$ be nonzero $R$-modules. If $N$ is  a Tor-rigid maximal
 Cohen-Macaulay  module and, for some $t>n$,
 $$
 \Ext^i_R(M,N)=0 \ \ {\rm whenever}   \ \ t\leq i \leq t+n\,,
 $$ 
 then $\pd_R M <\infty$ or $N$ is free.
\end{thm}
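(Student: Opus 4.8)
The plan is to descend modulo the regular sequence $\underline{x}$ to the honest fiber product ring $\overline{R}:=R/(\underline{x})$, apply the results already established for fiber product rings, and lift the conclusion back to $R$. Set $X:=\Omega^n_RM$; by Lemma~\ref{lem:syz} the sequence $\underline{x}$ is $X$-regular. Since $N$ is maximal Cohen--Macaulay, $\depth_RN=\dim R$, so every $\mathfrak{p}\in\Ass N$ satisfies $\dim R/\mathfrak{p}\ge\depth_RN=\dim R$ and is a minimal prime of $R$; as $\underline{x}$ is $R$-regular it avoids these primes, so $\underline{x}$ is $N$-regular as well. Write $\overline{N}:=N/(\underline{x})N\ne0$ and $\overline{X}:=X/(\underline{x})X$. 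Then $\overline{N}$ is maximal Cohen--Macaulay over $\overline{R}$ of depth $\dim R-n=\dim\overline{R}$, and $\overline{N}$ is Tor-rigid over $\overline{R}$: for an $\overline{R}$-module $Z$ the isomorphisms $\Tor^{\overline{R}}_i(Z,\overline{N})\cong\Tor^R_i(Z,N)$ of Remark~\ref{Mats-Formulas}(b) turn the rigidity of $N$ into that of $\overline{N}$. To transport the vanishing, apply Remark~\ref{Mats-Formulas}(a) to the $\overline{R}$-module $\overline{X}$ to get $\Ext^{p+n}_R(\overline{X},N)\cong\Ext^{p}_{\overline{R}}(\overline{X},\overline{N})$, note $\Ext^p_R(X,N)\cong\Ext^{p+n}_R(M,N)=0$ for $t-n\le p\le t$, and feed these $n+1$ vanishings through the long exact $\Ext$-sequences of $0\to X_{j-1}\xrightarrow{x_j}X_{j-1}\to X_j\to0$ (with $X_0=X$, $X_j=X/(x_1,\dots,x_j)X$, each step shrinking the interval of vanishing by one); this leaves $\Ext^t_R(\overline{X},N)=0$, hence $\Ext^{\,t-n}_{\overline{R}}(\overline{X},\overline{N})=0$ with $t-n\ge1$ since $t>n$. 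Passing to $Y:=\Omega^{\,t-n-1}_{\overline{R}}\overline{X}$ shifts this to $\Ext^1_{\overline{R}}(Y,\overline{N})=0$, and $\pd_{\overline{R}}Y<\infty\iff\pd_{\overline{R}}\overline{X}<\infty$.

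Now work over the fiber product ring $\overline{R}$, with $\overline{N}$ maximal Cohen--Macaulay, Tor-rigid, and $\Ext^1_{\overline{R}}(Y,\overline{N})=0$. If $\pd_{\overline{R}}\overline{N}<\infty$, the Auslander--Buchsbaum formula (Remark~\ref{rem:ABF}) gives $\pd_{\overline{R}}\overline{N}=\depth\overline{R}-\dim\overline{R}\le0$, so $\overline{N}$ is $\overline{R}$-free; since $\underline{x}$ is $R$- and $N$-regular, this forces $N$ to be $R$-free (lift a basis of $\overline{N}$; the kernel of $R^{\mu(N)}\twoheadrightarrow N$ becomes zero modulo $\underline{x}$, hence vanishes by Nakayama), and we are done. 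Otherwise $\pd_{\overline{R}}\overline{N}=\infty$, and we run the proof of Corollary~\ref{CMGAR}: $\Ext^1_{\overline{R}}(Y,\overline{N})=0$ together with rigidity of $\overline{N}$ forces, via the Auslander sequence~\eqref{key}, first $\Tor^{\overline{R}}_i(\D\Omega^1_{\overline{R}}Y,\overline{N})=0$ for all $i\ge1$ and then $\Ext^1_{\overline{R}}(Y,\overline{R})\otimes_{\overline{R}}\overline{N}\cong\Ext^1_{\overline{R}}(Y,\overline{N})=0$, so $\Ext^1_{\overline{R}}(Y,\overline{R})=0$ by Nakayama. Since $\depth\overline{R}\le1$ (Fact~\ref{pdfin1}) and $\overline{N}$ is maximal Cohen--Macaulay, $\max\{1,\depth_{\overline{R}}\overline{N}-2\}=\max\{1,\dim\overline{R}-2\}$, so Corollary~\ref{GAR}---whose two $\Ext$-hypotheses are precisely the single vanishings just produced---applies and yields $Y$ free, hence $\pd_{\overline{R}}\overline{X}<\infty$. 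Finally, since $\pd_{\overline{R}}(X/(\underline{x})X)<\infty$ and $\underline{x}$ is $X$-regular, $\pd_RX<\infty$ by \cite[Lemma~1.3.5]{BH}; as $X=\Omega^n_RM$, this gives $\pd_RM<\infty$.

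The delicate point---the main obstacle---is the final input to the fiber-product step: reduction modulo $\underline{x}$ preserves only a \emph{single} vanishing $\Ext$-group over $\overline{R}$ (we begin with $n+1$ consecutive vanishings over $R$ and lose $n$ of them), so feeding Corollary~\ref{GAR} requires $\max\{1,\dim\overline{R}-2\}=1$, i.e.\ $\dim\overline{R}\le3$. When $R$ is Cohen--Macaulay this is automatic, since $n\in\{\dim R-1,\dim R\}$ then forces $\dim\overline{R}=\dim R-n\le1$ and $\overline{R}$ is itself Cohen--Macaulay, so Corollaries~\ref{CMGAR} and~\ref{cor:CMEXT} finish it at once. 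The genuinely non-Cohen--Macaulay case is where the work lies: there one must supply an extra structural fact---for example that a fiber product ring of dimension $\ge4$ carries no non-free Tor-rigid maximal Cohen--Macaulay module of infinite projective dimension, a statement made plausible by the periodic-resolution computation for $k[[x,y]]/(xy)$ in Example~\ref{example1}---so as to reduce the general case to the Cohen--Macaulay one. Establishing that reduction is the step I expect to require the most care.
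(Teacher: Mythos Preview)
Your overall strategy---replace $M$ by $X=\Omega_R^nM$, push the $n+1$ consecutive vanishings through the short exact sequences $0\to X_{j-1}\xrightarrow{x_j}X_{j-1}\to X_j\to0$ to obtain $\Ext^t_R(X/\underline{x}X,N)=0$, apply Remark~\ref{Mats-Formulas}(a) to land in $\Ext_{\overline R}(\overline X,\overline N)$, and then invoke the fiber-product results---is exactly the paper's argument. The step you flag as ``delicate'' is real, and you have located it precisely: after descent you retain only a single vanishing over $\overline R$, so you need $\dim\overline R$ small to apply Corollary~\ref{GAR} (equivalently, you need $\overline R$ Cohen--Macaulay to apply Corollaries~\ref{CMGAR}/\ref{cor:CMEXT}).

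The gap is that you are treating the non--Cohen--Macaulay case as genuine, when in fact it cannot occur. The very first line of the paper's proof dispatches it: a local ring that admits a nonzero Tor-rigid maximal Cohen--Macaulay module is automatically Cohen--Macaulay (\cite[Theorem~4.3]{aus} or \cite[Corollary~4.7]{CZGS}). Since $N$ is assumed Tor-rigid and MCM, $R$ is CM; hence $\overline R=R/(\underline x)$ is a Cohen--Macaulay fiber product ring with $\dim\overline R=\depth\overline R\le 1$, and your worry about $\dim\overline R\le 3$ evaporates. With this one sentence added at the outset, your argument is complete and coincides with the paper's; the speculative ``extra structural fact'' you propose is unnecessary.
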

\begin{proof} 
  First observe that $R$ is Cohen-Macaulay, since it admits a
  Tor-rigid maximal Cohen-Macaulay module (\cite[Theorem 4.3]{aus} or \cite[Corollary 4.7]{CZGS}). 

If $n=0$, $R$ is a fiber product ring.
In this case, $\pd_R N \le 1$, by Corollaries \ref{CMGAR} and \ref{cor:CMEXT}.  Since $N$ is maximal Cohen-Macaulay, $N$ is actually free, by the Auslander Buchsbaum Formula (Remark \ref{rem:ABF}).

Assume from now on that $n\ge 1$.
Putting $X:=\Omega_R^n( M)$, noting
 that $t-n\ge 1$, and shifting merrily, we obtain
\begin{equation}\label{eq:burp}
\Ext^i_R(X,N)=0 \ \ \  {\rm whenever}   \ \ t-n\leq i \leq t\,. \tag{\ref{evodthm00}.0}
\end{equation}
Put $Y_j = X/(x_1,\dots,x_j)X$, for $0\le j \le n$.  
Now $\underline x$ is $X$-regular by Lemma \ref{lem:syz}, and hence we have short exact sequences
\begin{equation}\label{eq:gulp}
0\longrightarrow Y_{j-1}\stackrel{x_{j}}\longrightarrow Y_{j-1} \longrightarrow Y_j\longrightarrow 0,\ \  1\le j \le n\,.
\tag{\ref{evodthm00}.1}
\end{equation}
 
 We want to deduce, from the long exact sequence of Ext, that
\begin{equation}\label{eq:aargh}
\Ext^t_R(Y_n,N)=0\,.\tag{\ref{evodthm00}.2}
\end{equation}
To  do this,  let $T$ be the isosceles right-triangular region 
$$T:= \{(i,j) \mid 0 \le j \le i +n-t \le n\},$$ with vertices $(t-n,0)$, $(t,0)$ and $(t,n)$ in the $(i,j)$ plane, and let 
$$S = \{(i,j)\in T \mid
\Ext_R^i(Y_j,N) = 0\}.$$ We claim that $S=T$.  The bottom leg of $T$,  namely, $\{(i,0) \mid t-n\le i \le t\}$, is
contained in $S$, by \eqref{eq:burp}.  Therefore, to prove the claim it suffices to show that
\begin{equation}\label{eq:bleat}
(i,j-1)\in S, \ (i+1,j-1)\in S, \text{ and } j \le n \implies (i+1,j) \in S\,. \tag{\ref{evodthm00}.3}
\end{equation}
But \eqref{eq:bleat} follows immediately from the following snippet of the long exact sequence of Ext
stemming from the $j^{\text{th}}$ short exact sequence \eqref{eq:gulp}:
$$
\to \Ext^i_R(Y_{j-1},N) \to \Ext^{i+1}_R(Y_j,N) \to \Ext_R^{i+1}(Y_{j-1},N)\to
$$ 
This verifies the claim that $S=T$.  In particular, $(t,n)$ belongs to $S$, and \eqref{eq:aargh} is verified.

Since $N$ is maximal Cohen-Macaulay, the $R$-regular sequence $\underline{x}$ is also $N$-regular, and so
Remark \ref{Mats-Formulas}  (a) shows that
\begin{equation}\label{eq:gasp}
\Ext^{t-n}_{R/(\underline{x})}(X/\underline{x}X,N/\underline{x}N)=0\,.
\end{equation}
Since $N$ is Tor-rigid, we see from Remark \ref{Mats-Formulas} (b) that  $N/{\underline x} N$ is a Tor-rigid $R/\underline{x}$-module.   Now $\overline R$ is a Cohen-Macaulay fiber product ring,  
 so  Corollaries \ref{cor:CMEXT} and  \ref{CMGAR} yield $\pd_{R/(\underline{x})}X/\underline{x}X \leq 1$ or $\pd_{R/(\underline{x})}N/\underline{x}N \leq 1.$  
By \cite[Lemma 1.3.5]{BH}, 
$ \pd_R X \leq 1$ or $\pd_R N \leq  1.$  

If $ \pd_R X \leq 1$, then 
 $\pd_RM< \infty$, and we are done. Assume $\pd_RN  \leq  1.$
The Auslander-Buchsbaum Formula (Remark \ref{rem:ABF}) and the fact that
$N$ is maximal Cohen-Macaulay now imply $\pd_R N =0$, and so  $N$ is  free.
\end{proof}

If $R$ is assumed to be Gorenstein, we can get by without the assumption that $N$ is maximal Cohen-Macaulay. 
For this we need Remark~\ref{JorExtsyz}.

\begin{remark} \label{JorExtsyz} \cite[Formula {\bf (1.2)}, p. 164]{HJ} Let   $R$  be a local Gorenstein ring, let $M$ be a finitely generated 
maximal  Cohen-Macaulay $R$-module, and let $N$ be a 
 finitely generated  $R$-module.  Then $ \Ext^i_R(M,R)= 0$, for every $i\ge 1$.  It follows that $ \Ext^i_R(M,N)= \Ext^{i+j}_R(M,\Omega^j_RN)$, for every $i\ge 1$ and $j\ge 0$.
\end{remark}
\begin{cor} \label{evodthm001}
 Let $R$ be a $d$-dimensional Gorenstein  quasi-fiber product ring
 with respect to
  the regular sequence $\underline{x}=x_1,\ldots,x_n$.  Let $M$ and $N$ be nonzero $R$-modules. If $N$ is Tor-rigid, and there is an integer $t > n$ such that
 $$
 \Ext^i_R(M,N)=0 \ \ \  {\rm whenever }   \ \ t\leq i \leq t+n\,,
 $$ 
 then $\pd_R M <\infty$ or $\pd_R N <\infty$.
\end{cor}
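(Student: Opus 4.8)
The plan is to reduce everything to the maximal Cohen--Macaulay setting and then apply Theorem~\ref{thm:Ext0}. First observe that, since $R$ is Gorenstein, Remark~\ref{rem:size} gives $n=\depth R-1$, and a Gorenstein ring is Cohen--Macaulay, so $d:=\dim R=\depth R=n+1$; in particular $d\ge 1$. The case $d=1$ (equivalently $n=0$, so that $R$ is a Gorenstein, hence Cohen--Macaulay, fiber product ring by Facts~\ref{factGorqfd1} and \ref{factGor}) is immediate: the hypothesis then reads $\Ext^t_R(M,N)=0$ for a single integer $t\ge 1$, and Corollary~\ref{CMGAR} (if $t=1$) or Corollary~\ref{cor:CMEXT} (if $t\ge 2$) gives the conclusion. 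So assume $d\ge 2$ and assume $\pd_RM=\infty$ (otherwise there is nothing to prove). Set $\widehat M:=\Omega^{t}_R M$. Since $t>n=d-1$ we have $t\ge d=\dim R$, so $\widehat M$ is maximal Cohen--Macaulay by Fact~\ref{fact:torsion}(iii), and $\widehat M\neq 0$ because $\pd_RM=\infty$.

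Next I would transport the vanishing hypothesis to the pair $(\widehat M,N)$. The standard dimension-shift isomorphism $\Ext^i_R(\widehat M,N)\cong\Ext^{\,i+t}_R(M,N)$ for $i\ge 1$, combined with $\Ext^i_R(M,N)=0$ for $t\le i\le t+n$, shows that $\Ext^i_R(\widehat M,N)=0$ for $1\le i\le n$. Since $\depth_RN\le\depth R=n+1$ we have $\max\{1,\depth_RN-2\}\le n$, so indeed $\Ext^i_R(\widehat M,N)=0$ for $i=1,\dots,\max\{1,\depth_RN-2\}$. Now apply Theorem~\ref{thm:Ext0} to $(\widehat M,N)$ (legitimate: $R$ is a quasi-fiber product ring and $N$ is nonzero and Tor-rigid): either $\pd_RN<\infty$, and we are done, or $\widehat M^{*}$ is free. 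In the latter case $\widehat M$ is reflexive, being maximal Cohen--Macaulay over the Gorenstein ring $R$ (Fact~\ref{fact:torsion}(i)), so $\widehat M\cong\widehat M^{**}\cong(\widehat M^{*})^{*}$ is free; hence $\pd_R\widehat M=0$, and patching the exact sequence $0\to\widehat M\to F_{t-1}\to\cdots\to F_0\to M\to 0$ with a free resolution of $\widehat M$ gives $\pd_RM\le t<\infty$, contradicting our assumption. Therefore $\pd_RN<\infty$, which completes the proof.

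The step I expect to be the main obstacle is the index bookkeeping at the borderline value $t=n+1=d$. The route one is tempted to take, given the placement of Remark~\ref{JorExtsyz}, is to deduce the corollary directly from Theorem~\ref{evodthm00}: replace $N$ by $N':=\Omega^d_RN$, which is maximal Cohen--Macaulay by Fact~\ref{fact:torsion}(iii) and still Tor-rigid (a syzygy of a Tor-rigid module is Tor-rigid, since $\Tor^R_{i+1}(N,-)\cong\Tor^R_i(\Omega_RN,-)$ for $i\ge 1$), and replace $M$ by $M':=\Omega^d_RM$; then Remark~\ref{JorExtsyz} (applicable as $M'$ is maximal Cohen--Macaulay over the Gorenstein ring $R$) together with the dimension shift yields $\Ext^i_R(M',N')\cong\Ext^i_R(M,N)$ for all $i\ge d+1$, so the vanishing transports to the interval $[\max\{t,d+1\},\,t+n]$. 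This interval has the required $n+1$ members with lower endpoint exceeding $n$ precisely when $t\ge n+2$; at $t=n+1$ it is one index too short, so a naive appeal to Theorem~\ref{evodthm00} fails. This is exactly why the argument above is routed through Theorem~\ref{thm:Ext0}, whose hypothesis asks only for the vanishing of $\Ext^i_R(-,N)$ up to degree $\max\{1,\depth_RN-2\}\le d-1$ rather than for a full block of $n+1=d$ consecutive degrees; that weaker requirement is met even in the borderline case, so no special treatment of $t=n+1$ is needed.
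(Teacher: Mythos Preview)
Your proof is correct, and it takes a genuinely different route from the paper's.  The paper does precisely what you call the ``naive'' approach: it sets $X=\Omega^d_RM$ and $Y=\Omega^d_RN$, notes that $Y$ is Tor-rigid and MCM, uses the first-variable syzygy shift together with Remark~\ref{JorExtsyz} to claim $\Ext^i_R(X,Y)=0$ for $t\le i\le t+n$, and then invokes Theorem~\ref{evodthm00}.  Your diagnosis of that route is accurate: the composite shift $\Ext^i_R(X,Y)\cong\Ext^i_R(M,N)$ is only available for $i\ge d+1$, so at the borderline $t=n+1=d$ the transported window is $[d+1,\,t+n]$, one index short of what Theorem~\ref{evodthm00} requires.  (Concretely, the paper's displayed line $\Ext^i_R(X,N)=0$ for $t-d\le i\le t+n-d$ includes $i=0$ when $t=d$, which the syzygy shift does not justify.)

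By contrast, your argument passes through Theorem~\ref{thm:Ext0} rather than Theorem~\ref{evodthm00}: taking $\widehat M=\Omega^t_RM$ (so that $\widehat M$ is MCM since $t\ge d$) and shifting only in the first variable yields $\Ext^i_R(\widehat M,N)=0$ for $1\le i\le n$, and since $\max\{1,\depth_RN-2\}\le\max\{1,n-1\}\le n$ this already meets the hypothesis of Theorem~\ref{thm:Ext0}.  The finish via reflexivity of MCM modules over a Gorenstein ring is clean and correct.  So your approach not only works but handles all $t>n$ uniformly, covering exactly the borderline case the paper's argument glosses over; the price is that you use the somewhat heavier Theorem~\ref{thm:Ext0} instead of the tailor-made Theorem~\ref{evodthm00}.
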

\begin{proof}  By 
Remark~\ref{rem:size}, $n=d-1$. By Fact~\ref{fact:torsion}(iii), $X:=\Omega_R^d(M)$ is Cohen-Macaulay. We have
$$
\Ext^i_R(X,N)=0 \ \ \  {\rm whenever}   \ \ t-d\leq i \leq t+n-d.$$

If $Y:=\Omega_R^d(N)$, the hypothesis that $R$ is Gorenstein and the 
fact that $X$ is maximal Cohen-Macaulay  imply, by  Remark~\ref{JorExtsyz}, that
$$
\Ext^i_R(X,Y)=0 \ \ \  {\rm whenever}   \ \ t\leq i \leq t+n\,.
$$

For $R$-modules $U$ and $V$, the isomorphisms $\Tor^R_m(U,\Omega V) \cong \Tor^R_{m+1}(U,V)$,
for all $m\ge 1$, imply that a syzygy of a Tor-rigid module is Tor-rigid.  Thus $Y$ is Tor-rigid and maximal
Cohen-Macaulay.  It follows from Theorem \ref{evodthm00} that at least one of $X$ and $Y$ has finite projective
dimension, and, of course, the same must hold for $M$ and $N$.
\end{proof}

Recall that an $R$-module $N$ is said to be a {\it rigid-test} module 
provided the following holds for all $R$-modules $Z$ (see \cite[Definition 2.3]{CZGS}):
$$
\Tor_n^R(Z,N)=0 \  \text{ for some }
 n\geq 1 \  \implies \  \Tor_{n+1}^R(Z,N)=0 \ {\rm and} \ \pd_RZ<\infty.
 $$

\begin{cor}\label{cor:rigid-test}
 Let $R$ be a Gorenstein quasi-fiber product ring
 with respect to 
 the sequence $\underline{x}=x_1,\ldots,x_n$. Let $M$ and $N$ be nonzero $R$-modules. 
Assume $N$ is a rigid-test module  and, for some $t>n$,
 $$
 \Ext^i_R(M,N)=0 \ \text{ whenever }   \\ t\leq i \leq t+n\,.
 $$
  Then $\pd_R M <\infty$ or $R$ is regular.
\end{cor}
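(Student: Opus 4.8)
The plan is to deduce this from Corollary~\ref{evodthm001} together with the extra strength built into the notion of a rigid-test module. First I would note that any rigid-test module $N$ is in particular Tor-rigid: the implication defining ``rigid-test'' has as one of its two conclusions exactly the statement $\Tor_{n+1}^R(Z,N)=0$, which (using that $\Tor$ is symmetric in its two arguments) is the defining property of Tor-rigidity. Consequently all the hypotheses of Corollary~\ref{evodthm001} are in force: $R$ is a Gorenstein quasi-fiber product ring with respect to $\underline{x}=x_1,\ldots,x_n$, the module $N$ is Tor-rigid, and there is an integer $t>n$ with $\Ext^i_R(M,N)=0$ for $t\le i\le t+n$. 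Hence Corollary~\ref{evodthm001} yields $\pd_RM<\infty$ or $\pd_RN<\infty$.

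If $\pd_RM<\infty$ we are already done, so the remaining task is to show that $\pd_RN<\infty$ forces $R$ to be regular. Write $p:=\pd_RN$. Computing $\Tor$ from a finite free resolution of $N$ shows that $\Tor^R_{p+1}(Z,N)=0$ for every $R$-module $Z$; in particular, taking $Z=k$, we obtain $\Tor^R_{p+1}(k,N)=0$. Since $p+1\ge 1$ and $N$ is a rigid-test module, the defining implication applied with index $n=p+1$ gives $\pd_Rk<\infty$, which is equivalent to $R$ being regular. This completes the argument.

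I do not expect a genuine obstacle here; the proof is essentially a packaging of Corollary~\ref{evodthm001} with the test-module hypothesis. The only two points that need any care are (i) verifying that ``rigid-test'' implies ``Tor-rigid'' so that Corollary~\ref{evodthm001} legitimately applies, and (ii) confirming that the index $n=p+1$ used in the rigid-test property is admissible, which it is because $p+1\ge 1$ always. Everything else is a direct chain of implications.
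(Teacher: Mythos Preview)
Your proof is correct and follows essentially the same route as the paper: apply Corollary~\ref{evodthm001} (after noting that rigid-test implies Tor-rigid) to obtain $\pd_RM<\infty$ or $\pd_RN<\infty$, and in the latter case use the test property of $N$ to conclude $R$ is regular. The only difference is that the paper invokes \cite[Theorem 1.1]{CZGS} for the last step, whereas you supply the direct argument (take $Z=k$, use $\Tor^R_{p+1}(k,N)=0$, and read off $\pd_Rk<\infty$ from the rigid-test definition).
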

\begin{proof}
Corollary \ref{evodthm001} implies $\pd_R M <\infty$  or $\pd_R N <\infty$, since $N$ is Tor-rigid. If $\pd_R M <\infty$, we are done. 

Suppose $\pd_R N <\infty$.  By  \cite[Theorem 1.1]{CZGS}, a local ring having a rigid test module of finite projective (or injective) 
dimension must be regular. Therefore the desired conclusion follows.
 \end{proof}

Perhaps it is worth stating the case $n = 0$ of Corollary \ref{cor:rigid-test}:

\begin{cor} Let $R$ be a Gorenstein fiber product ring. 
 Let $M$ and $N$ be nonzero $R$-modules such that  $N$ is a rigid-test module. If $\Ext^i_R(M,N)=0$ for some $i>1$, then $\pd_RM \leq 1$.\qed
\end{cor}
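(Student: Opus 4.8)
The plan is to derive this corollary directly from Corollary~\ref{cor:rigid-test} by specializing to the case $n=0$. First I would observe that a Gorenstein fiber product ring is precisely a Gorenstein quasi-fiber product ring with respect to the empty regular sequence (length $n=0$); this is immediate from Setting~\ref{fpset}, where the case $n=0$ is explicitly the fiber product case. So the hypotheses of Corollary~\ref{cor:rigid-test} are met with $n=0$.

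Next I would translate the Ext-vanishing hypothesis. With $n=0$, the condition ``$\Ext^i_R(M,N)=0$ whenever $t\le i\le t+n$'' for some $t>n$ becomes simply ``$\Ext^t_R(M,N)=0$ for some $t>0$,'' i.e., $\Ext^i_R(M,N)=0$ for some $i\ge 1$. Since the statement to be proved assumes $\Ext^i_R(M,N)=0$ for some $i>1$, this in particular gives a valid $t=i>n=0$, so Corollary~\ref{cor:rigid-test} applies and yields: $\pd_R M<\infty$ or $R$ is regular.

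Then I would handle the two alternatives. If $\pd_R M<\infty$, then since $R$ is a fiber product ring, Fact~\ref{pdfin1}(2) gives $\pd_R M\le 1$, which is the desired conclusion. If instead $R$ is regular, then every finitely generated module has finite projective dimension, so again $\pd_R M<\infty$, and Fact~\ref{pdfin1}(2) gives $\pd_R M\le 1$. (Alternatively, one notes that a regular local ring that is also a fiber product ring has $\depth R\le 1$ by Fact~\ref{pdfin1}(1), hence dimension at most $1$, so $\pd_R M\le 1$ for all $M$ directly.) In every case $\pd_R M\le 1$.

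There is essentially no obstacle here: the corollary is a clean specialization, and the only minor point to be careful about is confirming that ``$i>1$'' in the hypothesis of this corollary is at least as strong as the ``for some $t>n=0$'' required by Corollary~\ref{cor:rigid-test} — it is, since $i>1$ implies $i\ge 1$ with $i$ a legitimate choice of $t$. One could even weaken the hypothesis to $i\ge 1$, but stating it as $i>1$ keeps it parallel to Corollary~\ref{cor:CMEXT}.

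\begin{proof}
A Gorenstein fiber product ring is a Gorenstein quasi-fiber product ring with respect to the $R$-sequence of length $n=0$ (Setting~\ref{fpset}). The hypothesis that $\Ext^i_R(M,N)=0$ for some $i>1$ in particular provides an integer $t:=i>0=n$ with $\Ext^j_R(M,N)=0$ for all $j$ with $t\le j\le t+n$. Thus Corollary~\ref{cor:rigid-test} applies and gives: either $\pd_R M<\infty$ or $R$ is regular. In the latter case $\pd_R M<\infty$ as well. By Fact~\ref{pdfin1}(2), $\pd_R M<\infty$ over a fiber product ring forces $\pd_R M\le 1$.
\end{proof}
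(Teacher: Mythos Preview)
Your proof is correct and follows essentially the same route as the paper: specialize Corollary~\ref{cor:rigid-test} to $n=0$ and then invoke Fact~\ref{pdfin1}. The only difference is in how the ``$R$ is regular'' alternative is dispatched: the paper rules it out outright (a non-trivial fiber product ring has decomposable maximal ideal, hence is not a domain, hence not regular), whereas you allow it and note that regularity still gives $\pd_R M<\infty$, so Fact~\ref{pdfin1}(2) applies regardless. Both arguments are valid; the paper's is a touch sharper in that it shows the second alternative is actually vacuous.
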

\begin{proof} By Corollary \ref{cor:rigid-test}, one has $\pd_RM> \infty$ or $R$ is regular. Since  $R$ is a not a domain,
it cannot be regular. Therefore $\pd_R M <\infty$, and now Fact \ref{pdfin1} 
shows that $\pd_RM\le 1$.
\end{proof}


\noindent {\bf Some additional results.}  \ \ For the rest of this section, we investigate the vanishing of Ext without the assumption of Tor-rigidity. 

Recall that an $R$-module $M$ satisfies  {\it Serre's condition $(S_n)$} if 
\begin{equation*}\depth_{R_{\mathfrak{p}}}M_{\mathfrak{p}}\geq {\rm min}\{n, \dim R_{\mathfrak{p}}\},\tag{$\ast$}
\end{equation*}
 for all $\mathfrak{p} \in {\rm Supp}_R(M)$.  

(Warning: Some sources define
$(S_n)$ by the inequality 
\begin{equation}\depth_{R_{\mathfrak{p}}}M_{\mathfrak{p}}\geq {\rm min}\{n, \dim M_{\mathfrak{p}}\}.\tag{$\ast'$}
\end{equation}
The two definitions are not equivalent.  For example, using the definition with the 
 inequality ($*'$), every module of finite length satisfies $(S_n)$ for every $n$, whereas, by the definition using inequality 
 ($*$), a nonzero module of finite length over a ring of positive dimension does not even satisfy $(S_1)$.)

For Gorenstein quasi-fiber product rings,
we obtain the following result.

\begin{thm}\label{prop:jor} Let $R$ be a $d$-dimensional Gorenstein quasi-fiber product ring
with respect to the regular sequence ${\underline x} = x_1,\dots, x_n$.
Suppose that
 $$
 \Ext^i_R(M,M)=0 \  \  {\rm for }   \ \ 2\leq i \leq d+1\,.
 $$
 \begin{itemize}
     \item[(i)] If $M$ is a maximal Cohen-Macaulay $R$-module, then $M$ is free.
     \item[(ii)]  If $M$ satisfies $(S_k)$ for some $k\ge n$, then $\pd_RM \leq 1$.
 \end{itemize}
\end{thm}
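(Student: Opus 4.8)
The plan is to reduce both parts to the fiber-product case $R/(\underline x)$ by passing to a high syzygy and killing the regular sequence, then invoke the change-of-rings isomorphisms of Remark~\ref{Mats-Formulas} together with results already established for fiber product rings. Since $R$ is Gorenstein, Remark~\ref{rem:size} gives $n = d-1$, so the hypothesis $\Ext^i_R(M,M)=0$ for $2\le i\le d+1$ is vanishing over a range of length $d = n+1$. For part (i), if $M$ is maximal Cohen-Macaulay, then $\underline x$ is $M$-regular, and by Remark~\ref{JorExtsyz} the vanishing of $\Ext^i_R(M,M)$ can be shifted: $\Ext^i_R(M,M)\cong\Ext^{i+d}_R(M,\Omega^d_RM)$, and more usefully one arranges vanishing of $\Ext^i_R(\Omega^?_RM, M)$ over a suitable band so that Remark~\ref{Mats-Formulas}(a) yields $\Ext^j_{\overline R}(\overline M,\overline M)=0$ for an appropriate $j$ over the fiber product ring $\overline R = R/(\underline x)$, where $\overline M = M/(\underline x)M$. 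Then one appeals to the fiber-product version of (ARC) — Nasseh and Sather-Wagstaff's \cite{NS}, or more sharply Corollary~\ref{cor:arcminimal} — applied to $\overline M$ over $\overline R$, but one must be a little careful: the conclusion of (ARC) for fiber products requires vanishing of $\Ext^i_R(M,M\oplus R)$, whereas here we only have $\Ext^i_R(M,M)$. The resolution is that over the Gorenstein fiber product ring $\overline R$, which is a one-dimensional hypersurface by Fact~\ref{factGor}, the module $\overline M$ is also maximal Cohen-Macaulay, hence $\Ext^{\ge 1}_{\overline R}(\overline M,\overline R)=0$ automatically (Remark~\ref{JorExtsyz}), so vanishing of $\Ext^i_{\overline R}(\overline M,\overline M)$ over a band suffices to force $\overline M$ free, whence $\pd_R M<\infty$ by \cite[Lemma 1.3.5]{BH}, and then $\pd_R M=0$ by the Auslander--Buchsbaum Formula (Remark~\ref{rem:ABF}) since $M$ is maximal Cohen-Macaulay.

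For part (ii), the idea is to replace $M$ by $X := \Omega^{d}_R M$ (or $\Omega^n_R M$), which is maximal Cohen-Macaulay by Fact~\ref{fact:torsion}(iii) since $d\ge\dim R$; the Serre condition $(S_k)$ with $k\ge n$ is exactly what is needed to control the relationship between $M$ and its syzygies — concretely, it should give that if $X$ turns out to have finite projective dimension then so does $M$, and that the depth bookkeeping produces $\pd_R M\le 1$ rather than merely $\pd_R M<\infty$. Applying part (i) (or its proof) to $X$ shows $X$ is free, hence $\pd_R M\le d$; one then needs to trim this down to $\pd_R M\le 1$. Here I would use that over a Gorenstein ring $\Omega^j_R M$ being free forces, by the Auslander--Buchsbaum Formula applied to $M$ together with the $(S_k)$ hypothesis (which bounds $\depth M$ from below away from the bad primes and, combined with $\pd_R M<\infty$, controls $\depth M$), that $\depth M\ge d-1$, so $\pd_R M = \depth R - \depth M\le 1$. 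The role of $k\ge n$ is precisely to ensure $\depth M\ge \min\{n,d\} = n = d-1$ at the maximal ideal after one knows $\pd_R M<\infty$; alternatively one localizes and uses that $(S_n)$ plus finite projective dimension forces $\pd\le 1$ globally via \cite[Lemma 1.3.5]{BH} and Fact~\ref{pdfin1}(2) on the fiber quotient.

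The main obstacle I anticipate is the careful shifting-of-$\Ext$ bookkeeping in getting from the hypothesis $\Ext^i_R(M,M)=0$ for $2\le i\le d+1$ down to a clean vanishing statement over $\overline R$ of the form needed to invoke the fiber-product (ARC). One has to combine three shifts — replacing $M$ by a syzygy (Fact~\ref{fact:torsion}(iii)), using Gorenstein-ness to move $\Ext$ across syzygies in both arguments (Remark~\ref{JorExtsyz}), and the $n$-fold Rees/change-of-rings shift of Remark~\ref{Mats-Formulas}(a) — and track exactly which band $[t, t+n]$ of $\Ext^i_R(X,Y)$-vanishing one ends up with, checking that the lower endpoint exceeds $n$ so that Theorem~\ref{evodthm00}'s hypotheses (or directly Corollaries~\ref{CMGAR} and \ref{cor:CMEXT} over $\overline R$) are met. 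The secondary subtlety is justifying, in part (ii), that the $(S_k)$ hypothesis upgrades "$\pd_R M<\infty$" to "$\pd_R M\le 1$" rather than the weaker conclusion of part (i); this is where the hypothesis genuinely enters and must be used, not just invoked.
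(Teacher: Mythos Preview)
Your overall strategy for part (i) --- reduce to $\overline R = R/(\underline x)$, which is a one-dimensional hypersurface by Fact~\ref{factGor}, and use that $\overline M$ is MCM there --- is the right one, and it is what the paper does. However, you misidentify the tool that finishes the job. After killing $\underline x$ in the first argument via the short exact sequences $0\to Y_{j-1}\xrightarrow{x_j}Y_{j-1}\to Y_j\to 0$ and then applying Remark~\ref{Mats-Formulas}(a), the vanishing band $[2,d+1]$ collapses to the \emph{single} vanishing $\Ext^2_{\overline R}(\overline M,\overline M)=0$ (since $n=d-1$). Neither Corollary~\ref{cor:arcminimal} (which requires vanishing on $[1,6]$) nor Corollaries~\ref{CMGAR}/\ref{cor:CMEXT} or Theorem~\ref{evodthm00} (which require Tor-rigidity, not assumed here) can be applied. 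The paper instead invokes \cite[Proposition~2.5]{Jor} (equivalently, the Avramov--Buchweitz Theorem~\ref{TAvrBuc}): over a hypersurface, the single vanishing $\Ext^2(\overline M,\overline M)=0$ already forces $\pd_{\overline R}\overline M<\infty$, hence $\le 1$ by Fact~\ref{pdfin1}. Note also that the paper does \emph{not} pass to syzygies of $M$ in either argument; the mention of $\Omega^?_RM$ and Remark~\ref{JorExtsyz} is a detour. The only role of ``$M$ is MCM'' (equivalently, via \cite[Corollary~A.15]{LW}, ``$M$ is a $d$th syzygy'') is to make $\underline x$ regular on $M$ by Lemma~\ref{lem:syz}.

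Your plan for part (ii) has a genuine gap. Replacing $M$ by $X=\Omega^d_R M$ and applying part (i) requires $\Ext^i_R(X,X)=0$ for $2\le i\le d+1$, but you cannot get this from the hypothesis: shifting in the first argument gives $\Ext^i_R(X,M)\cong\Ext^{i+d}_R(M,M)$, which vanishes only for $i=1$; shifting again in the second argument via Remark~\ref{JorExtsyz} yields only $\Ext^{d+1}_R(X,X)=0$, a single index. The paper's approach is simpler and avoids this entirely: the hypothesis $(S_k)$ with $k\ge n$ is used \emph{only} to conclude (via \cite[Corollary~A.15]{LW}) that $M$ is a $k$th syzygy, hence that $\underline x$ is $M$-regular by Lemma~\ref{lem:syz}. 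Then the argument of part (i) runs verbatim with $M$ itself, yielding $\pd_{\overline R}\overline M\le 1$ and thus $\pd_R M\le 1$ by \cite[Lemma~1.3.5]{BH}. No trimming step is needed.
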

\begin{proof}
(i) Since $M$ is maximal Cohen-Macaulay over a Gorenstein ring, $M$ is a $d^{\text{th}}$ syzygy, by 
\cite[Corollary A.15]{LW}. Then Lemma \ref{lem:syz} implies that $\underline{x}$ is an $M$-sequence. 
As in the proof of Theorem \ref{evodthm00}, apply  the long exact sequence of Ext to the short exact sequences
$$
0\to M/(x_1,\ldots,x_{j-1})M\stackrel{x_j}\to M/(x_1,\ldots,x_{j-1})M \to M/(x_1,\ldots,x_{j})M \to 0\,,
$$ 
for $1\leq j\leq n$. We deduce  that
$$
\Ext^i_R(M/\underline{x}M,M)=0 \ \  {\rm for}   \ \ 2+n\leq i \leq d+1.
$$

Now \ref{Mats-Formulas} (a) shows that  
\begin{equation}\label{eq:wheeze}
\Ext^i_{R/(\underline{x})}(M/\underline{x}M,M/\underline{x}M)=0 \ \     {\rm for}   \ \ 2\leq i \leq 1+d-n. \tag{\ref{prop:jor}.0}
\end{equation}
Since $R/(\underline{x})$ is a Gorenstein fiber product ring, 
Fact \ref{factGor} yields that $R/(\underline{x})$  is a  $1$-dimensional hypersurface. 
Hence $n=d-1$ by 
Remark~\ref{rem:size}, and hence \eqref{eq:wheeze} just says
that $\Ext^2_{R/(\underline{x})}(M/\underline{x}M,M/\underline{x}M)=0$.
Now \cite[Proposition 2.5]{Jor} yields
$\pd_{R/(\underline{x})} M/\underline{x}M \leq 1$.  

Therefore $\pd_R M\leq 1$. Since $M$ is maximal Cohen-Macaulay, $M$ is free by the Auslander-Buchsbaum Formula.

(ii) Since $M$ satisfies $(S_k)$ for some $k\ge n$ and $R$ is Gorenstein, $M$ is  $k^{\text{th}}$ syzygy 
by \cite[Corollary A.15]{LW}, and Lemma \ref{lem:syz} says that $\underline{x}$ is also an $M$-sequence. The proof is now similar to the proof of part (i). \end{proof}

The next result is a consequence of Theorem \ref{prop:jor}, since Gorenstein fiber product rings 
 are $1$-dimensional (see Fact \ref{factGor}).

\begin{cor}\label{cor:jor1} Let $R$ be a Gorenstein fiber product ring.
 Let $M$ be a maximal Cohen-Macaulay $R$-module. If  $\Ext^2_R(M,M)=0$, then $M$ is free.
\end{cor}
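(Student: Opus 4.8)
The plan is to derive Corollary~\ref{cor:jor1} as the special case $n=0$ of Theorem~\ref{prop:jor}, so the main work is simply checking that the hypotheses match up. First I would invoke Fact~\ref{factGor}: since $R$ is a Gorenstein fiber product ring, it is a $1$-dimensional hypersurface, so $d:=\dim R=1$. Moreover, a fiber product ring is by definition a quasi-fiber product ring with respect to the empty regular sequence, so $n=0$ in the language of Theorem~\ref{prop:jor}. (Consistently, Remark~\ref{rem:size} forces $n=\depth R-1=d-1=0$ in the Gorenstein case.)

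Next I would observe that the hypothesis $\Ext^2_R(M,M)=0$ is exactly the hypothesis ``$\Ext^i_R(M,M)=0$ for $2\le i\le d+1$'' of Theorem~\ref{prop:jor}, because $d+1=2$. Since $M$ is assumed to be a maximal Cohen-Macaulay $R$-module, part~(i) of Theorem~\ref{prop:jor} applies verbatim and yields that $M$ is free, which is the desired conclusion.

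For completeness, I would note one could also run the argument directly: with $n=0$, $\underline x$ is empty, so $M/\underline x M=M$ and the reduction steps in the proof of Theorem~\ref{prop:jor}(i) are vacuous; one is immediately in the situation of a $1$-dimensional Gorenstein fiber product ring with $\Ext^2_R(M,M)=0$, and Jorgensen's \cite[Proposition 2.5]{Jor} gives $\pd_R M\le 1$, whence $M$ is free by the Auslander-Buchsbaum Formula (Remark~\ref{rem:ABF}) together with the fact that $M$ is maximal Cohen-Macaulay. I do not anticipate any real obstacle here; the only point requiring a moment's care is the bookkeeping that identifies a fiber product ring with the $n=0$ case and that $d=1$, both of which are already recorded in Fact~\ref{factGor} and Remark~\ref{rem:size}.
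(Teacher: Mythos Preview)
Your proposal is correct and matches the paper's approach exactly: the paper simply notes that a Gorenstein fiber product ring is $1$-dimensional by Fact~\ref{factGor}, so Corollary~\ref{cor:jor1} is the case $d=1$, $n=0$ of Theorem~\ref{prop:jor}(i). Your additional direct argument via \cite[Proposition~2.5]{Jor} and the Auslander--Buchsbaum Formula is also fine and in fact just unwinds the proof of Theorem~\ref{prop:jor}(i) in this degenerate case.
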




Since a Gorenstein fiber product ring 
is a hypersurface (Fact \ref{factGor}), and hence a complete intersection,  
Corollary \ref{cor:jor1} agrees with \cite[Proposition 2.5]{Jor}. 

Remarks~\ref{AvrBuc} contains a summary of related terminology and results from Avramov's
and Buchweitz' article  \cite{AvBu};  Theorem~\ref{TAvrBuc} and Corollary~\ref{cAvBu} follow from these remarks.


\begin{remarks} \label{AvrBuc}  \cite{AvBu} Let $R$ be a  a Noetherian ring.

(1) For $R$ local, a {\it quasi-deformation} (of codimension c)  is a diagram of local homomorphisms 
$\CD R @>>>  R' @<<< Q 
\endCD$,
 the first being faithfully flat and the second surjective with kernel generated by a $Q$-regular sequence 
 (of length c). If $(R,\m)$ is a complete intersection  (Remark~\ref{rem:rank}), then there exists a quasi-deformation $\CD R @>>>  \widehat R @<<< Q 
\endCD.$

(2) Let $M\ne 0$ be a finitely generated module over 
$R$.
 If R is local, 

\noindent the {\it complete intersection dimension} over R is defined by 
$$\aligned\text{CI-}{\dim}_RM :=\inf\{&\pd(M\otimes_R R')- \pd_RR' ,
\text{  such that } \\
&\CD R@>>>R'@<<<Q\endCD\text { is a quasi-deformation}\}.
\endaligned
$$


\noindent For $R$ not local, the {\it complete intersection dimension} over R is defined by
$$\text{CI-}{\dim}_R M := \sup\{\text{CI-}{\dim}_R  \m   ~| ~\m \in \Max(R)\}; \quad \text{ CI-}{\dim}_R  0 = 0.$$

(3) 
\cite[4.1,4.1.5]{AvBu} Let R be a local ring, let $M$ be a finitely generated  $R$-module with CI-${\dim}_R M<\infty$, and let 
 $\CD R@>>>R'
@<<<Q\endCD$ be a quasi-deformation of codimension $c$ such that the module 
$M' = M \otimes_R R'$ has finite projective dimension over $Q$.
By \cite[(1.4)]{AGP}, the Auslander-Buchsbaum Formula  extends to: 
 $\text{CI-}{\dim}_R M =\depth R - \depth_R M\le \depth R \le \dim R.$ Thus $\text{CI-}{\dim}_R M <\infty$.


(4) \cite[5.1]{AvBu} If R is a local complete intersection and  $M$ is a finitely generated $R$-module, then
$\CD \pd_Q(M \otimes_R \widehat R) <\infty,
\endCD
$ for any quasi-deformation 

$\phantom{local}\CD R @>>> \widehat R @<<< Q 
\endCD$, with $Q$ a
regular ring.  

(5)  \cite[Theorem 4.2]{AvBu} Let $M$ be a  finitely generated  module of finite CI-dimension over  $R$. 
Then $M$ has finite projective dimension if and only if $\Ext^{2i}_
R (M, M) = 0$ for
some $i \ge 1.$

\end{remarks}
Remarks~\ref{AvrBuc} yield Theorem~\ref{TAvrBuc},  due to Avramov and Buchweitz. 

\begin{theorem} \label{TAvrBuc} \cite[Theorem 4.2, (5.1), p. 24]{AvBu} If $R$ is a local complete intersection and $M$ is a finitely generated module such that
$\Ext^{2i}_R(M,M)=~0$, for some $i\ge1$, then $M$ has finite
projective dimension. 
\end{theorem}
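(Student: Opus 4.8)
The plan is to reduce to the case where $R$ is complete and to invoke the machinery of complete intersection dimension summarized in Remarks~\ref{AvrBuc}. First I would recall that a local complete intersection $R$ has, by definition, a quasi-deformation $\CD R @>>> \widehat R @<<< Q \endCD$ with $Q$ a complete regular local ring. By part (4) of Remarks~\ref{AvrBuc}, the module $\widehat M := M \otimes_R \widehat R$ has finite projective dimension over $Q$; hence $\text{CI-}\dim_R M < \infty$ by part (3). This puts $M$ in the scope of part (5) of Remarks~\ref{AvrBuc} (equivalently, Avramov--Buchweitz \cite[Theorem 4.2]{AvBu}).

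The second step is simply to apply that criterion: part (5) states that a finitely generated module $M$ of finite CI-dimension over a local ring has finite projective dimension if and only if $\Ext^{2i}_R(M,M) = 0$ for some $i \ge 1$. Since we are given $\Ext^{2i}_R(M,M) = 0$ for some $i \ge 1$, the ``if'' direction yields $\pd_R M < \infty$ immediately. That is the entire content of the conclusion, so in fact the theorem is essentially a restatement of part (5) of Remarks~\ref{AvrBuc} specialized to the complete-intersection case, once one observes that complete intersections automatically have modules of finite CI-dimension.

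Alternatively, if one wanted a self-contained argument bypassing the general finite-CI-dimension theory, one could pass directly to $Q$: the vanishing $\Ext^{2i}_R(M,M) = 0$ ascends to $\Ext^{2i}_{\widehat R}(\widehat M, \widehat M) = 0$ by flat base change, and then one would transfer the vanishing across the surjection $Q \twoheadrightarrow \widehat R$ using the fact that $\widehat R = Q/(\underline f)$ for a $Q$-regular sequence $\underline f$ together with the Eisenbud/Gulliksen operator theory on $\Ext$ over $Q$; the eventual-periodicity of $\Ext$ over a complete intersection then forces all high $\Ext^j_{\widehat R}(\widehat M,\widehat M)$ to vanish, whence $\pd_Q \widehat M < \infty$ and so $\pd_R M < \infty$.

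The main obstacle, in the self-contained route, is the passage from the periodicity of even $\Ext$ to the vanishing of all $\Ext$ in high degrees — this is precisely the nontrivial input of \cite{AvBu}, relying on the module structure of $\Ext^*_{\widehat R}(\widehat M,\widehat M)$ over the polynomial ring of cohomology operators and a gap/regularity argument. Given that Remarks~\ref{AvrBuc} already records part (5) as an available result, however, the cleanest presentation is the short one: verify $\text{CI-}\dim_R M < \infty$ via parts (1)--(4), then quote part (5). There is essentially nothing to prove beyond assembling these cited facts, so I would keep the proof to two or three sentences.
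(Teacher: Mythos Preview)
Your proposal is correct and follows essentially the same route as the paper's proof: invoke the quasi-deformation to $\widehat R$ and $Q$, use part (4) of Remarks~\ref{AvrBuc} to get $\pd_Q(M\otimes_R\widehat R)<\infty$, deduce $\text{CI-}\dim_R M<\infty$ via part (3), and then apply part (5). The paper's proof is exactly the three-sentence version you describe at the end.
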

\begin{proof} Let $\CD R @>>> \widehat R @<<< Q 
\endCD$ be a  quasi-deformation with $Q$ a complete regular local
 ring.   By (4) of Remarks~\ref{AvrBuc}, $\CD \pd_Q(M \otimes_R \widehat R) <\infty.
\endCD
$ By  (3)  of Remarks~\ref{AvrBuc},  $\text{CI-}{\dim}_R M <\infty$.       Now (5) implies   $\pd(M)<\infty$.
\end{proof}
 By  
 Fact~\ref{pdfin1}, we have a generalization of Corollary~\ref{cor:jor1}:
\begin{corollary}\label{cAvBu}
 If $R$ is a fiber product complete intersection ring and $M$ is a finitely generated module such that
$\Ext^{2i}_R(M,M)=0$ for some $i\ge1$, ring, 
then $\pd_RM \le1$.
\end{corollary}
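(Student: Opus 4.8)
The plan is to assemble two results already established in the paper: Theorem~\ref{TAvrBuc} and Fact~\ref{pdfin1}. First I would observe that, by the conventions of Setting~\ref{fpset}, a fiber product ring is local; hence a fiber product complete intersection ring $R$ is a local complete intersection, so Theorem~\ref{TAvrBuc} applies verbatim. Therefore the hypothesis that $\Ext^{2i}_R(M,M)=0$ for some $i\ge 1$ already forces $\pd_R M<\infty$.

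Next I would invoke Fact~\ref{pdfin1}(2): over any fiber product ring, a finitely generated module of finite projective dimension has projective dimension at most $1$. (This is just the Auslander--Buchsbaum Formula of Remark~\ref{rem:ABF} combined with Fact~\ref{pdfin1}(1), which says $\depth R\le 1$ for a fiber product ring.) Applying this to $M$ yields $\pd_R M\le 1$, which is the assertion of the corollary.

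There is essentially no obstacle here: the statement is a direct consequence of Theorem~\ref{TAvrBuc} together with Fact~\ref{pdfin1}, and the only point to verify is that the hypotheses of Theorem~\ref{TAvrBuc}---that $R$ be local and a complete intersection---are satisfied, which is immediate. As an alternative route, one could bypass Fact~\ref{pdfin1}(2) altogether: since a complete intersection is Gorenstein, Fact~\ref{factGor} shows that $R$ is in fact a one-dimensional hypersurface, so $\depth R=1$, and once $\pd_R M<\infty$ is known the Auslander--Buchsbaum Formula (Remark~\ref{rem:ABF}) gives $\pd_R M\le\depth R=1$ directly.
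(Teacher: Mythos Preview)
Your proposal is correct and matches the paper's approach: the paper states Corollary~\ref{cAvBu} immediately after Theorem~\ref{TAvrBuc} with the preamble ``By Fact~\ref{pdfin1},'' so the intended proof is exactly Theorem~\ref{TAvrBuc} followed by Fact~\ref{pdfin1}(2). Your alternative route via Fact~\ref{factGor} is also valid but not the one the paper signals.
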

 
 Using Avramov's and Buchweitz'  Theorem~\ref{TAvrBuc}  and arguments similar to those in the proof of  Theorem \ref{evodthm00}, we deduce the folloiwng: 
 \begin{prop}\label{prop:aardvark}
 Let $R$ be a Gorenstein quasi-fiber product ring
 with respect to the sequence $\underline{x}=x_1,\ldots,x_n$. Let $M$ be an $R$-module. If
  for some even number   $t>n$
 $$
 \Ext^i_R(M,M)=0 \ \ \  {\rm whenever}   \ \ t\leq i \leq t+n,
 $$
 then $\pd_R M <\infty$.
\end{prop}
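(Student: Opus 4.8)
The plan is to imitate the proof of Theorem~\ref{evodthm00}: replace $M$ by a high enough syzygy on which $\underline x$ is a regular sequence, transfer the $\Ext$-vanishing to that syzygy, cut down modulo $\underline x$ to the fiber product ring $R/(\underline x)$, and finish there with the Avramov--Buchweitz criterion (Theorem~\ref{TAvrBuc}). Concretely, write $d=\dim R$. Since $R$ is Gorenstein, Remark~\ref{rem:size} gives $n=d-1$, so $\overline R:=R/(\underline x)$ is a Gorenstein fiber product ring, hence by Fact~\ref{factGor} a one-dimensional hypersurface, in particular a complete intersection. Put $X:=\Omega_R^dM$. By Fact~\ref{fact:torsion}(iii) $X$ is maximal Cohen--Macaulay, and since $X$ is the $n$th syzygy of $\Omega_R^{d-n}M$, Lemma~\ref{lem:syz} shows that $\underline x$ is an $X$-sequence. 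Because $\pd_RM<\infty$ if and only if $\pd_RX<\infty$, it suffices to prove $\pd_RX<\infty$.

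The first real step is to transfer the hypothesis from $M$ to $X$. Combining the first-slot shift $\Ext^i_R(\Omega_R^dM,M)\cong\Ext^{i+d}_R(M,M)$ for $i\ge 1$ with Remark~\ref{JorExtsyz} (applicable because $X$ is maximal Cohen--Macaulay over the Gorenstein ring $R$, so $\Ext^{\ge 1}_R(X,R)=0$ and second-slot syzygies may be absorbed), one obtains $\Ext^j_R(X,X)\cong\Ext^j_R(M,M)$ for $j\ge d+1$; and from the presentation $0\to\Omega_R^1M\to F_0\to M\to 0$ together with $\Ext^{d+1}_R(M,F_0)=0$ (as $\id_RF_0=d$) one gets the one-sided implication $\Ext^d_R(M,M)=0\Rightarrow\Ext^d_R(X,X)=0$. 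Since $t>n=d-1$ forces $t\ge d$, the hypothesis $\Ext^i_R(M,M)=0$ for $t\le i\le t+n$ therefore yields $\Ext^j_R(X,X)=0$ for all $j$ with $t\le j\le t+n$.

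Now I would run the triangular induction from the proof of Theorem~\ref{evodthm00} verbatim: with $Y_j:=X/(x_1,\dots,x_j)X$ and the short exact sequences $0\to Y_{j-1}\stackrel{x_j}{\longrightarrow}Y_{j-1}\to Y_j\to 0$ ($1\le j\le n$, available since $\underline x$ is $X$-regular), the long exact sequences of $\Ext_R(-,X)$ propagate the $n+1$ vanishings just obtained up to $\Ext^{t+n}_R(\overline X,X)=0$, where $\overline X:=X/\underline xX=Y_n$. Since $\underline x$ is $X$-regular and annihilates $\overline X$, the change-of-rings isomorphism Remark~\ref{Mats-Formulas}(a) gives $\Ext^t_{\overline R}(\overline X,\overline X)=0$. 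As $t$ is even and $\overline R$ is a complete intersection, Theorem~\ref{TAvrBuc} yields $\pd_{\overline R}\overline X<\infty$, and finally \cite[Lemma 1.3.5]{BH} lifts this across the $X$-regular sequence $\underline x$ to $\pd_RX<\infty$, hence $\pd_RM<\infty$.

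The step I expect to be the main obstacle is the transfer in the second paragraph, specifically securing the boundary degree $j=t$ of the vanishing range for $\Ext_R(X,X)$ in the tight case $t=n+1=d$ (which can occur only when $d$ is even), where the clean isomorphism $\Ext^j_R(X,X)\cong\Ext^j_R(M,M)$ is available only for $j\ge d+1$ and one must fall back on the one-sided implication at $j=d$; everything downstream is then bookkeeping already carried out for Theorem~\ref{evodthm00}. Alternatively, one can bypass the reduction entirely: because $\overline R=R/(\underline x)$ is a complete intersection and $\underline x$ is an $R$-regular sequence, $R$ is itself a complete intersection, so Theorem~\ref{TAvrBuc} applies directly to the hypothesis $\Ext^t_R(M,M)=0$ with $t$ even.
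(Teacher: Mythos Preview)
Your main argument is correct and follows the paper's architecture: pass to a syzygy $X$ on which $\underline x$ is regular, run the triangular induction in the first variable, change rings to $\overline R=R/(\underline x)$, and invoke Avramov--Buchweitz. The paper takes $X=\Omega_R^nM$ and aims for $\Ext^t_R(X,X)=0$ in order to apply Theorem~\ref{TAvrBuc} over $R$ itself; your choice $X=\Omega_R^dM$ (which is MCM) together with Remark~\ref{JorExtsyz} makes the second-slot transfer from $M$ to $X$ transparent, and your handling of the boundary case $t=d$ is careful, whereas in the paper this transfer is compressed. You instead apply Theorem~\ref{TAvrBuc} over the hypersurface $\overline R$ and lift via \cite[Lemma~1.3.5]{BH}; both endgames are legitimate.

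More to the point, your closing observation supersedes both arguments. Since $R/(\underline x)$ is a Gorenstein fiber product ring, Fact~\ref{factGor} makes it a one-dimensional hypersurface; as $\underline x$ is $R$-regular, $R$ is itself a complete intersection. Hence Theorem~\ref{TAvrBuc} applies directly to the single vanishing $\Ext^t_R(M,M)=0$ with $t$ even, giving $\pd_RM<\infty$ in one line. Neither the syzygy reduction nor the remaining $n$ vanishing conditions are actually needed.
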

\begin{proof} Put $X=\Omega_R^nM$.  By Lemma~\ref{lem:syz}, $\underline{x}$ is $X$-regular.  Then
$$
\Ext^i_R(X,M)=0\  \text{for}\  t-n\le i \le t \,.
$$
With $Y_n = X/\underline{x} X$ and $N=X$, we obtain Equation~\eqref{eq:aargh}, exactly as  in the proof of  Theorem \ref{evodthm00}.
Thus
$$
\Ext^{t-n}_{R/\underline{x}R}(X/\underline{x}X,X/\underline{x}X) = 0.
$$
Now Remark~\ref{Mats-Formulas} shows that $\Ext_R^t(X,X)=0$.  Avramov's and Buchweitz's Theorem~\ref{TAvrBuc} implies
$\pd_R X < \infty$,
since $t$ is an even positive integer.     Since $X$ is a syzygy of $M$, it follows that 
$\pd_RM<\infty$.
\end{proof}


The next result provides a bound $b$ such that if $\Ext^i_R(M,M\oplus R)=0, $ for every $i$ with $1\leq i \leq b,$ then $M$ is free. This gives a positive answer to Question \ref{question1} in case $N=M\oplus R$ and  improves Corollary \ref{cor:arc}.

\begin{thm}\label{thm:arcminimal}
Let $R$ be a quasi-fiber product ring
with respect to the sequence 
  $\underline{x}=x_1,\ldots,x_n$. Let $M$ be a finitely generated  $R$-module. If $t\geq {\rm max}\{5,n+1\}$ and  
  $$\Ext^i_R(M,M\oplus R)=0, \text{ for every }i\text{ with }1\leq i \leq t+\depth R+n,$$  then $M$ is free. Thus 
  $$\aligned n>4,& \Ext^i_R(M,M\oplus R)=0,\text{ for }1\leq i \leq 3n+2 \implies
M\text{ is free.}\\
 n\le 4,& \Ext^i_R(M,M\oplus R)=0,\text{ for }1\leq i \leq  5+2n+1 \implies
M\text{ is free.}\\
n\le 4,& \Ext^i_R(M,M\oplus R)=0,\text{ for }1\leq i \leq   14 \implies
M\text{ is free.}
\endaligned
$$
\end{thm}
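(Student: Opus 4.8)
The plan is to reduce the statement to the fiber-product case $n=0$, where we already have strong results, by applying the syzygy/change-of-rings machinery developed for Theorem~\ref{evodthm00} and Proposition~\ref{prop:aardvark}. First I would handle the case $\pd_R M<\infty$ separately: by the Auslander-Buchsbaum Formula (Remark~\ref{rem:ABF}) together with Remark~\ref{rem:size}, one gets $\pd_R M\le\depth R\le n+1$, and since $\Ext^i_R(M,R)=0$ for $1\le i\le n+1$ forces $\pd_R M=0$, the module $M$ is free. So from now on I would assume $\pd_R M=\infty$ and aim for a contradiction.

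The main step is to use the vanishing $\Ext^i_R(M,M)=0$ in the range $1\le i\le t+\depth R+n$ to derive $\pd_R M<\infty$, which then contradicts the standing assumption (or, turned around, directly gives freeness after the projective-dimension bound). Here I would pass to $X:=\Omega^n_R M$, which by Lemma~\ref{lem:syz} has $\underline x$ as a regular sequence, and shift: $\Ext^i_R(X,X)=0$ for a correspondingly shifted range. Then, exactly as in the triangular-region argument of Theorem~\ref{evodthm00} (the set $S=T$ induction via the long exact sequence of Ext attached to the Koszul-type short exact sequences \eqref{eq:gulp}), I would kill the variables one at a time to obtain $\Ext^i_{R/(\underline x)}(X/\underline xX,\,X/\underline xX)=0$ for all $i$ in a window of length roughly $t+\depth R$ starting near $t$, using Remark~\ref{Mats-Formulas}(a). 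Over the fiber product ring $\overline R=R/(\underline x)$, which has $\depth\le1$ by Fact~\ref{pdfin1}(1), I would now invoke Corollary~\ref{cor:CMEXT} (or, if one prefers the even-index route, Proposition~\ref{prop:aardvark}/Theorem~\ref{TAvrBuc} when $\overline R$ is a complete intersection, but in general Corollary~\ref{cor:CMEXT} does not apply since it needs Tor-rigidity). The cleaner route is: since we also have $\Ext^i_R(M,R)=0$ for $1\le i\le\depth R$, the module $M$ is free iff $M^\ast$ is free iff $\pd_R M<\infty$ (via Corollary~\ref{GAR}-type reasoning), so it suffices to show $\pd_{\overline R}(X/\underline xX)<\infty$; and over a fiber product ring finite projective dimension is at most $1$ by Fact~\ref{pdfin1}(2). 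To get finite projective dimension over $\overline R$ without Tor-rigidity, I would use that the surviving Ext-vanishing window for $\Ext_{\overline R}(X/\underline xX, X/\underline xX)$ is long (length $\ge 5$ once $t\ge5$), hence contains an even index $>0$, and apply Avramov-Buchweitz (Theorem~\ref{TAvrBuc}) — but that needs $\overline R$ to be a complete intersection, which fails for a general fiber product. So instead I would appeal to Nasseh-Takahashi (Remarks~\ref{rem:classNT}) directly at the level of $R$: the long vanishing range $t+\depth R+n\ge t+\depth R$ for $\Ext^i_R(M,M)$, combined with the Auslander sequence \eqref{key} to convert $\Ext$-vanishing into $\Tor$-vanishing of $\D\Omega^i M$ against $M$, feeds into \cite[Corollary 6.5]{NT} (Remarks~\ref{rem:classNT}(1)) to conclude $\pd_R M<\infty$ or $\pd_R(\text{some transpose})<\infty$; chasing this as in the proof of Theorem~\ref{thm:Ext0} yields $\pd_R M<\infty$, the desired contradiction.

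Once $M$ is known to have finite projective dimension, the Auslander-Buchsbaum Formula and the vanishing $\Ext^i_R(M,R)=0$ for $1\le i\le\depth R$ force $M$ to be free, completing the first assertion. For the three numerical consequences, I would simply plug in $t=\max\{5,n+1\}$ and $\depth R\le n+1$ (Remark~\ref{rem:size}): when $n>4$ one takes $t=n+1$, so $t+\depth R+n\le (n+1)+(n+1)+n=3n+2$; when $n\le4$ one takes $t=5$, so $t+\depth R+n\le 5+(n+1)+n=2n+6$; and since $n\le4$ gives $2n+6\le14$, the uniform bound $14$ follows. The one point requiring care in writing these out is that $\depth R$ may equal $n$ rather than $n+1$, but using the upper bound $n+1$ throughout is harmless.

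The hard part, and the step I expect to be the real obstacle, is producing finite projective dimension over the fiber product ring $\overline R$ \emph{without} a Tor-rigidity hypothesis on $M$: Corollary~\ref{cor:CMEXT} and Theorem~\ref{evodthm00} all lean on rigidity, and Theorem~\ref{TAvrBuc} needs a complete intersection, so for a general fiber product the only leverage is the Nasseh-Takahashi Ext/Tor gap results of Remarks~\ref{rem:classNT}, which is precisely why the required vanishing range is as long as $t+\depth R+n$ with $t\ge\max\{5,n+1\}$ rather than something like $d+1$. Getting the bookkeeping of the syzygy shifts, the change-of-rings isomorphisms, and the Auslander-sequence conversion to line up so that the window \eqref{eq:short-gap} of Nasseh-Takahashi genuinely lands inside the vanishing range is the delicate calculation; everything else is routine.
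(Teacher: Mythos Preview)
Your second approach --- converting the $\Ext$-vanishing into $\Tor$-vanishing of an Auslander transpose against $M$ and then feeding this into Nasseh--Takahashi (Remarks~\ref{rem:classNT}(1)) --- is exactly the right structural idea, and the numerical consequences at the end are correctly derived. The first half of your proposal (passing to $X=\Omega^n_R M$ and reducing modulo $\underline x$ to land in a fiber product ring) is a dead end here, as you yourself diagnose: without Tor-rigidity or a complete-intersection hypothesis on $\overline R$ there is nothing to apply over $\overline R$, and the paper's proof does not take this route at all.

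The genuine gap is in your endgame. After the dichotomy ``$\pd_R M<\infty$ or $\pd_R(\D_{\alpha+1}M)<\infty$'' (with $\alpha=t+\depth R+n$), you assert that ``chasing this as in the proof of Theorem~\ref{thm:Ext0} yields $\pd_R M<\infty$''. But Theorem~\ref{thm:Ext0} does no such thing, and in general $\pd_R(\D_{\alpha+1}M)<\infty$ does \emph{not} force $\pd_R M<\infty$ (the ring is not assumed Gorenstein, so there is no transpose duality to exploit). The paper sidesteps this entirely: rather than trying to resolve the dichotomy in favor of $M$, it observes that \emph{either} branch gives, via Auslander--Buchsbaum, $\Tor_i^R(\D_{\alpha+1}M,M)=0$ for all $i>\depth R$. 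In particular $\Tor_{\alpha+1}^R(\D_{\alpha+1}M,M)=0$, and now the second half of Jorgensen's result (Proposition~\ref{Jorprop}(2)) --- which you never invoke --- says the natural map
\[
\Hom_R(M,R)\otimes_R M \longrightarrow \Hom_R(M,M)
\]
is surjective. Freeness of $M$ then follows from the Auslander--Goldman criterion \cite[A.1]{AuGol} (item (a) in the discussion after Proposition~\ref{ACT}). So the missing ingredients are precisely Proposition~\ref{Jorprop}(2) and the Auslander--Goldman criterion; with these the argument is short and never needs to establish $\pd_R M<\infty$ at all.
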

\begin{proof}
Set $\alpha:= t+\depth R+n$. By Proposition~\ref{Jorprop}(1), 
\begin{equation}
\Tor_i^R(\D_{\alpha+1}M,M)=0, \  {\rm for \ every }~i~ {\rm with}~\ 1 \leq i \leq \alpha. \tag{\ref{thm:arcminimal}.1}
\end{equation}
Since $R$ is a quasi-fiber product ring, 
Equation~(\ref{thm:arcminimal}.1) and Remark~\ref{rem:classNT}(1) 
imply $\pd_R M <\infty$ 
or $\pd_R (\D_{\alpha+1}M) < \infty$. By the Auslander-Buchsbaum Formula, $\pd_R M \leq \depth R$ or $\pd_R (\D_{\alpha+1}M) \leq \depth R$.  Thus 
\begin{equation}\label{VanTor}\Tor_i^R(\D_{\alpha +1}M,M)=0, \ {\rm for \  every }~i \ {\rm with }~ i>\depth R.\tag{\ref{thm:arcminimal}.2}
\end{equation}

Now Proposition~\ref{Jorprop}(2) 
provides the exact sequence
$$ \Hom_R(M,R)\otimes_R M \to \Hom_R(M, M)\to \Tor_{\alpha+1}^R(\D_{\alpha+1}M,M)\to 0,$$ where the  homomorphism 
$ \Hom_R(M,R)\otimes_R M \to \Hom_R(M, M)$ is the natural one.

Since $\alpha+1> \depth R$,  $\Tor_{\alpha+1}^R(\D_{\alpha+1}M,M)=0$, by (\ref{VanTor}). Thus  the previous exact sequence yields a surjective map
$$\Hom_R(M,R)\otimes_R M \to \Hom_R(M, M).$$

 The freeness of $M$ follows by Proposition~\ref{ACT}(a),  (\cite[A.1]{AuGol}).
 
 The ``Thus" statement follows  by setting  $ t=n+1$ for the first line and setting $t=5$ in the second line,  and using that $\depth R\le n+1$ by Remark~\ref{rem:size}.
\end{proof}

A natural consequence of Theorem~\ref{thm:arcminimal} of particular interest is the case that $R$ is a fiber product ring; that is, $n=0$. 
{Corollary}~\ref{cor:arcminimal} improves Nasseh and Sather-Wagstaff's result \cite[Theorem 4.5 (b)]{NS} that (ARC) holds for fiber product rings.

\begin{cor}\label{cor:arcminimal}
Let $R$ be a  fiber product ring 
 and let $M$ be an $R$-module. If $\Ext^i_R(M,M\oplus R)=0$, for $1\leq i \leq 6$,  then $M$ is free.
\end{cor}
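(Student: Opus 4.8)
The plan is to invoke Theorem~\ref{thm:arcminimal} in the special case $n=0$ and check that the numerical bound works out to $6$. When $R$ is a (non-trivial) fiber product ring, it is a quasi-fiber product ring with respect to the empty regular sequence, so $n=0$. By Fact~\ref{pdfin1}(1), $\depth R \le 1$. The theorem requires $t \ge \max\{5, n+1\} = 5$, and then guarantees freeness of $M$ provided $\Ext^i_R(M,M\oplus R)=0$ for $1 \le i \le t + \depth R + n$.

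First I would take $t = 5$, the smallest allowed value. Then the range of vanishing required is $1 \le i \le t + \depth R + n = 5 + \depth R + 0 = 5 + \depth R$. Since $\depth R \le 1$, we have $5 + \depth R \le 6$, so the hypothesis $\Ext^i_R(M,M\oplus R)=0$ for $1 \le i \le 6$ certainly implies $\Ext^i_R(M,M\oplus R)=0$ for $1 \le i \le 5+\depth R$. (Vanishing over a longer range implies vanishing over a shorter one.) Hence Theorem~\ref{thm:arcminimal} applies with $n=0$, $t=5$, and concludes that $M$ is free. This is really just the third line of the ``Thus'' display in Theorem~\ref{thm:arcminimal} specialized to $n=0$: for $n \le 4$ one gets the bound $14$, but when $n=0$ one can do better by tracking $\depth R$ directly rather than using the crude estimate $\depth R \le n+1 = 1$ inside the $5 + 2n + 1$ formula.

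There is essentially no obstacle here; the only thing to be careful about is making sure the specialization of the numerical bound is tight, i.e. that $5 + \depth R \le 6$ rather than something larger. This is immediate from Fact~\ref{pdfin1}(1). So the proof is a one-line citation.

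\begin{proof}
Since $R$ is a (non-trivial) fiber product ring, it is a quasi-fiber product ring with respect to the empty $R$-sequence, so in the notation of Theorem~\ref{thm:arcminimal} we have $n=0$. By Fact~\ref{pdfin1}(1), $\depth R \le 1$, so $t + \depth R + n = 5 + \depth R \le 6$ when $t = 5 = \max\{5, n+1\}$. Thus the hypothesis that $\Ext^i_R(M,M\oplus R)=0$ for $1 \le i \le 6$ implies in particular that $\Ext^i_R(M,M\oplus R)=0$ for every $i$ with $1 \le i \le t + \depth R + n$. Theorem~\ref{thm:arcminimal}, applied with $t=5$, now gives that $M$ is free.
\end{proof}
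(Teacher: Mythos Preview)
Your proof is correct and is precisely the approach the paper intends: the corollary is stated as the case $n=0$ of Theorem~\ref{thm:arcminimal}, and you have spelled out the specialization (using Fact~\ref{pdfin1}(1) to get $\depth R\le 1$ so that $t+\depth R+n\le 6$ with $t=5$). This is exactly the second displayed implication in the ``Thus'' part of Theorem~\ref{thm:arcminimal} at $n=0$.
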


\section{The Huneke-Wiegand Conjecture}

Conjecture {\rm(HWC)} is related to another condition on 
local rings we call (HW$^{2}$), also  considered  by Huneke and R.~Wiegand in \cite{HW}.

\begin{definition} Let  $(R,\m)$ be a local ring; define 
(HW$^{2}$) 
on $R$  as follows:
\begin{enumerate}

\item[] (HW$^{2}$)\ \ For every pair $M$ and $N$ of finitely generated $R$-modules such that  $M$ or $N$ has  rank, 
 if $M\otimes_R N$ is a maximal Cohen-Macaulay module, 
 then  $M$  or $N$ is free.\end{enumerate}
\end{definition}

Theorem~\ref{hw-main},  one of the main results of \cite{HW}, yields that  every hypersurface satisfies {\rm (HW$^{2}$)}. 
\begin{thm}\label{hw-main} ~\cite[Theorem 3.1]{HW} Let $(R,\m,k)$ be an abstract hypersurface, and let   $M$ and $N$ be finitely generated $R$-modules such that  $M$ or $N$ has  rank. 
 If $M\otimes_R N$ is a maximal Cohen-Macaulay module, 
 then  both $M$ and $N$ are maximal Cohen-Macaulay modules and one of $M$  or $N$ is free.\end{thm}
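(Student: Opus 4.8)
The plan is to reproduce the strategy of Huneke and R.~Wiegand \cite{HW}. First I would complete, reducing to the case $R = S/(f)$ with $(S,\m_S)$ a complete regular local ring and $f\in\m_S$: $\m$-adic completion is faithfully flat, commutes with finite tensor products, and preserves the properties ``hypersurface'', ``maximal Cohen--Macaulay'', ``has rank'' and ``free''. If $\dim R=0$ the claim is immediate, since the total quotient ring of an Artinian local ring is the ring itself, so (relabelling so that $M$ has rank) $M$ is free and $M,N$ are trivially maximal Cohen--Macaulay. So assume $d:=\dim R\ge 1$ and that $M$ has rank. Since $R$ is Cohen--Macaulay of positive dimension and $M\otimes_R N$ is maximal Cohen--Macaulay, $M\otimes_R N$ is torsion-free: an associated prime of a maximal Cohen--Macaulay module is a minimal prime of $R$, and over a Cohen--Macaulay ring every minimal prime is associated (this is the argument given for Facts~\ref{fact:torsion}(ii)). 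In particular $M\otimes_R N\ne 0$, so $M$ cannot be torsion, and $M$ has rank $r\ge 1$; replacing $M$ by $M/\operatorname{tors}(M)$ is routine and affects neither the hypotheses nor the conclusion (using the torsion-freeness of $M\otimes_R N$), so I may assume $M$ is torsion-free of rank $r\ge 1$.

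The crux is to prove that $M$ and $N$ are $\Tor$-independent, i.e. $\Tor_i^R(M,N)=0$ for all $i\ge 1$. The inputs are: each $\Tor_i^R(M,N)$ is a torsion module, since $M$ has rank (it vanishes after localizing at a minimal prime); $M\otimes_R N$ is torsion-free, indeed maximal Cohen--Macaulay; Auslander's depth formula (Theorem~\ref{austordep}), which controls depths once $\Tor$ vanishes in high degrees; and, decisively, the rigidity of $\Tor$ over a hypersurface. The rank hypothesis is essential for this last ingredient: rigidity genuinely fails over a non-domain hypersurface, for instance $R=k[[x,y]]/(xy)$ with $M=R/(x)$, which has no rank. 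Combining these one first shows $\Tor_1^R(M,N)=0$, and then propagates the vanishing to all $i\ge 1$ by rigidity. I expect this step to be the main obstacle; everything after it is formal bookkeeping.

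Granting $\Tor_i^R(M,N)=0$ for all $i\ge 1$, the rigidity theorem for hypersurfaces forces $\pd_R M<\infty$ or $\pd_R N<\infty$; by the symmetry of $\Tor$ in its two arguments I may assume $\pd_R M<\infty$. Since $M,N\ne 0$, Nakayama gives $M\otimes_R N=\Tor_0^R(M,N)\ne 0$, while $\Tor_i^R(M,N)=0$ for $i\ge 1$, so $q:=\sup\{i : \Tor_i^R(M,N)\ne 0\}=0$. Applying Theorem~\ref{austordep} with $q=0$ yields
$$
\depth_R N=\depth_R(M\otimes_R N)+\pd_R M\,.
$$
As $\depth_R N\le\dim R=d$ and $\depth_R(M\otimes_R N)=d$, this forces $\pd_R M=0$; that is, $M$ is free. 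Then $M\otimes_R N\cong N^{\,r}$ with $r\ge 1$ is maximal Cohen--Macaulay, so $N$ is maximal Cohen--Macaulay, and $M$ is free (hence maximal Cohen--Macaulay), which is the assertion. If instead $\pd_R N<\infty$, interchange the roles of $M$ and $N$.
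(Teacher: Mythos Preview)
The paper does not give its own proof of this statement; Theorem~\ref{hw-main} is quoted verbatim from \cite[Theorem~3.1]{HW} and used as a black box. So there is no in-paper argument to compare against. Your outline does follow the Huneke--Wiegand strategy, and the endgame is correct: once $\Tor_i^R(M,N)=0$ for all $i\ge1$ and one of $M,N$ has finite projective dimension, Theorem~\ref{austordep} with $q=0$ forces that module to be free, and then MCM-ness of the tensor product yields MCM-ness of the other factor.

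The genuine gap is exactly where you flag it: you do not actually show $\Tor_1^R(M,N)=0$, you only list ingredients and say you ``expect this step to be the main obstacle.'' Two points sharpen the difficulty. First, hypersurfaces are \emph{not} Tor-rigid in the one-step sense you invoke (your own example $k[[x,y]]/(xy)$ shows this), so ``propagate by rigidity'' from a single vanishing is not available; what Huneke--Wiegand actually use is the period-two structure of $\Tor^R$ coming from the change-of-rings sequence for $R=S/(f)$ with $S$ regular, together with Auslander's results over $S$, to force the eventual (hence all high) $\Tor$'s to vanish and then descend. Second, your reduction to $M$ torsion-free is either unnecessary or incomplete: if the argument for $M/\operatorname{tors}(M)$ terminates with $M/\operatorname{tors}(M)$ free (rather than $N$ free), you have not recovered that the original $M$ is MCM or free. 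Since none of the inputs to the $\Tor$-independence step require $M$ to be torsion-free---only that $M$ has rank---it is cleaner to drop that reduction entirely.
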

They also give Example~\ref{cinotHW''}  below to show that even complete intersection rings may not satisfy  (HW$^{2}$).
\begin{example} \label{cinotHW''} \cite[Example 4.3]{HW}. 
Let $R:=k[[T^4, T^5, T^6]],~ I:=(T^4, T^5),$ and
$J:=(T^4, T^6).$
Then $R$ is a complete intersection and $I\otimes_RJ$ is torsion-free, and so Cohen-Macaulay by Facts~\ref{fact:torsion}(ii), yet neither $I$ nor $J$ is free. For the proof that $I\otimes_RJ$ is torsion-free, see \cite{HW}.
\end{example}

\begin{proposition}\label{HWfiber}
Let $R$ be a  Gorenstein fiber product ring and let $M$ be a torsion-free $R$-module with rank.  If  $M\otimes_RM^*$  is torsion-free, then $M$ is free. 
\end{proposition}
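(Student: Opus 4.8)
The plan is to reduce immediately to the one-dimensional hypersurface case and then invoke the Huneke--Wiegand theorem for hypersurfaces. Recall from Fact~\ref{factGor} that a Gorenstein fiber product ring $R = S \times_k T$ is a one-dimensional hypersurface, so in particular $\dim R = \depth R = 1$. Now observe that over a one-dimensional Cohen--Macaulay ring, a module is maximal Cohen--Macaulay if and only if it is torsion-free, by Fact~\ref{fact:torsion}(ii); moreover, by Fact~\ref{fact:torsion}(i) and (iv), the reflexive modules coincide with the MCM modules (equivalently, the torsion-free modules), since $R$ is Gorenstein of dimension one. Hence the hypothesis that $M \otimes_R M^*$ is torsion-free is the same as saying $M \otimes_R M^*$ is maximal Cohen--Macaulay.

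With that translation in hand, the result follows directly from Theorem~\ref{hw-main}: since $R$ is an abstract hypersurface, $M$ has rank, and $M \otimes_R M^*$ is MCM, the theorem yields that both $M$ and $M^*$ are MCM and one of $M$, $M^*$ is free. If $M$ is free we are done. If instead $M^*$ is free, then I would argue that $M$ is also free: because $M$ is torsion-free with rank over a one-dimensional Gorenstein (hence hypersurface) ring, $M$ is reflexive (Fact~\ref{fact:torsion}(i),(iv) again), so $M \cong M^{**} = (M^*)^*$ is the dual of a free module, hence free. Thus in all cases $M$ is free.

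The only point requiring a little care is the equivalence ``torsion-free $\iff$ MCM $\iff$ reflexive'' for modules over $R$; this is exactly where one uses that $R$ is Cohen--Macaulay of dimension one and Gorenstein, and the paper has already recorded all the needed facts in Facts~\ref{fact:torsion}. The main (and essentially only) substantive input is Theorem~\ref{hw-main}, the Huneke--Wiegand result for hypersurfaces; everything else is bookkeeping about one-dimensional Gorenstein local rings. I do not anticipate any real obstacle here, since the fiber product hypothesis collapses, via Fact~\ref{factGor}, to the already-understood hypersurface case; the value of the statement is mainly in making the connection to (HWC) explicit for this class of rings.
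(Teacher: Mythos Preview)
Your proposal is correct and follows essentially the same route as the paper: reduce to a one-dimensional hypersurface via Fact~\ref{factGor}, apply the Huneke--Wiegand theorem to get that $M$ or $M^*$ is free, and in the latter case use that $M$ is reflexive (torsion-free $\Rightarrow$ MCM $\Rightarrow$ reflexive over a one-dimensional Gorenstein ring) to conclude $M\cong M^{**}$ is free. The only cosmetic difference is that the paper cites \cite[Theorem~3.7]{HW} directly, whereas you invoke Theorem~\ref{hw-main} after translating ``torsion-free'' to ``MCM''; also, the reflexivity step really uses parts (ii) and (i) of Facts~\ref{fact:torsion} rather than (i) and (iv).
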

\begin{proof}
Since $R$ is a Gorenstein  fiber product ring, 
$R$ is  a $1$-dimensional hypersurface (Fact \ref{factGor}).  By  \cite[Theorem 3.7]{HW}, either $M$ or $M^*$ is free.  If $M^*$ is free, then so is $M^{**}$. 
Using  (ii), and then (i), of Facts \ref{fact:torsion}, we see that $M$ is reflexive, and hence is free in either case.
\end{proof}

\begin{cor} 
\label{HWCfpr} If $R$ is a  Gorenstein fiber product ring, then $R$ satisfies  {\rm(HW)}.  That is,  if   $M$ is a torsion-free  $R$-module with rank and $M\otimes_R M^*$ is MCM,
 then  $M$  is free.
\end{cor}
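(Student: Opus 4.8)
The plan is to reduce the statement of Corollary~\ref{HWCfpr} to Proposition~\ref{HWfiber} by showing that, over a one-dimensional Cohen-Macaulay (in fact Gorenstein) ring, the hypothesis ``$M\otimes_R M^*$ is MCM'' is exactly the hypothesis ``$M\otimes_R M^*$ is torsion-free'' appearing in the Proposition. So the first step is to record that a Gorenstein fiber product ring $R$ is one-dimensional: this is Fact~\ref{factGor}, which says such an $R$ is a $1$-dimensional hypersurface. In particular $\dim R = 1$ and $R$ is Cohen-Macaulay.

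Next I would invoke Facts~\ref{fact:torsion}(ii): over a one-dimensional Cohen-Macaulay ring, a finitely generated module is maximal Cohen-Macaulay if and only if it is torsion-free. Applying this to the module $M\otimes_R M^*$, the assumption that $M\otimes_R M^*$ is MCM is equivalent to the assumption that $M\otimes_R M^*$ is torsion-free. With $M$ a torsion-free $R$-module with rank, all the hypotheses of Proposition~\ref{HWfiber} are now in place, and that proposition yields that $M$ is free. That completes the verification that $R$ satisfies (HW), since (HW) (Definition~\ref{defarhw}) is precisely the statement that every torsion-free module with rank whose tensor square $M\otimes_R M^*$ is MCM must be free.

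I do not anticipate any real obstacle: the entire content is the translation ``MCM $\Longleftrightarrow$ torsion-free in dimension one'' together with the dimension count from Fact~\ref{factGor}, and Proposition~\ref{HWfiber} does all the substantive work (via the Huneke--Wiegand hypersurface theorem \cite[Theorem 3.7]{HW} and the reflexivity bookkeeping of Facts~\ref{fact:torsion}(i)--(ii)). The one point worth a sentence of care is that ``$M$ has rank'' and ``$M$ torsion-free'' are carried over verbatim into the hypotheses of Proposition~\ref{HWfiber}, so no additional argument is needed there; the only genuinely new ingredient over the Proposition is the MCM $\leftrightarrow$ torsion-free equivalence, which is purely formal in this setting.

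Concretely, the proof reads: Since $R$ is a Gorenstein fiber product ring, $R$ is a $1$-dimensional hypersurface by Fact~\ref{factGor}; in particular $R$ is a one-dimensional Cohen-Macaulay ring. Let $M$ be a torsion-free $R$-module with rank such that $M\otimes_R M^*$ is MCM. By Facts~\ref{fact:torsion}(ii), the module $M\otimes_R M^*$ is then torsion-free. Proposition~\ref{HWfiber} now applies and shows that $M$ is free. Hence $R$ satisfies (HW). \qed
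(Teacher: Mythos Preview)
Your proposal is correct and matches the paper's approach exactly: the paper states Corollary~\ref{HWCfpr} as an immediate consequence of Proposition~\ref{HWfiber} without writing out a proof, and the bridge is precisely the one you supply---Fact~\ref{factGor} gives $\dim R=1$, and Facts~\ref{fact:torsion}(ii) translates ``MCM'' into ``torsion-free'' so that Proposition~\ref{HWfiber} applies.
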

\begin{cor} 
\label{HWCqfpr1} If $R$ is a  one-dimensional Gorenstein quasi-fiber product ring, then $R$ satisfies  {\rm(HW)}.\end{cor}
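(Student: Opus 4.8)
The plan is to reduce the one-dimensional Gorenstein quasi-fiber product case to the fiber product case already handled in Corollary~\ref{HWCfpr}. The key observation is Fact~\ref{factGorqfd1}: a Gorenstein quasi-fiber product ring of dimension $1$ is automatically a Gorenstein \emph{fiber product} ring, since if $\underline{x}=x_1,\dots,x_n$ is a regular sequence with $R/(\underline{x})$ a non-trivial fiber product ring, then $R/(\underline{x})$ is Gorenstein, hence a $1$-dimensional hypersurface by Fact~\ref{factGor}, which forces $n=0$.

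So the proof is essentially one line: by Fact~\ref{factGorqfd1}, $R$ is a Gorenstein fiber product ring, and then Corollary~\ref{HWCfpr} says $R$ satisfies {\rm (HW)}. I would write it as follows.

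\begin{proof}
By Fact~\ref{factGorqfd1}, a one-dimensional Gorenstein quasi-fiber product ring is in fact a Gorenstein fiber product ring. Now apply Corollary~\ref{HWCfpr}.
\end{proof}

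There is no real obstacle here; the only subtlety, already absorbed into Fact~\ref{factGorqfd1} and Fact~\ref{factGor}, is that the one-dimensionality rigidifies the situation so that no nontrivial regular sequence can be factored out. It is worth noting, as Remark~\ref{ACTr} emphasizes, that this dimension-one result does \emph{not} automatically bootstrap to higher-dimensional Gorenstein quasi-fiber product rings via Proposition~\ref{ACT}, because localizations of higher-dimensional quasi-fiber product rings at height-one primes need not themselves be quasi-fiber product rings; so Corollary~\ref{HWCqfpr1} genuinely only settles the one-dimensional case.
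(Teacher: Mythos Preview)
Your proof is correct and matches the paper's approach exactly: the paper simply says to apply Fact~\ref{factGorqfd1}, which (together with Corollary~\ref{HWCfpr}) is precisely what you do.
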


To see Corollary  
\ref{HWCqfpr1}, apply Fact~\ref{factGorqfd1}.

Next we give an example showing why for higher dimensional rings simply assuming  in  (HWC) (see Definition~\ref{defhwc'}) that $M\otimes_RM^*$ is torsion-free (rather than reflexive)   over a Gorenstein domain $R$ is not enough to conclude that $M$ is free.
\begin{example}\label{guppybarf}
Let $(R,\m)= k[[x,y]]$, a Gorenstein local 
domain with maximal ideal $\m=Rx+Ry$.  Then $\m^{-1} = \{\alpha\in K \mid \alpha\m\subseteq R\},$
where $K$ is the quotient field of $R$.  

We claim that $\m^{-1} = R$. Of course $\m^{-1}$ is naturally isomorphic to $\m^*$.  Since $\m^{-1} \supseteq R$, we prove the reverse inclusion.  
Let $\alpha\in \m^{-1}$, and write $\alpha = \frac{a}{b}$ in lowest terms (recall that $R$ is a UFD).  Suppose, by way
of contradiction, that $b$ is not a unit of $R$, and let $p$ be a prime divisor of $b$.  Since $\frac{a}{b}x\in R$
and $\frac{a}{b}y\in R$, we have $ax\in Rb$ and $ay\in Rb$.  Therefore $p\mid ax$ and $p\mid ay$.  The assumption about lowest terms means 
that $p\nmid a$, and hence $p\mid x$ and $p\mid y$.  That's impossible, and the claim is proved.  Now $\m\otimes_R\m^* \cong \m\otimes_RR \cong \m$, 
which is torsion-free.  But, of course, $\m$ is not free.
\end{example}

 \begin{conj}\label{HWregular}
Let $(R,\mathfrak{m})$ be a Gorenstein local ring and let $x\in \mathfrak{m}$ be a non-zerodivisor. If $R/(x)$ satisfies  {\rm(HW)}, then $R$ satisfies {\rm(HW}).
\end{conj}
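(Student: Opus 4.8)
\smallskip
\noindent\textbf{Toward Conjecture~\ref{HWregular}.}
The natural plan is to reduce modulo $x$. Write $\bar R=R/(x)$; since $R$ is Gorenstein and $x$ is a non-zerodivisor, $\bar R$ is a Gorenstein local ring, and by hypothesis it satisfies {\rm(HW)}. Let $M$ be a torsion-free $R$-module with rank such that $M\otimes_R M^*$ is maximal Cohen-Macaulay; we want $M$ to be free. Because $M$ is torsion-free, $x$ is $M$-regular, so $\bar M:=M/xM$ is a $\bar R$-module with $\depth_{\bar R}\bar M=\depth_R M-1$. The idea is to verify that $\bar M$ is a legitimate test module for {\rm(HW)} over $\bar R$ -- that is, torsion-free over $\bar R$, of rank, and with $\bar M\otimes_{\bar R}\bar M^{\,*}$ maximal Cohen-Macaulay over $\bar R$ -- then apply {\rm(HW)} for $\bar R$ to conclude that $\bar M$ is free over $\bar R$, and finally use the standard fact that, $x$ being regular on both $R$ and $M$, freeness of $M/xM$ over $R/(x)$ forces $M$ to be free over $R$.

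The first step is to reduce to the case that $M$ is maximal Cohen-Macaulay over $R$, because then the rest is clean. If $M$ is MCM, then $M$ is reflexive and $M^*$ is MCM by Facts~\ref{fact:torsion}(i), and $\Ext^i_R(M,R)=0$ for all $i\ge1$; applying $\Hom_R(M,-)$ to $0\to R\xrightarrow{\,x\,}R\to\bar R\to0$ then yields $M^*/xM^*\cong\Hom_{\bar R}(\bar M,\bar R)=\bar M^{\,*}$. Moreover $\bar M$ is MCM over the Cohen-Macaulay (hence equidimensional) ring $\bar R$, and an MCM module over an equidimensional Cohen-Macaulay local ring is torsion-free; finally
$$
\bar M\otimes_{\bar R}\bar M^{\,*}\;\cong\;(M\otimes_R M^*)\otimes_R\bar R\;=\;(M\otimes_R M^*)/x(M\otimes_R M^*)
$$
is MCM over $\bar R$ because $x$ is regular on the MCM module $M\otimes_R M^*$. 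So with $M$ MCM the plan goes through, and $\bar M$ free gives $M$ free.

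Two gaps remain, and I expect them to be the real obstacles. The first is the reduction to the MCM case itself: it is not obvious that $M$ torsion-free with $M\otimes_R M^*$ MCM forces $\depth_R M=\dim R$. This seems to belong to the same circle of ideas as (G-HWC$_{tf}$) and the validity of the depth formula for $M$ and $M^*$; without it, $\bar M$ need not be torsion-free over $\bar R$, and the comparison map $M^*/xM^*\to\bar M^{\,*}$ need not be an isomorphism (its cokernel is the $x$-torsion submodule of $\Ext^1_R(M,R)$). The second, and more serious, is that even granting $M$ MCM one must know that $\bar M$ has a \emph{rank} over $\bar R$, which amounts to $M_P$ being free over $R_P$ for each height-one prime $P$ containing $x$; but over the one-dimensional Gorenstein ring $R_P$ the hypotheses ``$M_P$ torsion-free with rank, $M_P\otimes_{R_P}M_P^{\,*}$ MCM'' are precisely the hypotheses of {\rm(HW)} for $R_P$. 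Thus the plan silently presupposes {\rm(HW)} for these one-dimensional localizations of $R$ -- information that {\rm(HW)} for $\bar R$ does not supply, since the corresponding localizations of $\bar R$ are Artinian, where {\rm(HW)} is vacuous. In short, the conjecture appears intertwined with the open problem of whether {\rm(HW)} localizes (``does (i)$\implies$(ii)?'' in Proposition~\ref{ACT}), which is presumably why it is stated as a conjecture; a proof would have to circumvent this, perhaps by working directly with $\Hom_R(M,M)$ and the natural map $M\otimes_R M^*\to\Hom_R(M,M)$ rather than passing to the punctured spectrum.
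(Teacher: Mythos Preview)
This statement is a \emph{conjecture} in the paper; no proof is given there, and you correctly recognise that your outline is not a proof either.  Your analysis of the natural approach and its obstructions is essentially the same as the paper's.  The paper singles out precisely the two stumbling blocks you found: Example~\ref{ex:platypus} shows that ``torsion-free with rank'' need not pass from $M$ to $M/xM$, and Remark~\ref{st2} together with Lemma~\ref{lem:bleat} isolates the failure of $\overline{M^*}\to(\overline M)^*$ to be an isomorphism when $\Ext^1_R(M,R)\ne 0$.  The paper's partial positive results (Proposition~\ref{Gorqfp} and Corollary~\ref{Gorqfp'}) simply \emph{assume away} these obstructions---they take as hypotheses that $\overline M$ is torsion-free with rank over $\overline R$ and that $\Ext^1_R(M,R)=0$---and then the argument you sketch (apply {\rm(HW)} to $\overline R$, then lift freeness via Lemma~\ref{freemodregseq}) goes through.

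Your second paragraph, reducing the dual identification to the MCM case, is exactly the mechanism behind Corollary~\ref{Gorqfp'}: once $M$ is MCM over the Gorenstein ring $R$, all $\Ext^i_R(M,R)$ vanish and Lemma~\ref{lem:bleat} gives $\overline{M^*}\cong(\overline M)^*$.  Your observation that the remaining ``rank'' obstruction is equivalent to freeness of $M_P$ at the height-one primes $P\ni x$---hence to {\rm(HW)} for those one-dimensional localizations $R_P$, which {\rm(HW)} for $\overline R$ does not supply---is a sharper formulation than anything the paper makes explicit, and it ties the conjecture cleanly to the open ``does (i)$\implies$(ii)?'' question following Proposition~\ref{ACT}.
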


Conjecture \ref{HWregular} is related to Conjecture~\ref{HWGorqf}:

\begin{conj}  \label{HWGorqf} 
Gorenstein quasi-fiber product rings
  satisfy {\rm(HW)}. \end{conj}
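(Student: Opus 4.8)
The plan is to deduce Conjecture~\ref{HWGorqf} from Conjecture~\ref{HWregular}, i.e.\ from the assertion that (HW) lifts modulo a non-zerodivisor over a Gorenstein ring. Indeed, suppose $R$ is a Gorenstein quasi-fiber product ring with respect to the regular sequence $\underline x = x_1,\dots,x_n$. Then $\bar R := R/(\underline x)$ is a Gorenstein fiber product ring, hence a one-dimensional hypersurface by Fact~\ref{factGor}, and it satisfies (HW) by Corollary~\ref{HWCfpr}. For $0\le j<n$ the ring $R/(x_1,\dots,x_j)$ is Gorenstein and $x_{j+1}$ is a non-zerodivisor on it, so, granting Conjecture~\ref{HWregular}, a descending induction on $j$ shows that $R/(x_1,\dots,x_j)$ satisfies (HW) for every $j$; the case $j=0$ is the conclusion. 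This mirrors the way (AR) lifts modulo a regular sequence (Celikbas \cite[Theorem 4.5(1)]{CeT}), which already yields (ARC) for quasi-fiber product rings.

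To attack Conjecture~\ref{HWregular}, let $R$ be Gorenstein, $x\in\fm$ an $R$-regular element with $\bar R = R/(x)$ satisfying (HW), and let $M$ be a torsion-free $R$-module with rank such that $M\otimes_R M^*$ is MCM; note $\dim R\ge 1$ automatically. Since $M$ is torsion-free, $x$ is $M$-regular; since $M^* = \Hom_R(M,R)$ embeds in a free module it too is torsion-free, so $x$ is $M^*$-regular. As $M$ has positive rank, so do $M^*$ and $M\otimes_R M^*$; being MCM of positive rank over the Cohen--Macaulay ring $R$, the module $M\otimes_R M^*$ is unmixed, hence torsion-free, so $x$ is a non-zerodivisor on it as well. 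Therefore depth drops by exactly one upon reduction and
\[
\overline{M\otimes_R M^*}\;\cong\;(M/xM)\otimes_{\bar R}(M^*/xM^*)
\]
is an MCM $\bar R$-module. The remaining steps would be: (a) identify $M^*/xM^*$ with $\Hom_{\bar R}(M/xM,\bar R)$; (b) check that $M/xM$ is a torsion-free $\bar R$-module with rank; then (HW) over $\bar R$ forces $M/xM$ to be free, and a routine argument (Nakayama's Lemma together with $\Tor_1^R(M,\bar R)=(0:_M x)=0$) forces $M$ itself to be free.

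The obstacle is (a) and (b). Step (a) amounts to the vanishing of the connecting map in the long exact sequence arising from $0\to R\xrightarrow{x}R\to\bar R\to 0$, i.e.\ to $\Ext^1_R(M,R)=0$; step (b) follows once $M$ satisfies Serre's condition $(S_2)$ over $R$, since then $M/xM$ satisfies $(S_1)$ over the Cohen--Macaulay ring $\bar R$ and so is torsion-free. Both conditions hold automatically if $M$ is maximal Cohen--Macaulay over $R$, by Remark~\ref{JorExtsyz} and Facts~\ref{fact:torsion}. Thus the crux is a Huneke--Wiegand rigidity statement for quasi-fiber product rings: must $M\otimes_R M^*$ being MCM force $M$ to be maximal Cohen--Macaulay (or at least reflexive), as it does over a hypersurface by Theorem~\ref{hw-main}? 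Establishing this --- presumably by exploiting the decomposition $\fm/(\underline x)=I\oplus J$ of $\bar R$ and the $\Ext$/$\Tor$ structure over (quasi-)fiber products from \cite{NT} and Section~\ref{SecVE}, while tracking a module $M$ that is not a priori MCM --- is the main difficulty, and it is precisely this gap that leaves both Conjecture~\ref{HWregular} and Conjecture~\ref{HWGorqf} open.
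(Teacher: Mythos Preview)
The statement is a \emph{conjecture}, and the paper does not prove it; immediately after stating Conjecture~\ref{HWGorqf} the paper offers only partial results (Proposition~\ref{Gorqfp}, Corollaries~\ref{HWGorqf2dTry} and \ref{Gorqfp'}) under extra hypotheses. Your proposal is likewise not a proof but an outline of the natural strategy---reduce to Conjecture~\ref{HWregular}, then attack that by passing to $\bar R=R/(x)$---together with an honest identification of the obstacles, and you yourself conclude that the gap ``leaves both Conjecture~\ref{HWregular} and Conjecture~\ref{HWGorqf} open.'' This matches the paper's own discussion: the paper states Conjecture~\ref{HWregular} as the relevant reduction, points out in Example~\ref{ex:platypus} that ``torsion-free with rank'' need not pass to $M/xM$, notes in Remark~\ref{st2} and Lemma~\ref{lem:bleat} that $\overline{M^*}\cong(\overline M)^*$ requires $\Ext^1_R(M,R)=0$, and then proves the conclusion only after \emph{assuming} these obstructions away (the hypotheses on $M/(\underline x)M$ in Proposition~\ref{Gorqfp}, or condition (4) in Corollary~\ref{Gorqfp'}).

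One caveat in your analysis: you assert that steps (a) and (b) ``hold automatically if $M$ is maximal Cohen--Macaulay over $R$.'' For (a) this is correct (Remark~\ref{JorExtsyz}), and the torsion-freeness half of (b) follows from your $(S_1)$ argument. But the \emph{rank} half of (b) is not addressed: the associated primes of $\bar R$ are the minimal primes over $(x)$, which bear no direct relation to $\Ass(R)$, and nothing you wrote forces $M/xM$ to have constant rank there. Example~\ref{ex:platypus} already warns that rank need not descend (though admittedly that example is not MCM). This is precisely why the paper imposes the rank condition on $M/(\underline x)M$ as a separate hypothesis in Proposition~\ref{Gorqfp} rather than deriving it. So even your ``MCM case'' is not quite closed, reinforcing that the conjecture remains genuinely open.
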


In particular, we are interested in the case when $R$ has dimension two: 

\begin{question}  \label{HWGorqf2d} 
If $M$ is a finitely generated torsion-free module with rank over a two-dimensional Gorenstein quasi-fiber product ring $R$ and  $M\otimes_RM^*$  is maximal Cohen-Macaulay, must $M$ be  free? 
\end{question}

We have some results related to the conjectures  with additional hypotheses. First we make a remark and prove a lemma.

\begin{remark} \label{0dGor} If $R$ is a $0$-dimensional local ring, then $R$ satisfies {\rm(HW)}.
This  follows trivially since every  $R$-module  with rank  is free.
\end{remark}

One stumbling block to proving Conjectures~\ref{HWregular} and  \ref{HWGorqf} and answering Question~\ref{HWGorqf2d}
is that ``torsion-free with rank" for $M$ as an $R$-module is not necessarily
 inherited by $M/IM$ as an $R/I$-module for an ideal $I$ of $R$, even if $I$ is a principal ideal generated by a non-zerodivisor on both $R$ and $M$:
 
 \begin{example}\label{ex:platypus}   
Let  $k$ be a field, let $R=k[[x,y]]$, and let $M$ be the maximal ideal $Rx+Ry$.  Then  
$M$ is a torsion-free $R$-module. However $M/xM$ is not torsion-free
 as a module over $\overline R=R/(x)$, since  $x\in M\setminus xM$ implies $\overline x\ne \overline 0$ in $\overline M$, but $yx\in xM$, and so  $\overline{y}\cdot \overline x=\overline 0$.  
  (Note that $\overline y$ is a non-zerodivisor of $\overline R = k[[y]]$.)  
  
  Also, for $N=R/(y)$
 and  $f=xy$, the module
 $N$ has rank (because $R$ is a domain), but $\overline N:=N/fN$ does not have rank as an $R/(f)$-module.  To see this:
The ring $S:=R/(f)$ has two associated primes, namely 
 $P := S\overline x$ and $Q:= S\overline y$. Now  $yN=0\implies \overline y\cdot\overline N=0\implies \overline N_P=0$,
 since $\overline y$ is a unit of $S_P$. To see that $\overline N_Q\ne 0$, consider the element $\overline 1\in\overline N$, that is, the coset $1+(y)+xy N$. If $\frac{\overline 1}{\overline 1}=0$ in $\overline N_Q$, then, 
 for some $t\in S\setminus S\overline y$, we would have that  $t\cdot\overline 1$  is 
 the coset $0+(y)+xy N$. Now $t\in S\setminus S\overline y\implies t$ is a coset of form $ g_1(x)+yg_2(y)+xyR,$
where $0\ne g_1(x)\in k[[x]]$ and $g_2(y)\in k[[y]]$. Then  $$t\cdot\overline 1= (g_1(x)+yg_2(y)+xyR)(1+(y)+xy N)= g_1(x)+(y)+xy N\ne 0,$$ a contradiction. Thus $\overline N_Q\ne 0$.\end{example}

On the other hand, we do have Lemma~\ref{freemodregseq}:
\begin{lemma} \label{freemodregseq} Let $N$ be a finitely generated module over a local ring $(R, \m)$.
Let $\underline x=\{x_1,\ldots,x_n\}$ be a regular sequence of $R$ such that  $\underline x
$ 
is a regular sequence on $N$ and $N/(\underline x)N$ is free.
Then $N$ is free.
\end{lemma}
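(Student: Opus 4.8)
The plan is to prove the statement by induction on $n$, the length of the regular sequence, with the real content concentrated in the base case $n = 1$.

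For $n = 1$, write $x = x_1$. First I would choose elements $e_1, \dots, e_m \in N$ whose images form a free basis of $N/xN$ over $R/(x)$; by Nakayama's Lemma these generate $N$, so there is a surjection $\pi \colon R^m \twoheadrightarrow N$, whose kernel $K$ is finitely generated because $R$ is Noetherian. Since $x$ is a non-zerodivisor on $N$, applying $-\otimes_R N$ to the exact sequence $0 \to R \xrightarrow{x} R \to R/(x) \to 0$ yields $\Tor_1^R(R/(x),N) = \ker(N \xrightarrow{x} N) = 0$, so tensoring $0 \to K \to R^m \to N \to 0$ with $R/(x)$ produces a short exact sequence $0 \to K/xK \to (R/(x))^m \to N/xN \to 0$. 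The right-hand map carries the standard basis to the chosen basis of $N/xN$, hence is an isomorphism, which forces $K/xK = 0$; Nakayama's Lemma then gives $K = 0$, so $\pi$ is an isomorphism and $N$ is free.

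For the inductive step, put $\overline{R} = R/(x_1)$ and $\overline{N} = N/x_1 N$. The images of $x_2, \dots, x_n$ form a regular sequence on both $\overline{R}$ and $\overline{N}$, and $\overline{N}/(x_2,\dots,x_n)\overline{N} = N/(\underline{x})N$ is free over $\overline{R}/(x_2,\dots,x_n)\overline{R} = R/(\underline{x})$; by the inductive hypothesis $\overline{N}$ is free over $\overline{R}$. Applying the case $n = 1$ to the non-zerodivisor $x_1$ (on $R$ and on $N$) with $N/x_1 N$ free over $R/(x_1)$ then shows that $N$ is free over $R$.

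I do not anticipate a genuine obstacle here: this is essentially the standard fact that freeness of a finitely generated module descends modulo a non-zerodivisor. The only point requiring care is the vanishing $\Tor_1^R(R/(x),N) = 0$, which is precisely the hypothesis that $x$ acts as a non-zerodivisor on $N$; once that is available, the proof is a routine double application of Nakayama's Lemma together with the Noetherian hypothesis to control the kernel.
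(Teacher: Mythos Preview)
Your proof is correct and follows essentially the same inductive scheme as the paper. The only difference is in the base case $n=1$: the paper invokes \cite[Lemma 4.9]{BH} to get $\pd_R N = \pd_{R/(x)}(N/xN) = 0$ in one line, whereas you unpack this by hand via the minimal presentation $0\to K\to R^m\to N\to 0$, the vanishing $\Tor_1^R(R/(x),N)=0$, and Nakayama. Your version is more self-contained; the paper's is shorter by outsourcing the key step.
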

\begin{proof} First consider the case $n=1$, that is, $\underline x=\{x\}$, where $x\in\m$ and $x$ is a non-zerodivisor 
on $R$ and $N$. By \cite[Lemma 4.9]{BH}, $\pd N=\pd (N/xN)$. Since $N/xN$ is free,
 $\pd N=\pd (N/xN)=0$, and so $N$ is free. 

For $n>1$, use induction and the equation
$$N/(x_1,\ldots,x_{n})N=\frac{N/(x_1,\ldots,x_{n-1})N}{x_n(N/(x_1,\ldots,x_{n-1})N)}.$$
\vskip-20pt
\end{proof}


By applying Lemma~\ref{freemodregseq}, we have Proposition~\ref{Gorqfp}.


\begin{proposition} \label{Gorqfp} Let $R$ be a Gorenstein quasi-fiber product ring, and let $(\underline x)
$ be a regular sequence such that $R/(\underline x)$ is a fiber product ring.
 Let $M$ be a finitely generated $R$-module such that  
\begin{enumerate} 
\item [$(1)$] $\underline x$ is a regular sequence on $M$,
\item [$(2)$]  $M/(\underline x)M$ is torsion-free and has rank as an $(R/(\underline x))$-module,
\item [$(3)$]  $(M/(\underline x)M) \otimes_{R/(\underline{x})} \Hom_{R/(\underline x)}(M/(\underline{x})M,R/({\underline{x}}))$  is torsion-free  
as an $(R/(\underline x))$-module. 
\end{enumerate}
\noindent Then $M$ is free. 
\end{proposition}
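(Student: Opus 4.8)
The plan is to reduce everything modulo the regular sequence $\underline x$ and then apply Proposition~\ref{HWfiber} to the ring $\overline R = R/(\underline x)$. First I would observe that $\overline R$ is a Gorenstein fiber product ring: it is a fiber product ring by hypothesis, and it is Gorenstein because Gorensteinness descends (and ascends) along quotients by regular sequences. Hence, by Fact~\ref{factGor}, $\overline R$ is a one-dimensional hypersurface, and in particular a one-dimensional Cohen–Macaulay ring. Set $\overline M = M/(\underline x)M$. By hypothesis (1), $\underline x$ is $M$-regular, so $\overline M$ is a well-behaved $\overline R$-module; by hypothesis (2) it is torsion-free with rank.

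The key computational step is to identify the dual and the tensor product after reduction. Since $\underline x$ is $R$-regular and $M$-regular, and $M$ is finitely generated, one has the standard base-change isomorphism
$$
\Hom_R(M,R) \otimes_R \overline R \;\cong\; \Hom_{\overline R}(\overline M, \overline R),
$$
provided $\underline x$ is also regular on $\Hom_R(M,R)$ — which holds because $\Hom_R(M,R)$ is a second syzygy (it embeds in a free module via $M^* \hookrightarrow F_0^*$ as in \eqref{eq:A-trans}) and $\underline x$ is $R$-regular hence regular on any submodule of a free module. Granting that, reducing the natural evaluation map and using right-exactness of tensor gives
$$
(M \otimes_R M^*) \otimes_R \overline R \;\cong\; \overline M \otimes_{\overline R} \Hom_{\overline R}(\overline M, \overline R).
$$
Hypothesis (3) says precisely that the right-hand side is torsion-free as an $\overline R$-module. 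Now Proposition~\ref{HWfiber} applies to the Gorenstein fiber product ring $\overline R$ and the torsion-free $\overline R$-module $\overline M$ with rank: it yields that $\overline M$ is free over $\overline R$. Finally, since $\underline x$ is $M$-regular and $M/(\underline x)M = \overline M$ is $\overline R$-free, Lemma~\ref{freemodregseq} gives that $M$ is free over $R$, completing the proof.

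The main obstacle I anticipate is justifying the base-change isomorphism for $\Hom$ — namely that $\Hom_R(M,R)\otimes_R\overline R \cong \Hom_{\overline R}(\overline M,\overline R)$ and that the natural evaluation map $M\otimes_R M^* \to \Hom_R(M,M)$ behaves compatibly under reduction. The cleanest route is: since $M^*$ is torsionless (Discussion~\ref{AsTr}(iii)) it is a submodule of a free module, so $\underline x$ is $M^*$-regular; then apply Remark~\ref{Mats-Formulas}(b)-type reasoning, or more directly the standard fact that for a short exact sequence $0\to M^*\to F_0^*\to \D M\to 0$ and a regular element $x$ on all three terms (which needs $x$ regular on $\D M$, handled by passing to a high enough syzygy or noting $\D M$ has positive depth is not automatic — so instead one uses $\Tor_1^R(\D M, \overline R)$ and the vanishing $\Tor_1^R(F_0^*,\overline R)=0$ to get the needed exactness $0\to M^*\otimes\overline R \to F_0^*\otimes\overline R$). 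In fact the slickest formulation avoids $\D M$ entirely: apply $-\otimes_R\overline R$ to $0\to M^*\to F_0^*\to C\to 0$ where $C=\operatorname{im}(F_0^*\to F_1^*)$, noting $C$ embeds in the free module $F_1^*$ so $\underline x$ is $C$-regular, whence $\Tor_1^R(C,\overline R)=0$ and the sequence stays exact after reduction, identifying $M^*\otimes\overline R$ with $\ker(\overline{F_0^*}\to\overline{F_1^*}) = \Hom_{\overline R}(\overline M,\overline R)$ since $\overline{F_\bullet}$ is a free resolution of $\overline M$ over $\overline R$. Everything else is formal.
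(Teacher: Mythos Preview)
Your core argument is correct and matches the paper's proof exactly: apply Proposition~\ref{HWfiber} to $\overline M$ over the Gorenstein fiber product ring $\overline R$ to conclude $\overline M$ is free, then invoke Lemma~\ref{freemodregseq} to lift freeness to $M$.

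However, the entire discussion of the base-change isomorphism $\Hom_R(M,R)\otimes_R\overline R \cong \Hom_{\overline R}(\overline M,\overline R)$ and the reduction of $M\otimes_R M^*$ is unnecessary and should be deleted: hypothesis~(3) is already stated directly in terms of $\overline M$ and its $\overline R$-dual, so Proposition~\ref{HWfiber} applies immediately with no computation required. The paper's proof is two sentences for exactly this reason.
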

\begin{proof} By Proposition~\ref{HWfiber}, $M/(\underline x)M$ is free as an $(R/(\underline x))$-module. Now Lemma~\ref{freemodregseq} implies that
$M$ is free.
\end{proof}

\bigskip
 Proposition~\ref{Gorqfp} in the case $\dim R=2$ yields    a partial affirmative answer to Question~\ref{HWGorqf2d}  in Corollary~\ref{HWGorqf2dTry}. We need to  
replace the conditions on $M$ in (\ref{HWGorqf2d}) by 
conditions on $M/xM$, where $x$ is a regular element of $R$ such that $R/(x)$ is a  fiber product ring and $x$ is regular on $M$.
\begin{corollary}  \label{HWGorqf2dTry} Let $(R, \m)$  be  a two-dimensional
 Gorenstein quasi-fiber product 
ring, 
 let $x$ be a non-zerodivisor of $R$ such that $x\in\m$ and $R/(x)$ is a fiber product ring,
  and let $M$ be a finitely generated $R$-module such that $x$ is regular on $M$. If 
  $M/xM$ is a torsion-free $(R/(x))$-module with rank, and   
  $(M/xM)\otimes_{R/(x)}\Hom_{R/(x)}(M/xM, R/(x))$  is maximal Cohen-Macaulay
  as an $(R/(x))$-module, then $M$ is free. 
\end{corollary}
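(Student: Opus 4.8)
The plan is to deduce this statement directly from Proposition~\ref{Gorqfp}, taking the regular sequence there to be the single element $x$ (so $n=1$). Write $\overline R := R/(x)$ and $\overline M := M/xM$. Since $R$ is Gorenstein and $x\in\m$ is a non-zerodivisor, $\overline R$ is a one-dimensional Gorenstein local ring; by hypothesis it is also a fiber product ring, so it is a Gorenstein fiber product ring in the sense needed by Proposition~\ref{HWfiber}, which underlies Proposition~\ref{Gorqfp}. In particular $(x)$ is a regular sequence on $R$ with $R/(x)$ a fiber product ring, so the setup of Proposition~\ref{Gorqfp} is in force.

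Next I would check the three hypotheses of Proposition~\ref{Gorqfp} for the one-element sequence $x$. Hypothesis (1), that $x$ is regular on $M$, is assumed. Hypothesis (2), that $\overline M$ is torsion-free with rank over $\overline R$, is assumed. For hypothesis (3), the key observation is that $\overline R$ is a one-dimensional Cohen-Macaulay ring, so by Facts~\ref{fact:torsion}(ii) a finitely generated $\overline R$-module is maximal Cohen-Macaulay if and only if it is torsion-free. Hence the assumption that $\overline M\otimes_{\overline R}\Hom_{\overline R}(\overline M,\overline R)$ is maximal Cohen-Macaulay over $\overline R$ is precisely the assertion that this module is torsion-free over $\overline R$, which is hypothesis (3).

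With all three hypotheses verified, Proposition~\ref{Gorqfp} yields that $M$ is free, completing the argument. I expect there to be no substantive obstacle once the problem is framed this way; the only point requiring care is the deliberate choice to impose the hypotheses on the quotient $\overline M = M/xM$ rather than on $M$ itself. As Example~\ref{ex:platypus} shows, ``torsion-free with rank'' and ``$M\otimes_RM^*$ maximal Cohen-Macaulay'' for $M$ over $R$ need not descend to the corresponding statements for $M/xM$ over $R/(x)$, so the genuine difficulty underlying Question~\ref{HWGorqf2d}—establishing such descent—is exactly what this corollary sidesteps by hypothesizing directly downstairs.
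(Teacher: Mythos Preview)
Your proof is correct and is essentially the same as the paper's. The paper even introduces this corollary as ``Proposition~\ref{Gorqfp} in the case $\dim R=2$,'' and its short proof simply inlines the two steps of Proposition~\ref{Gorqfp} (Proposition~\ref{HWfiber} plus Lemma~\ref{freemodregseq}) rather than citing Proposition~\ref{Gorqfp} itself; your use of Facts~\ref{fact:torsion}(ii) to identify ``MCM'' with ``torsion-free'' over the one-dimensional ring $\overline R$ is exactly what the paper does as well.
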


\begin{proof}  By (ii) of Fact`\ref{fact:torsion} and Proposition~\ref{HWfiber}, one has $M/xM$ is free as an  $(R/(x))$-module; now Lemma~\ref{freemodregseq}  says that $M$ is free over $R$.
\end{proof}

\begin{remark}\label{st2} Another stumbling block: 
It is easily seen that   $M \otimes_R M^*$  MCM
implies that tensoring the terms with $R/(\underline x)$ preserves MCM, so
\begin{equation*} (M/(\underline{x})M) \otimes_{R/(\underline{x})} (M^*/(\underline x)M^*)\tag{\ref{st2}.0}
\end{equation*} 
is MCM as an $(R/(\underline x))$-module (equivalently torsion-free 
 as an $(R/(\underline x))$-module, by (ii) of Facts~\ref{fact:torsion}).
But it does not necessarily follow that 
\begin{equation*}(M/(\underline x)M) \otimes_{R/(\underline{x})}
 \Hom_{R/(\underline x)}(M/(\underline{x})M,R/(\underline{x}))
 \tag{\ref{st2}.1}
\end{equation*} 
 is MCM as an $(R/(\underline x))$-module. If $ M^*/(\underline x)M^*= \Hom_{R/(\underline x)}(M/(\underline{x})M,R/(\underline{x}))$, then Expresssion~(\ref{st2}.1) would be MCM. 
 \end{remark}

Lemma~\ref{lem:bleat} gives a condition that implies 
$\overline {M^*}=({\overline M})^*$, for certain   $R$-modules $M$ and 
 $x\in R$,   where 
$I:=xR$, $\overline {M^*} :=M^*/IM^*$ , $\overline M:=M/IM$ and
 $$
 (\overline M)^*=(M/IM)^*:=\Hom_{R/I}(M/IM, R/I)\,.
 $$
\begin{lem}\label{lem:bleat}
  Let $(R,\m)$ be a local ring and $M$ and $N$ be $R$-modules.  Let $x\in \m$ be a NZD in $R$ and also a NZD on $N$.  
  For any $R$-module $V$, denote $V/xV$ by $\overline{V}$.  
  The natural map $\Hom_R(M,N) \to \Hom_{\overline{R}}(\overline{M},\overline{N})$
  induces an injective homomorphism $\overline{\Hom_R(M,N)} \hookrightarrow
   \Hom_{\overline{R}}(\overline{M},\overline{N})$.  If, in addition,
    $\Ext^1_R(M,N) = 0$, then the injective homomorphism is in fact an isomorphism.
  \end{lem}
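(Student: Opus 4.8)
The plan is to apply the functor $\operatorname{Hom}_R(M,-)$ to the short exact sequence $0 \to N \overset{x}{\to} N \to \overline{N} \to 0$, which is exact because $x$ is a nonzerodivisor on $N$. This yields the long exact sequence
$$
0 \to \operatorname{Hom}_R(M,N) \overset{x}{\to} \operatorname{Hom}_R(M,N) \to \operatorname{Hom}_R(M,\overline{N}) \to \operatorname{Ext}^1_R(M,N) \overset{x}{\to} \operatorname{Ext}^1_R(M,N) \to \cdots.
$$
From the left end of this sequence I extract a short exact sequence $0 \to \overline{\operatorname{Hom}_R(M,N)} \to \operatorname{Hom}_R(M,\overline{N}) \to {}_x\!\operatorname{Ext}^1_R(M,N) \to 0$, where ${}_x\!\operatorname{Ext}^1_R(M,N)$ denotes the $x$-torsion submodule (the kernel of multiplication by $x$) of $\operatorname{Ext}^1_R(M,N)$. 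This already gives the injective homomorphism $\overline{\operatorname{Hom}_R(M,N)} \hookrightarrow \operatorname{Hom}_R(M,\overline{N})$, and if $\operatorname{Ext}^1_R(M,N) = 0$ then the torsion cokernel vanishes and the map is an isomorphism.

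The one thing that needs checking is that the map produced by homological algebra in the previous paragraph coincides (up to the natural identifications) with the natural base-change map $\overline{\operatorname{Hom}_R(M,N)} \to \operatorname{Hom}_{\overline R}(\overline M, \overline N)$ asserted in the statement. Here I use that $\operatorname{Hom}_R(M,\overline N) \cong \operatorname{Hom}_{\overline R}(\overline M, \overline N)$ canonically, since any $R$-homomorphism $M \to \overline N$ kills $xM$ and hence factors uniquely through $\overline M = M/xM$; this identification is functorial. Under this identification, the connecting-sequence map $\operatorname{Hom}_R(M,N)/x\operatorname{Hom}_R(M,N) \to \operatorname{Hom}_R(M,\overline N)$ is induced by post-composition with the surjection $N \twoheadrightarrow \overline N$, which is exactly the reduction map $\varphi \mapsto \overline\varphi$, i.e. the natural base-change map. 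So the homologically-produced map and the natural map agree.

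I expect the main (and only genuine) obstacle to be this identification of the two maps — being careful that "the natural map $\operatorname{Hom}_R(M,N) \to \operatorname{Hom}_{\overline R}(\overline M,\overline N)$" and "the map in the long exact sequence" are literally the same, and that passing to the quotient by $x\operatorname{Hom}_R(M,N)$ is legitimate (it is, because multiplication by $x$ on $\operatorname{Hom}_R(M,N)$ is precisely post-composition with multiplication by $x$ on $N$, matching the first map in the long exact sequence). Everything else — exactness of the long sequence, extracting the short exact sequence at its left end, and the vanishing argument when $\operatorname{Ext}^1_R(M,N)=0$ — is routine. No hypothesis beyond "$x$ a nonzerodivisor on $N$" is needed for injectivity; $x$ being a nonzerodivisor on $R$ is only relevant so that $\overline R$ is the intended ring.
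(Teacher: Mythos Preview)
Your proof is correct. The paper takes a slightly different route for the first assertion: it proves injectivity by an explicit, elementary computation of the kernel of the natural map $\Hom_R(M,N)\to\Hom_{\overline R}(\overline M,\overline N)$, showing directly that any $f$ with $f(M)\subseteq xN$ can be written $f=xg$ for a well-defined $g$ (using that $x$ is a NZD on $N$ for uniqueness), and only invokes the long exact sequence $\Hom_R(M,-)$ applied to $0\to N\stackrel{x}{\to}N\to\overline N\to 0$ for the surjectivity statement under $\Ext^1_R(M,N)=0$. You instead use the long exact sequence from the start, extracting the short exact sequence
\[
0\to \overline{\Hom_R(M,N)}\to \Hom_R(M,\overline N)\to {}_x\!\Ext^1_R(M,N)\to 0
\]
and then identifying $\Hom_R(M,\overline N)\cong\Hom_{\overline R}(\overline M,\overline N)$ and checking the maps agree. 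Your approach is a bit more uniform and gives the extra information that the cokernel is the $x$-torsion of $\Ext^1_R(M,N)$; the paper's hands-on argument for injectivity makes the map $f\mapsto\overline f$ and its kernel completely explicit without any homological machinery. Your closing remark that only ``$x$ NZD on $N$'' is used substantively is also correct and matches the paper's actual usage.
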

  \begin{proof} Let $\pi_M:M \twoheadrightarrow \overline M$ and $\pi_N:N\twoheadrightarrow \overline N$ be the natural homomorphisms.  Given an $R$-homomorphsim $f:M\to N$, there is a unique $\overline R$-homomorphsim
  $\overline f:\overline M \to \overline{N}$ making the following diagram commute:
  $$
  \begin{CD}
  M      @>f>>    N\\
  @V\pi_MVV @VV\pi_NV\\
  \overline M @>\overline{f}>> \overline N.
  \end{CD}
  $$
 The kernel of the resulting  homomorphism  
 $\Hom_R(M,N) \to \Hom_{\overline{R}}(\overline{M},\overline{N})$
 taking $f$ to $\overline f$  is $K:= \{f\in\Hom_R(M,N) \mid f(M)\subseteq xN\}$.  
 We claim that $K = x\Hom_R(M,N)$.  Obviously $x\Hom_R(M,N) \subseteq K$.
 For the reverse inclusion, suppose $f\in K$.  Then, for each $m\in M$, we have
 $f(m) = xn$ for some $n \in M$.  Moreover, the element $n$ is unique, since $x$ is a NZD
 on $N$.  The correspondence $g:M\to N$ taking $m$ to $n$ is easily seen to be an
 $R$-homomorphism. Since $f = xg$, this proves the claim and provides a natural injection
$\overline{\Hom_R(M,N)} \hookrightarrow \Hom_{\overline{R}}(\overline{M},\overline{N})$.

For the last statement, just apply $\Hom_R(M, -)$ to the short exact sequence
$$
0 \to N \stackrel x\to N \to N/xN \to 0\,.
$$
(See \cite[Proposition 3.3.3]{BH} for the details.)
 \end{proof}

 Corollary~\ref{Gorqfp'} now follows from Proposition~\ref{Gorqfp}.

\bigskip
\begin{corollary} \label{Gorqfp'} Let $R$ be a Gorenstein quasi-fiber product ring, and let $(\underline x)
$ be a regular sequence such that $R/(\underline x)$ is a fiber product ring.
 Let $M$ be a finitely generated $R$-module such that  
\begin{enumerate} 
\item[$(1)$] $\underline x$ is a regular sequence on $M$,
\item[$(2)$]  $M/(\underline x)M$ is torsion-free and has rank as an $(R/(\underline x))$-module,
\item[$(3)$]  $M \otimes M^*$ is MCM as an $R$-module. 
\item[$(4)$]  $\ext^1_R(M,R)=0$.
 \end{enumerate}
Then $M$ is free. 
\end{corollary}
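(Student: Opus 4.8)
The plan is to derive the statement from Proposition~\ref{Gorqfp}. Conditions $(1)$ and $(2)$ of that proposition are exactly conditions $(1)$ and $(2)$ of the present corollary, so the only thing left to check is condition $(3)$ of Proposition~\ref{Gorqfp}: writing $\overline{R}:=R/(\underline{x})$ and $\overline{M}:=M/(\underline{x})M$, one must show that $\overline{M}\otimes_{\overline{R}}\Hom_{\overline{R}}(\overline{M},\overline{R})$ is torsion-free over $\overline{R}$. I would first record the ambient facts. Since $R$ is Gorenstein and $\underline{x}$ is a regular sequence, $\overline{R}$ is a Gorenstein fiber product ring, hence a one-dimensional hypersurface by Fact~\ref{factGor}; in particular $\overline{R}$ is one-dimensional and Cohen--Macaulay, so by Fact~\ref{fact:torsion}(ii) a finitely generated $\overline{R}$-module is torsion-free if and only if it is maximal Cohen--Macaulay.

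Next I would extract torsion-freeness from hypothesis $(3)$, but at first with the ``wrong'' dual. Since $M\otimes_R M^{*}$ is maximal Cohen--Macaulay, this is exactly the situation of Remark~\ref{st2}: tensoring the terms of $M\otimes_R M^{*}$ with $\overline{R}$ preserves maximal Cohen--Macaulayness, so $\overline{M}\otimes_{\overline{R}}\overline{M^{*}}$, where $\overline{M^{*}}:=M^{*}/(\underline{x})M^{*}$, is a maximal Cohen--Macaulay $\overline{R}$-module, and hence torsion-free over $\overline{R}$.

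The crux is then to replace $\overline{M^{*}}$ by $\Hom_{\overline{R}}(\overline{M},\overline{R})$; that is, to show that forming the $R$-dual of $M$ commutes with reduction modulo $\underline{x}$. This is where hypothesis $(4)$ enters, via Lemma~\ref{lem:bleat} with $N=R$: because $x_{1}$ is a non-zerodivisor on $R$ and $\Ext^{1}_{R}(M,R)=0$, the natural injection $M^{*}/x_{1}M^{*}\hookrightarrow\Hom_{R/x_{1}}(M/x_{1}M,R/x_{1})$ of Lemma~\ref{lem:bleat} is an isomorphism. Iterating --- at the $j$-th stage applying Lemma~\ref{lem:bleat} over the ring $R/(x_{1},\dots,x_{j-1})$ to the module $M/(x_{1},\dots,x_{j-1})M$ --- yields the desired isomorphism $\overline{M^{*}}\cong\Hom_{\overline{R}}(\overline{M},\overline{R})$. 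Combining this with the previous paragraph, $\overline{M}\otimes_{\overline{R}}\Hom_{\overline{R}}(\overline{M},\overline{R})$ is torsion-free over $\overline{R}$, so condition $(3)$ of Proposition~\ref{Gorqfp} holds, and Proposition~\ref{Gorqfp} then gives that $M$ is free.

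The main obstacle I anticipate lies in the iteration of Lemma~\ref{lem:bleat} when $\underline{x}$ has length greater than one: each reapplication needs the first Ext module to vanish over the intermediate ring $R/(x_{1},\dots,x_{j-1})$, and a change-of-rings long exact sequence expresses, for instance, $\Ext^{1}_{R/(x_{1})}(M/x_{1}M,R/x_{1})$ not merely in terms of $\Ext^{1}_{R}(M,R)$ but also in terms of the $x_{1}$-torsion of $\Ext^{2}_{R}(M,R)$. One therefore has to argue that the vanishing propagates down $\underline{x}$ --- for instance by using the maximal Cohen--Macaulayness of $M\otimes_R M^{*}$ to force the additional vanishing of $\Ext^{\bullet}_{R}(M,R)$ that is needed. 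When $n=1$ --- the case relevant to the two-dimensional Question~\ref{HWGorqf2d} --- a single application of Lemma~\ref{lem:bleat} suffices, and this difficulty does not arise.
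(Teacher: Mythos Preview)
Your approach is exactly the paper's: use Remark~\ref{st2} to get that $\overline{M}\otimes_{\overline{R}}\overline{M^{*}}$ is MCM (equivalently torsion-free) over $\overline{R}$, invoke Lemma~\ref{lem:bleat} together with hypothesis~(4) to identify $\overline{M^{*}}$ with $\Hom_{\overline{R}}(\overline{M},\overline{R})$, and then apply Proposition~\ref{Gorqfp}. The paper's proof is a single sentence doing precisely this.

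Your final paragraph is well taken: Lemma~\ref{lem:bleat} is stated for a single non-zerodivisor, and iterating it along $\underline{x}=x_1,\dots,x_n$ requires $\Ext^1_{R/(x_1,\dots,x_{j-1})}(M/(x_1,\dots,x_{j-1})M,\,R/(x_1,\dots,x_{j-1}))=0$ at each step, which does not follow from $\Ext^1_R(M,R)=0$ alone. The paper's proof does not address this point either; it simply writes ``condition~(4) implies $\overline{M}^*=\overline{M^*}$ by Lemma~\ref{lem:bleat}''. So you have matched the paper and, in fact, been more scrupulous than it about where the argument is incomplete for $n>1$.
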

\begin{proof} The  new condition (4) implies $\overline{M}^*=\overline{M^*}$ by Lemma~\ref{lem:bleat}, and so by Remark~\ref{st2} and Proposition~\ref{Gorqfp},
the corollary holds. 
\end{proof}

Using  Theorem~\ref{hw-main} and Lemma \ref{freemodregseq}, we obtain the next corollary,
which may be useful for Question~\ref{HWGorqf2d}.

\begin{cor} \label{HWMNhypreg}
Let $(R,\mathfrak{m})$ be a  local ring, let $M$ and $N$ be finitely generated $R$-modules and let 
$\underline x\in \mathfrak{m}$ be a regular sequence on $M$ and $N$ such that 
\begin{enumerate} 
\item [$(1)$] $R/(\underline x)$ is a hypersurface, 
\item [$(2)$] $M/(\underline x)M$ has rank as an $(R/(\underline x))$-module, and
\item [$(3)$] 
$M \otimes_R N$ is MCM as an $R$-module.\end{enumerate}
Then $M$ or $N$ is free. 
\end{cor}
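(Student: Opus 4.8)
The plan is to descend to the hypersurface $\overline R := R/(\underline x)$, apply the Huneke--Wiegand Theorem~\ref{hw-main} there, and then lift freeness back to $R$ using Lemma~\ref{freemodregseq}. I would begin with trivialities: if $M=0$ or $N=0$ then that module is free and there is nothing to prove, so assume both are nonzero; writing $\overline V := V/(\underline x)V$ for an $R$-module $V$, Nakayama's lemma then gives that $\overline M$ and $\overline N$ are nonzero finitely generated $\overline R$-modules and that $\overline M\otimes_{\overline R}\overline N\neq 0$.

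Next I would record the standard base-change isomorphism
$$
\overline M\otimes_{\overline R}\overline N\;\cong\;(M\otimes_R N)/(\underline x)(M\otimes_R N),
$$
coming from associativity of tensor together with the identification $(A\otimes_R R/I)\otimes_{R/I}(B\otimes_R R/I)\cong (A\otimes_R B)\otimes_R R/I$. The one step requiring a little care is checking that this module is maximal Cohen--Macaulay over $\overline R$. Since $\underline x$ is a regular sequence in $R$ of length $n$, the ideal $(\underline x)$ has height $n$ and $\dim\overline R=\dim R-n$. Killing one element drops depth by at most one, so, using hypothesis (3),
$$
\depth_{\overline R}\bigl((M\otimes_R N)/(\underline x)(M\otimes_R N)\bigr)\;\ge\;\depth_R(M\otimes_R N)-n\;=\;\dim R-n\;=\;\dim\overline R,
$$
and since $\depth$ never exceeds $\dim$ for a nonzero finitely generated module, equality holds and $\overline M\otimes_{\overline R}\overline N$ is MCM over $\overline R$. (Alternatively: $R$ is Cohen--Macaulay because $\overline R$ is a hypersurface and $\underline x$ is $R$-regular, hence $\underline x$ is part of a system of parameters and so a regular sequence on the MCM module $M\otimes_R N$; then the quotient is MCM over $\overline R$.)

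Now I would apply Theorem~\ref{hw-main} to the abstract hypersurface $\overline R$ with the modules $\overline M,\overline N$: by hypothesis (2), $\overline M$ has rank over $\overline R$, and $\overline M\otimes_{\overline R}\overline N$ is MCM by the previous step, so the theorem yields that one of $\overline M$, $\overline N$ is free over $\overline R$. Finally, since $\underline x$ is a regular sequence on both $M$ and $N$ by hypothesis, Lemma~\ref{freemodregseq} upgrades freeness of $\overline M$ (resp.\ $\overline N$) over $\overline R$ to freeness of $M$ (resp.\ $N$) over $R$, which is exactly the conclusion.

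In short, all the genuine content is imported from Theorem~\ref{hw-main}, the hypersurface case of the Huneke--Wiegand theorem; the ``main obstacle'' is thus already packaged in that cited result, and the only point of the argument that needs attention is the MCM descent of the tensor product to $\overline R$, which is why I prefer to phrase it through the inequality $\depth(X/(\underline x)X)\ge\depth X-n$ rather than through regularity of $\underline x$ on $M\otimes_R N$.
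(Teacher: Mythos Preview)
Your proof is correct and follows essentially the same route as the paper's: pass to $\overline R=R/(\underline x)$, use the base-change isomorphism to identify $\overline M\otimes_{\overline R}\overline N$ with $(M\otimes_RN)\otimes_R\overline R$, check that this is MCM over $\overline R$, invoke Theorem~\ref{hw-main} to get freeness of $\overline M$ or $\overline N$, and then lift via Lemma~\ref{freemodregseq}. The paper's proof simply asserts the MCM descent in one line, whereas you supply the justification via the depth inequality $\depth(X/(\underline x)X)\ge\depth X-n$ (and, alternatively, via the observation that $R$ is Cohen--Macaulay and $\underline x$ is then regular on the MCM module $M\otimes_RN$); both of your arguments for this step are fine.
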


\begin{proof} By Remark~\ref{hw-main}, $R/(x)$ satisfies (HW$^{2}$).
Now $M \otimes_R N$  a  MCM  $R$-module
implies $$(M/(\underline x)M) \otimes_{R/(\underline x)} (N/(\underline x)N)=(M \otimes_R N)\otimes_R (R/(\underline x))$$ 
is MCM as an $(R/(\underline x))$-module. Therefore $M/(\underline x)M$ or  
 $N/(\underline x)N$ is free  as an $(R/(\underline x))$-module. By Lemma~\ref{freemodregseq}, $M$ or $N$ is free.
 \end{proof}


\begin{thebibliography}{PTW02}

\bibitem{Araya} T. Araya, \emph{The Auslander-Reiten conjecture for Gorenstein rings}, Proc. Amer. Math. Soc. \textbf{137} (2009), no.6, 1941--1944.

\bibitem{acst} T. Araya, O. Celikbas, A. Sadeghi, R. Takahashi, \emph{On the vanishing of self extensions
over Cohen–Macaulay local rings}, Proc.
Amer. Math. Soc. \textbf{146} (2018), 4563--4570.


\bibitem{aus-coll}M. Auslander, {\em Selected works of Maurice Auslander}, I. Reiten, S. O. Smal\o, and \O. Solberg, Editors, Amer. Math. Soc., Collected Works, vol. 10 (1999).

\bibitem{aus0}M. Auslander, \emph{Coherent Functors}, in: Proc. Conf. Categorical Algebra, La Jolla, CA, 1965, Springer, New York,
1966, pp. 189-231.

\bibitem{aus} M. Auslander, \emph{Modules over unramified regular local rings}, Illinois J. Math., \textbf{5}, (1961), 631--647.

\bibitem{AuBr} M. Auslander, M. Bridger, \emph{Stable module theory}, Mem. Amer. Math. Soc, \textbf{94}, American Mathematical Society, Providence, R.I, 1969.

\bibitem{AuGol} M. Auslander, O. Goldman, \emph{Maximal orders}, Trans. Amer. Math. Soc., \textbf{97}(1960), 1-24.


\bibitem{AR} M. Auslander and I. Reiten, \emph{On a generalized version of the Nakayama Conjecture}, Proc. Amer. Math. Soc. \textbf{32} (1975), 69--74.

\bibitem{AvBu} L. L. Avramov and R.-O. Buchweitz,\emph{ Support varieties and cohomology over complete intersections},
Invent. Math. \textbf{142} (2000), 285--318.

\bibitem{AGP} L. L. Avramov, V.  N. Gasharov, I.V. Peeva, Complete intersection dimension, Publ. Math. I.H.E.S. \textbf{86}(1997), 67-114.

\bibitem{AINS} L. L. Avramov, S. B. Iyengar, S. Nasseh and S. Sather-Wagstaff, \emph{Persistence of homology over commutative noetherian rings}, (2020), https://arxiv.org/pdf/2005.10808.pdf.



\bibitem{B} H. Bass, \emph{On the ubiquity of Gorenstein rings}, Math. Zeitschr. \textbf{82} (1963), 8-28.



\bibitem{BH} W. Bruns and J. Herzog, \emph{Cohen-Macaulay rings}, Cambridge Stud. Adv.
Math. \textbf{39}, Cambridge Univ. Press, Cambridge, 1993.




\bibitem{Ce0}O. Celikbas, \emph{Vanishing of Tor over complete intersections}, J. Commut. Algebra \textbf{3} (2011), 169--206.

\bibitem{CD} O. Celikbas and H. Dao, \emph{Necessary conditions for the depth formula over Cohen-Macaulay
local rings}, J. Pure Appl. Algebra \textbf{218} (2014), no. 3, 522--530.

\bibitem{CeT} O. Celikbas and R. Takahashi,\emph{ Auslander-Reiten conjecture and Auslander-Reiten duality}, J. Algebra
\textbf{382} (2013), 100--114.

\bibitem{Ce1}O. Celikbas, S. Goto, R. Takahashi, and N. Taniguchi,
\emph{On the ideal case of a 
 conjecture of Huneke and
Wiegand}, Proc. Edinburgh Math. Soc. \textbf{62} (2019), 847--859.

\bibitem{CRW} O. Celikbas and R. Wiegand, \emph{Vanishing of Tor, and why we care about it}, J. Pure Appl. Algebra \textbf{219} (2015), no. 3, 429--448. 

\bibitem{CH}  L.W. Christensen and H. Holm, \emph{Algebras that satisfy Auslander's condition on the vanishing of cohomology}, Math. Z. \textbf{265} (1)
(2010), 21--40.

\bibitem{CH-Van} L. W. Christensen and H. Holm, \emph{Vanishing of cohomology over Cohen-Macaulay rings}, Manuscripta Math. \textbf{139} (2012), 535--544.

\bibitem{CSV} L.W. Christensen, J. Striuli, O. Veliche, \emph{Growth in the minimal injective resolution of a local ring}, J. Lond. Math. Soc. (2) \textbf{81} (1) (2010), 24--44.

\bibitem{DEL} H. Dao, M. Eghbali, and J. Lyle, \emph{Hom and Ext, revisited}, J. Algebra \textbf{571} (2021), 75--93.


\bibitem{diveris}  K. Diveris, \emph{Finitistic extension degree}, Algebr. Represent. Theory \textbf{17} (2014), no. 2, 495--506.

\bibitem{EGI} N. Endo, S. Goto, and R. Isobe, \emph{Almost Gorenstein rings arising from fiber products}, Canad. Math. Bull.  \textbf{64}(2) (2021), 383--400.

\bibitem{TVRS} T. H. Freitas, V. H. Jorge P\'erez, R. Wiegand, and S. Wiegand, \emph{Vanishing of Tor over fiber products}, Proc. Amer. Math. Soc. \textbf{149},
 (2021) 1817--1825.       


\bibitem{P}P. A. Garc\'ia-Sanchez, M. J. Leamer, \emph{Huneke-Wiegand conjecture for complete intersection numerical
semigroup rings}, J. Algebra \textbf{391} (2013), 114--124.

\bibitem{GT}  S. Goto, R. Takahashi,\emph{ On the Auslander-Reiten conjecture for Cohen-Macaulay local rings}, Proc.
Amer. Math. Soc. \textbf{145} (2017), no. 8, 3289--3296.

\bibitem{GTTT}S. Goto, R. Takahashi, N. Taniguchi, H. Le Truong, \emph{Huneke-Wiegand conjecture and change of rings},
J. Algebra \textbf{422} (2015), 33--52.

\bibitem{HJ} C. Huneke, D. A.  Jorgensen,\emph{ Symmetry in the vanishing of $\Ext$ over Gorenstein rings}, Math. Scand. \textbf{93} (2003),
161--184.

\bibitem{HIW}  C. Huneke, S. B. Iyengar, and R. Wiegand, \emph{Rigid ideals in Gorenstein rings of dimension one},
Acta Math. Vietnam. \textbf{44} (2019), no. 1, 31--49.

\bibitem{HL} C. Huneke and G. J. Leuschke, \emph{On a conjecture of Auslander and Reiten}, J. Algebra \textbf{275}(2) (2004) 781--790.

\bibitem{HW} C. Huneke, R. Wiegand, \emph{Tensor products of modules and the rigidity of Tor}, Math. Ann.  \textbf{229} (1994),
449--476.

\bibitem{HWcorrection} C. Huneke, R. Wiegand, \emph{Correction to “Tensor products of modules and the rigidity of Tor}, Math.
Ann., \textbf{299} (1994), 449--476, Math. Ann. \textbf{338} (2007), no. 2, 291--293.

\bibitem{JD} P. Jothilingam and T. Duraivel, \emph{Test Modules for Projectivity of Duals}, Comm. Alg. \textbf{38}(8)
(2010), 2762--2767.

\bibitem{Jor}  D. A. Jorgensen, \emph{Finite projective dimension and the vanishing of $\Ext_R(M, M)$}, Comm.
Algebra \textbf{36} (2008), 4461--4471.

\bibitem{JS} D. A. Jorgensen and L. \c Sega, \emph{Nonvanishing cohomology and classes of Gorenstein rings},
Adv. Math. \textbf{188} (2004), 470--490.

\bibitem{L81} J. Lescot,  \emph{La s\'erie de Bass d'un produit fibr\'e d'anneaux locaux}, C. R. Acad. Sci. Paris  \textbf{293} (1981), 569--571.


\bibitem{LW} G. L. Leuschke and R. Wiegand, Cohen-Macaulay Representations. Mathematical Surveys and Monographs \textbf{181}, American Mathematical Society 2012.

\bibitem{Mat}  H. Matsumura, \emph{Commutative ring theory}, Translated from the Japanese by M. Reid, Second edition, Cambridge Studies in Advanced Mathematics, 8, Cambridge University Press,
Cambridge, 1989.


\bibitem{NS} S. Nasseh,  S. Sather-Wagstaff, \emph{Vanishing of $\Ext$ and $\Tor$ over fiber products}, Proc. Amer. Math. Soc. \textbf{145} (2017), no. 11, 4661--4674.

\bibitem{NSTV} S. Nasseh, S. Sather-Wagstaff, R. Takahashi, and K. VandeBogert, \emph{Applications and homological properties of local rings with decomposable maximal ideals}, J. Pure Appl. Algebra \textbf{223}
(2019), no. 3, 1272--1287.

\bibitem{NT} S. Nasseh and R. Takahashi, \emph{Local rings with quasi-decomposable maximal ideal},
Math. Proc. Camb Phil. Soc. \textbf{168} (2020), 305--322.

\bibitem{NTV} S. Nasseh, R. Takahashi, and K. VandeBogert, \emph{On Gorenstein fiber products and applications}, (2017), https://arxiv.org/pdf/1701.08689.pdf.

\bibitem{Og} T. Ogoma, \emph{Existence of dualizing complexes}, J. Math. Kyoto Univ. \textbf{24} (1984), 27--48.


\bibitem{Rot} J. Rotman, \emph{An Introduction to Homological Algebra}, Academic Press, New York, 1979.

\bibitem{T} R. Takahashi, \emph{Direct summands of syzygy modules of the residue class field}, Nagoya Math.
J. \textbf{189} (2008), 1--25.

\bibitem{Yoshino} Y. Yoshino, \emph{Cohen-Macaulay modules over Cohen-Macaulay rings,} London Math. Soc. Lecture Note Ser. \textbf{146}, Cambridge Univ. Press, Cambridge, 1990.

\bibitem{CZGS}  M. R. Zargar, O. Celikbas, M. Gheibi, and A. Sadeghi, \emph{Homological dimensions of rigid modules}, Kyoto J. Math. \textbf{58} (2018), 639--669.


\end{thebibliography}
\end{document}